\documentclass[10pt]{amsart} 

\usepackage[utf8]{inputenc}
\usepackage[T1]{fontenc}	
\usepackage[french,english]{babel}	



\usepackage{lmodern}			

\usepackage{graphicx}	

\usepackage[a4paper,plainpages=false,colorlinks,linkcolor=bleuFonce,citecolor=rougeFonce,urlcolor=vertFonce,breaklinks]{hyperref}

\usepackage{placeins}
\usepackage{float} 


\usepackage{color}
\definecolor{vertFonce}{rgb}{0,0.5,0}
\definecolor{numLignes}{rgb}{0.17,0.57,0.7}	
\definecolor{gris}{rgb}{0.5,0.5,0.5}
\definecolor{grisFonce}{rgb}{0.2,0.2,0.2}
\definecolor{orange}{rgb}{1,0.65,0.31}		
\definecolor{orangeFonce}{rgb}{1,0.4,0}
\definecolor{bleuFonce}{rgb}{0,0,0.4}
\definecolor{rougeFonce}{rgb}{0.3,0,0}
\definecolor{rougeWord}{rgb}{0.5,0,0}
\definecolor{vertClair}{rgb}{0.8,1,0.8}
\definecolor{rougeClair}{rgb}{1,0.5,0.5}


\usepackage{pict2e}
\setlength{\unitlength}{4pt}

\usepackage{multido}


\usepackage{amsfonts,amssymb,amsthm,amsmath} 
\usepackage{dsfont}				
\usepackage{mathrsfs}
\usepackage{yfonts}
 
\newtheorem{lem}{Lemma}[section]
\newtheorem{theorem}{Theorem}
\newtheorem{cor}{Corollary}[section]
\newtheorem{prop}{Proposition}[section]

\newtheorem{remark}{Remark}[section]

	

\newenvironment{demo}[1][]{%
	\begin{proof}[\textbf{Proof#1}]
	}{%
	\end{proof}
}

\newenvironment{thm}[1][]{%
	\color{rougeWord}\begin{theorem}[#1]
	}{%
	\end{theorem}
}

\newcommand{\step}[1]	{\paragraph{\itshape\bfseries Step #1.}}




%
\usepackage{stmaryrd}
\newcommand		{\subsetArrow}	{\mathrel{\ooalign{$\subset$\cr%
\hidewidth\raise-.087ex\hbox{$_\shortrightarrow\mkern-1.5mu$}\cr}}}
\newcommand		{\subsetarrow}	{\mathrel{\ooalign{$\subset$\cr%
\hidewidth\raise-1.45ex\hbox{$\vec{}\mkern6mu$}\cr}}}

\newcommand		{\N}			{\mathbb N}			
\newcommand		{\R}			{\mathbb R}			
\renewcommand	{\L}			{\mathcal L}		
\newcommand		{\B}			{\mathscr B}		
\renewcommand	{\P}			{\mathcal P}		
\newcommand		{\PP}			{\mathscr P}		
\newcommand		{\C}			{\mathcal C}		%

\newcommand		{\fb}			{\mathfrak b}
\newcommand		{\cZ}			{\mathcal Z}

\newcommand		{\lt}			{\left}				%
\newcommand		{\rt}			{\right}			%
\renewcommand	{\(}			{\lt(}
\renewcommand	{\)}			{\rt)}
\newcommand		{\lal}			{\langle}			%
\newcommand		{\ral}			{\rangle}			%
\newcommand		{\bangle}[1]	{\lt\lal #1\rt\ral}
\newcommand		{\weight}[1]	{\lt\lal #1\rt\ral}	

\newcommand		{\com}[1]		{\lt[{#1}\rt]}		
\newcommand		{\il}			{[\![}				
\newcommand		{\ir}			{]\!]}				
\newcommand		{\Int}[1]		{\il #1 \ir}
\newcommand		{\n}[1]			{\lt|{#1}\rt|}

\newcommand		{\Nrm}[2]		{\lt\|{#1}\rt\|_{#2}}

\newcommand		{\ii}			{\mathrm{i}}	
\newcommand		{\init}			{\mathrm{in}}
\newcommand		{\loc}			{\mathrm{loc}}

\renewcommand	{\d}			{\mathrm{d}}	
\newcommand		{\dd}			{\,\d}			
\newcommand		{\ddt}[1]		{\frac{\d #1}{\d t}}	

\DeclareMathOperator{\sign}		{sign}
\DeclareMathOperator*{\supess}	{sup\,ess}
\DeclareMathOperator{\diag}		{diag}
\DeclareMathOperator{\tr}		{Tr}			

\newcommand		{\Tr}[1]		{\tr\!\( #1 \)} 	
\newcommand		{\Diag}[1]		{\diag\!\( #1 \)}

\newcommand		{\intd}			{\int_{\R^d}}
\newcommand		{\intdd}		{\int_{\R^{2d}}}
\newcommand		{\iintd}		{\iint_{\R^{2d}}}
\newcommand		{\sumj}			{\sum_{j\in J}}
\newcommand		{\eps}			{\varepsilon}
\newcommand		{\Eps}			{\mathcal{E}}

\usepackage{upgreek}
\renewcommand	{\r}		{\varrho}			
\newcommand		{\op}		{\boldsymbol{\rho}}	
\newcommand		{\opz}		{\mathbf{z}}	
\newcommand		{\gam}		{\boldsymbol{\gamma}}

\newcommand		{\bra}[1]	{\langle #1 |}
\newcommand		{\ket}[1]	{| #1 \rangle}
\newcommand		{\Wh}		{W_{2,\hbar}}		

\newcommand		{\opp}		{\boldsymbol{p}}

\newcommand		{\ch}		{\mathbf{c}_\hbar}
\newcommand		{\Eh}		{\mathcal{E}_\hbar}
\newcommand		{\fh}		{f_\hbar}
\newcommand		{\fht}		{\tilde{\fh}}

\newcommand		{\T}		{\mathsf T}
\newcommand		{\Tx}		{\tilde{\T}}
\newcommand		{\sfS}		{\mathsf{S}}
\newcommand		{\Sx}		{\tilde{\sfS}}


\title[Global Limit from Hartree to Vlasov Equation]{Global Semiclassical Limit from Hartree to Vlasov Equation for Concentrated Initial Data}
\author{Laurent Lafleche$^{1,2}$}
\thanks{$^1$CEREMADE, UMR CNRS 7534, Université Paris-Dauphine, PSL Research University, Place du Maréchal de Lattre de Tassigny, 75775 Paris cedex 16 France, {\tt lafleche@ceremade.dauphine.fr}}
\thanks{$^2$CMLS, \'Ecole polytechnique, CNRS, Universit\'e Paris-Saclay, 91128 Palaiseau cedex, France}
\date{\today}
\subjclass[2010]{82C10, 35Q41, 35Q55 (82C05,35Q83).}
\keywords{Hartree equation, Nonlinear Schrödinger equation, Vlasov equation, Coulomb interaction, gravitational interaction, semiclassical limit.}


\begin{document}

\begin{abstract}We prove a quantitative and \textbf{global in time} semiclassical limit from the Hartree to the Vlasov equation in the case of a singular interaction potential in dimension $d\geq 3$, including the case of a Coulomb singularity in dimension $d=3$. This result holds for initial data concentrated enough in the sense that some space moments are initially sufficiently small. As an intermediate result, we also obtain quantitative bounds on the space and velocity moments of even order and the asymptotic behavior of the spatial density due to dispersion effects, uniform in the Planck constant $\hbar$.
\end{abstract}

\maketitle

\bigskip

\renewcommand{\contentsname}{\centerline{Table of Contents}}
\setcounter{tocdepth}{2}	
\tableofcontents


\bigskip
\section{Introduction}
\label{sec:intro}

	The equation governing the dynamics of a large number of interacting particles of density $f = f(t,x,\xi)$ in the phase space is the Vlasov equation
	\begin{equation}\label{eq:Vlasov}\tag{Vlasov}
		\partial_t f +\xi\cdot\nabla_x f + E\cdot\nabla_\xi f = 0,
	\end{equation}
	where $E = -\nabla V$ is the force field corresponding to the mean field potential $V= V(x)$ given by
	\begin{equation*}
		V = K * \rho_f = \intd K(x-y)\,\rho_f(y)\dd y,
	\end{equation*}
	where we denote by $\rho_f(x) := \intd f(x,v)\dd v$ the spatial density and by $K(x)$ the pair interaction potential between two particles at distance $\n{x}$.
	
	The counterpart of the \ref{eq:Vlasov} equation in quantum mechanics is the Hartree equation
	\begin{equation}\label{eq:Hartree}\tag{Hartree}
		i\hbar\,\partial_t \op = [H,\op],
	\end{equation}
	where $\op$ is a self-adjoint Hilbert-Schmidt operator called the density operator and the Hamiltonian is defined by
	\begin{equation*}
		H = -\frac{h^2}{2} \Delta + V.
	\end{equation*}
	In this formula, the potential is defined by $V = K*\rho$ where the spatial density $\rho$ is defined as the diagonal of the kernel $\r(x,y)$ of the operator $\op$, i.e. $\rho(x) = \r(x,x)$.
	
	In this paper, we study in a quantitative way the limit when $\hbar\to 0$ of the \ref{eq:Hartree} equation which is known to converge to the \ref{eq:Vlasov}~equation. The question of the derivation of this equation from the quantum mechanics is a very active topic of research. Non-constructive results in weak topologies have indeed already been proved, including the case of Coulomb interactions, starting from the work of Lions and Paul \cite{lions_sur_1993} and Markowich and Mauser \cite{markowich_classical_1993}. See also \cite{gerard_homogenization_1997, gasser_semiclassical_1998, graffi_mean-field_2003, ambrosio_passage_2010, ambrosio_semiclassical_2011}.
	
	Some more precise quantitative results have also more recently been proved for smooth forces which are always at least Lipschitz in \cite{athanassoulis_strong_2011, amour_classical_2013, amour_semiclassical_2013, benedikter_hartree_2016, golse_mean_2016}. In \cite{golse_schrodinger_2017}, Golse and Paul introduce a pseudo-distance on the model of the Wasserstein-(Monge-Kantorovitch) between classical phase space densities and quantum density operators to get a rate of convergence for the semiclassical limit for Lipschitz forces. This strategy has been used in the recent paper~\cite{lafleche_propagation_2019} of the present author to extend this result to more singular interactions, but only up to a fixed time in the case of potentials with a strong singularity such as the Coulomb interaction.
	
	We also mention the work of Porta et al \cite{porta_mean_2017} and Saffirio \cite{saffirio_mean-field_2018} about the question of the mean-field limit for the Schrödinger equation to the \ref{eq:Hartree} equation for Fermions since this limit is coupled with a semiclassical limit. Results are proved for the Coulomb interaction under assumptions of propagation of regularity along the Hartree dynamics which is still an open problem. Other results about the mean-field limit can be found in \cite{bardos_weak_2000, erdos_derivation_2001, bardos_derivation_2002} where non-quantitative results are established for the Coulomb potential, and more precise limits can be found in \cite{rodnianski_quantum_2009, pickl_simple_2011, golse_mean_2016, mitrouskas_bogoliubov_2019, golse_schrodinger_2017, golse_empirical_2019, golse_derivation_2018} for Bosons and in \cite{frohlich_mean-field_2009, frohlich_microscopic_2011, benedikter_mean-field_2014, benedikter_mean-field_2016, bach_kinetic_2016, petrat_new_2016, porta_mean_2017, petrat_hartree_2017} for Fermions.
	
	Here, we extends the results of \cite{lafleche_propagation_2019} by proving a global in time semiclassical limit under a smallness condition of space moments. We first prove a global in time bound on some modified space moments, from which we obtain the propagation of space and velocity moments. The same kind of results were already known for $\hbar=1$ (see Remark~\ref{rem:existence}), and the main novelty is the fact that the bounds we prove are uniform in $\hbar$. The bound on the velocity is then sufficient to use the theory already used in the above mentioned paper to get a global $L^\infty$ bound on the spatial density and the quantitative semiclassical limit in the quantum Wasserstein pseudo-distance.
	
	The fact that the time decay due to the dispersion properties gives global estimates for the \ref{eq:Vlasov}~equation was already used in \cite{bardos_global_1985}. The modified space moments of order $2$ are linked to a Lyapunov functional reminiscent of the conservation of energy, see \cite{perthame_time_1996, dolbeault_time-dependent_2001}. The propagation of modified space moments was investigated further in \cite{castella_propagation_1999, pallard_moment_2012, pallard_space_2014}.

\subsection{Main results}

	We adopt the same notational conventions as in \cite{lafleche_propagation_2019}. In particular, $L^{p,\infty}$ denotes the weak Lebesgue spaces of functions on $\R^d$ and we define the quantum version of the phase space Lebesgue and weighted Lebesgue spaces as
	\begin{align*}
		\L^p &:= \lt\{\op\in\B(L^2), \Nrm{\op}{\L^p} := h^{-d/p'} \tr(|\op|^p)^\frac{1}{p} < \infty \rt\}
		\\
		\L^p_+ &:= \lt\{\op\in\L^p, \op = \op^* \geq 0\rt\}
		\\
		\L^p(m) &:= \lt\{\op\in\L^p, \op\,m \in \L^p\rt\},
	\end{align*}
	where $\B(L^2)$ denotes the set of bounded linear operators on $L^2$ and $\tr$ denotes the trace. We also define the quantum probability measures by
	\begin{align*}
		\PP &:= \lt\{\op\in\L^1_+, \tr(\op) = 1 \rt\}.
	\end{align*}
	Moreover, in order to ensure well-posedness

	We will denote by $\opp := -i\hbar\nabla$ the quantum impulsion, which is an unbounded operator on $L^2$, and by $M_0$ the common total mass of the densities in both the quantum and the classical setting
	\begin{align*}
		M_0 := \Tr{\op} = \iintd f(t,x,v)\dd x\dd v.
	\end{align*}
	
	Our first result states that if the spatial density is concentrated enough, then the \textit{Eulerian moments} $\tr(\n{x-t\opp}^n\op)$ are bounded globally in time.

	\begin{thm}\label{th:propag_L}
		Let $d\geq 3$, $n\in 2\N\setminus\{0\}$, $r\in[1,\infty]$ and define $\fb_n := \frac{nr'+d}{n+1}$. Assume 
		\begin{equation}\label{eq:cond_K_1}
			\nabla K\in L^{\fb,\infty} \text{ with } \fb\in\(\max\!\(\tfrac{d}{3},\fb_4,\fb_n\)\!,\tfrac{d}{2}\),
		\end{equation}
		and let $\op$ be a solution of the \ref{eq:Hartree} equation with initial condition
		\begin{equation*}
			\op^\init\in\L^r\cap\L^1_+(1+\n{x}^n+\n{\opp}^n).
		\end{equation*}
		Then there exists an explicit constant $\C>0$ depending on $M_0$, $\tr(\n{\opp}^n\op^\init)$, $\Nrm{\nabla K}{L^{\fb,\infty}}$, $\Nrm{\op^\init}{\L^r}$ and not on $\hbar$, such that if
		\begin{equation}\label{eq:small_moments}
			\tr(\n{x}^n\op^\init) < \C,
		\end{equation}
		then
		\begin{equation*}
			\tr(\n{x-t\opp}^n\op) \in L^\infty(\R_+),
		\end{equation*}
		uniformly in $\hbar$.
	\end{thm}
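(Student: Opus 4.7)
The plan is to adapt to the Hartree setting the classical strategy of Bardos--Degond, Castella and Perthame for dispersion in the Vlasov equation. Introduce the free-transport pulled-back position operator
$$X_t := x - t\,\opp,$$
which is self-adjoint, satisfies $[\opp, X_t] = [\opp, x] = -\ii\hbar$ (scalar), and is the quantum analog of the classical trajectory $x - tv$. The quantity to control is then $M_n(t) := \Tr(|X_t|^n \op(t))$, the quantum lift of $\intdd |x-tv|^n f \d x\d v$.

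The first step is to differentiate $M_n$. Since $n$ is even, $|X_t|^n = (|X_t|^2)^{n/2}$ with $|X_t|^2 = \sum_i X_{t,i}^2$ self-adjoint. Using the Hartree equation together with the explicit time dependence $\partial_t X_t = -\opp$ and the identity $\frac{1}{\ii\hbar}[X_t,H] = \opp + t\,\nabla V$ (which follows from $[x,H_0] = \ii\hbar\,\opp$ and $[\opp, V] = -\ii\hbar\,\nabla V$), the free-transport contribution $-\opp$ cancels term by term with the $\opp$ coming from $[X_t,H_0]/\ii\hbar$, leaving only the force term. Schematically,
$$\ddt{M_n}(t) = t\,\frac{n}{2}\sum_{k=0}^{n/2-1}\Tr\!\lt(|X_t|^{2k}\,\{X_t\cdot\nabla V\}_{\text{sym}}\,|X_t|^{n-2-2k}\,\op\rt),$$
so that the factor $t$ responsible for the classical dispersion appears explicitly.

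Next, bound the right-hand side via a quantum Hölder inequality of the form (following the strategy in \cite{lafleche_propagation_2018})
$$\lt|\ddt{M_n}\rt| \leq C\,n\,t\,\|\nabla V\|_{L^n(\rho)}\,M_n^{(n-1)/n},$$
where $\|g\|_{L^n(\rho)}^n := \intd |g|^n\rho$. The assumption $\nabla K\in L^{\bb,\infty}$ with $\bb<d/2$ gives, via Young's inequality in Lorentz spaces, $\|\nabla V\|_{L^q} \leq C\|\nabla K\|_{L^{\bb,\infty}}\|\rho\|_{L^{s}}$, and one more Hölder reduces the estimate on $\|\nabla V\|_{L^n(\rho)}$ to $L^p$-type bounds on $\rho$. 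The crux is then the following quantum dispersion inequality: conjugating by the free propagator $U_0(t) := e^{-\ii t H_0/\hbar}$, the operator $\tilde\op(t) := U_0(t)^*\op(t)U_0(t)$ satisfies $\Tr(|x|^n \tilde\op(t)) = M_n(t)$, so that $M_n$ is literally a space moment of $\tilde\op$. The quantum Lions--Paul interpolation applied to $\tilde\op$ gives $\|\tilde\rho(t)\|_{L^p} \leq C\,M_n^{\theta}\,\|\op\|_{\L^r}^{1-\theta}$ with $\theta\in(0,1)$, and the Jacobian of the free-transport change of variable translates this into a bound on $\|\rho(t)\|_{L^p}$ with the crucial decay factor $t^{-d(1-1/p)}$.

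Combining everything yields a closed inequality of the form
$$\ddt{M_n}(t) \leq C\,t^{\,1-\kappa}\,M_n^{\lambda}\,\Phi,$$
where $\Phi$ depends only on $\|\op\|_{\L^r}$, $\Tr(\op) = M_0$ and $\Tr(|\opp|^n\op^\init)$; the exponent arithmetic forced by $\bb\in(\max(d/3,\bb_4,\bb_n),d/2)$ makes $\kappa$ large enough for $t^{1-\kappa}$ to be integrable at $+\infty$ and simultaneously makes $\lambda>1$. A standard continuity argument then closes the estimate: if $M_n(0) = \Tr(|x|^n\op^\init) < \C$ for $\C$ depending only on the constants in $\Phi$, then $M_n(t)$ cannot escape a fixed neighborhood of $M_n(0)$ on $[0,\infty)$. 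The main obstacle is the quantum dispersion step: unlike the classical case, $\tilde\op$ and $\op$ have the same spectrum but genuinely different spatial densities, so the identity $\Tr(|x|^n\tilde\op) = \Tr(|X_t|^n\op)$ has to be exploited through the algebra of $U_0(t)$ rather than a pointwise change of variables, and the sharp $t^{-d(1-1/p)}$ prefactor must be recovered operator-theoretically. The second delicate point is the tight exponent matching that forces the simultaneous bounds $\bb>d/3$, $\bb>\bb_4$, $\bb>\bb_n$, $\bb<d/2$; these are precisely what dictates the smallness threshold $\C$ on the initial space moment.
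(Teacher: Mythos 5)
Your overall plan is aligned with the paper's (differentiate the pulled-back moment, use dispersion to get decay, close with a superlinear differential inequality and a smallness threshold), but there are two genuine gaps, and you actually flag the first of them yourself without resolving it.

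First, the dispersion step. You conjugate by the free propagator $U_0(t)$ and observe that $L_n := \Tr(|x-t\opp|^n\op)$ equals the \emph{space} moment $\Tr(|x|^n\,U_0(t)^*\op\,U_0(t))$. But the Lieb--Thirring/kinetic interpolation controls $\|\rho\|_{L^p}$ by \emph{velocity} moments $\Tr(|\opp|^n\cdot)$, not space moments, and $\diag(U_0(t)^*\op\,U_0(t))\neq\rho$, so there is no "Jacobian of the change of variable" to invoke. The paper's trick is different: writing $L_n = t^n\Tr(|\opp - x/t|^n\op) = t^n\Tr(|\opp|^n\,\Sx_{-1/t}\op)$, where $\Sx_{s}$ is conjugation by the \emph{multiplication} operator $e^{-\pi|x|^2 s/(ih)}$. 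This is a unitary multiplication operator, so $\Sx_{-1/t}\op$ has the \emph{same} spatial density $\rho$ as $\op$, and the velocity-moment interpolation applied to $\Sx_{-1/t}\op$ yields $\|\rho\|_{L^p}\leq t^{-d/p'}L_n^{1-\theta}\|\op\|_{\L^r}^{\theta}$ directly. Calling the difficulty "operator-theoretic" is accurate, but this particular conjugation is the idea you were missing.

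Second, the closure. The large-time inequality has the schematic form $|L_n'|\leq C\,t^{-a}L_n^{1+a/n}$ with $a=\frac{d}{\bb}-1>1$ (since $\bb<\frac{d}{2}$). That is not integrable near $t=0$, so a continuity argument starting from $t=0$ does not close: one needs a separate \emph{short-time} estimate. The paper proves (via a "quasi-convexity" lemma expanding $|x-t\opp|^{2n}$ and controlling the cross terms by Hölder/Young and the commutator $[x_j,\opp_j]=\ii\hbar$) that $L_n(t)\leq 2^n N_n^\init + C_T(m+\hbar)t^{n/2}$ on $[0,T]$, using the local-in-time moment bounds already available from \cite{lafleche_propagation_2018}. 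The two estimates are then matched at a time $\tau_0\in(0,T]$, and the smallness of $N_n^\init$ is exactly what guarantees that the quantity $L_n(\tau_0)^{-a/n}-\frac{1}{A\tau_0^{a-1}}$ stays positive. This is also where the constraints $\bb>\frac{d}{3}$ (ensuring $a<2$, hence $a'>2$, which makes the matching time $\tau_0$ well defined) and $\bb>\bb_4$ (needed for the local propagation theorem) actually enter; your proposal never uses them, which is a sign the short-time half of the argument is missing.
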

	
	\begin{remark}\label{rem:ex}
		The theorem applies in particular in the case of interaction kernels $K$ with a singularity like the Coulomb interaction. For example for any $\eps>0$
		\begin{equation*}
			K(x) = \frac{\pm 1}{\n{x}} \mathds{1}_{\n{x}\leq 1} + \frac{\pm 1}{\n{x}^{1+\eps}} \mathds{1}_{\n{x}>1}.
		\end{equation*}
		An other interesting case of application is the case of the Yukawa potential that is commonly used as an approximation in the case when there are particles with positive and negative charge, and which is of the form
		\begin{equation*} 
				K(x) = \frac{e^{-\n{x}/\lambda_D}}{\n{x}},
		\end{equation*}
		where $\lambda_D>0$ is the Debye length, which represents the characteristic size of the interaction.
	\end{remark}
	
	\begin{remark}
		An other good example of potentials verifying the assumptions of the theorem are potentials of the form
		\begin{equation}\label{eq:K_homogene}
			K(x) = \frac{\pm 1}{\n{x}^a},
		\end{equation}
		with $a = \frac{d}{\fb} - 1 \in (1, \frac{8}{7})$ when $d=3$. In dimension $d=4$, $d=5$ and $d\geq 6$, one can even better take respectively $a\in (1, \frac{3}{2})$, $a\in (1, \frac{16}{9})$ and $a\in (1, 2)$. Of course, regular potentials also enter the scope of this first theorem as long as they decay sufficiently at infinity, so that we can take for example
		\begin{equation*}
			K(x) = \frac{\pm 1}{1+\n{x}^a},
		\end{equation*}
		for any $a > 1$.
	\end{remark}

	\begin{remark}
		Since $\op\in \L^1_+$, it is an Hilbert-Schmidt operator that can be written as a integral operator of kernel $\r(x,y)$ and it can also be diagonalized by the spectral theorem. Hence, we can write for any $\varphi\in L^2$
		\begin{equation}\label{eq:diagonalization}
			\op\,\varphi(x) = \intd \r(x,y)\,\varphi(y)\dd y = \sumj \lambda_j\ket{\psi_j}\bra{\psi_j}\varphi\rangle,
		\end{equation}
		where $(\psi_j)_{j\in J}\in (L^2)^J$ with $J\subset \N$ is an orthogonal basis. The space density can then be written
		\begin{equation*}
			\rho(x) := \r(x,x) = \sumj \lambda_j \n{\psi_j(x)}^2,
		\end{equation*}
		and the space moments 
		\begin{align*}
			\Tr{\n{x}^n\op} &= \intd \rho(x) \n{x}^n\d x.
		\end{align*}
		For even integers $n\in2\N$, the velocity moments can be written
		\begin{align}\label{def_moments}
			\Tr{\n{\opp}^{n}\op} &= \hbar^n \sumj \lambda_j \intd \n{\nabla^\frac{n}{2}\psi_j}^2.
		\end{align}
	\end{remark}
	
	\begin{remark}\label{rem:existence}
		Notice that the existence theory for both \ref{eq:Hartree} and \ref{eq:Vlasov}~equations is already quite well understood, see for example \cite{ginibre_class_1980, ginibre_global_1985, hayashi_smoothing_1989, brezzi_three-dimensional_1991, illner_global_1994, castella_l2_1997} for the \ref{eq:Hartree}~equation and \cite{lions_propagation_1991,schaeffer_global_1991,pfaffelmoser_global_1992} for the \ref{eq:Vlasov}~equation. For more singular potentials than the Coulomb potential, remark that by the real interpolation definition of Lorentz spaces, our hypothesis \eqref{eq:cond_K_1} on $\nabla K$ implies
		\begin{align*}
			\nabla K \in L^p + L^q \text{ for any } (p,q) \text{ such that } 0 < p < \fb < q < \infty,
		\end{align*}
		where one can take $p =\frac{2d+8}{d+8}$ since $\fb > \fb_4$. Moreover, by Sobolev embeddings, since $\fb > \frac{d}{3}$, 
		\begin{align*}
			K \in L^{r_0} + L^\infty \text{ with } r_0 > \tfrac{d}{2},
		\end{align*}
		and also with $r_0 > \tfrac{d+4}{4}$ when $d\geq 3$, since $\fb>\fb_4$. Therefore, our assumptions implies hypotheses (90) and (91) in \cite{lions_sur_1993} and so the existence of solutions for both equations. Remark that, as in our previous paper \cite{lafleche_propagation_2019}, we are not trying to prove here the propagation of regularity for $\psi$. In particular the global in time propagation for $\hbar=1$ of the multi-Sobolev norms defined by Formula~\eqref{def_moments} is proved in \cite[Appendix]{castella_l2_1997} in the case of the Coulomb potential, where they are denoted by $\Nrm{\Psi}{H^n(\lambda)}$ for $\Psi = \(\psi_j\)_{j\in J}$. The same analysis can be performed for our class of potentials. Thus, since we assume initially bounded velocity moments of order $n\geq 2$, this implies that our solutions will always satisfy $\Psi\in L^\infty_\loc((0,T),H^n(\lambda))$ but with a bound a priori not uniform in $\hbar$. Hence, the difficulty lies in the fact to obtain $\hbar$ independent bounds, which prevent for example to estimate separately each part of the commutator appearing in \ref{eq:Hartree} equation.
	\end{remark}
	
	We can state the analogue of this theorem for solutions of the \ref{eq:Vlasov}~equation
	\begin{prop}\label{prop:classic_propag_L}
		Let $d\geq 3$, $n\in 2\N$, $r\in[1,\infty]$ and assume $\nabla K$ verifies Condition~\eqref{eq:cond_K_1}. Let $f$ be a solution of the \ref{eq:Vlasov}~equation with nonnegative initial condition
		\begin{equation*}
			f^\init\in L^r_{x,\xi}\cap L^1(1+\n{x}^n+\n{\xi}^n).
		\end{equation*}
		Then there exists an explicit constant $\C > 0$ such that if
		\begin{equation*} 
			\iintd f^\init(x,\xi)\n{x}^n\d x\dd\xi < \C,
		\end{equation*}
		then
		\begin{equation*}
			\iintd f(\cdot,x,\xi)\n{x-t\xi}^n\d x\dd\xi \in L^\infty(\R_+).
		\end{equation*}
	\end{prop}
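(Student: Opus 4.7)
The plan is to mirror the proof of Theorem~\ref{th:propag_L} at the classical level, where the cancellations are transparent. Define $M_n(t) := \iintd f(t,x,\xi)|x-t\xi|^n\d x\d\xi$. Differentiating in time using \eqref{eq:Vlasov} and integrating by parts in $x$ and $\xi$, the contribution of the transport term $-\xi\cdot\nabla_x f$ paired with the weight produces $\iintd f\,n|x-t\xi|^{n-2}(x-t\xi)\cdot\xi\d x\d\xi$, which exactly cancels $\iintd f\,\partial_t|x-t\xi|^n\d x\d\xi$ coming from the explicit time dependence of the weight. Only the field contribution survives, giving
\begin{equation*}
\dt M_n(t) = -n\,t\iintd f\,E\cdot(x-t\xi)|x-t\xi|^{n-2}\d x\d\xi,
\end{equation*}
so that $|\dt M_n(t)| \leq n\,t\iintd f|E||x-t\xi|^{n-1}\d x\d\xi$.

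To close this, I apply Hölder in $\xi$ with exponents $n/(n-1)$ and $n$ to obtain
\begin{equation*}
\int f|x-t\xi|^{n-1}\d\xi \leq \rho_f(t,x)^{1/n}\Bigl(\int f|x-t\xi|^n\d\xi\Bigr)^{(n-1)/n},
\end{equation*}
then Hölder in $x$ with the second exponent chosen so that the local moment integrates to $M_n(t)^{(n-1)/n}$, and the weak Young inequality applied to $E=-\nabla K*\rho_f$ with $\nabla K\in L^{\bb,\infty}$, to reduce the bound to $C\|\rho_f(t,\cdot)\|_{L^p}^{(n+1)/n}M_n(t)^{(n-1)/n}$ for a specific $p=p(n,\bb)$. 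The matching of all the Hölder/HLS exponents corresponds to the critical value $\bb=\bb_n$ and the assumption $\bb>\bb_n$ provides the necessary slack. Next I use the classical dispersion/moment interpolation obtained by splitting $\rho_f = \int f\d\xi$ according to $|x-t\xi|\leq R$ or $|x-t\xi|>R$ and optimizing in $R$:
\begin{equation*}
\rho_f(t,x) \leq C\,t^{-d\alpha/r'}\|f(t,x,\cdot)\|_{L^r_\xi}^{\alpha}\Bigl(\int f|x-t\xi|^n\d\xi\Bigr)^{1-\alpha},\quad \alpha := \tfrac{n}{n+d/r'},
\end{equation*}
combined with the conservation $\|f(t,\cdot,\cdot)\|_{L^r}=\|f^\init\|_{L^r}$ along \eqref{eq:Vlasov} and Hölder in $x$ with exponents $r/\alpha$ and $1/(1-\alpha)$, to get $\|\rho_f(t,\cdot)\|_{L^p}\leq C\,t^{-\gamma}\|f^\init\|_{L^r}^{\alpha}M_n(t)^{1-\alpha}$ for explicit $\gamma=\gamma(n,r,d,\bb)$. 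For small $t$, this dispersion bound is glued to a short-time estimate using the initial $|\xi|^n$ moment, yielding an overall $(1+t)^{-\gamma}$ decay without a singularity at the origin.

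Collecting the pieces produces a closed differential inequality
\begin{equation*}
\dt M_n(t) \leq C\,t\,(1+t)^{-\gamma(n+1)/n} M_n(t)^{1+\kappa}
\end{equation*}
with $\kappa>0$ explicit; the conditions $\bb>\bb_n$ and $\bb<d/2$ in \eqref{eq:cond_K_1} are precisely what guarantee $\kappa>0$ together with $\int_0^\infty t\,(1+t)^{-\gamma(n+1)/n}\d t<\infty$, while the lower bound $\bb>\bb_4$ controls the interpolation terms uniformly for all even $n\geq 4$. A Gronwall-type argument based on $M_n(t)^{-\kappa}\geq M_n(0)^{-\kappa}-C\int_0^t s(1+s)^{-\gamma(n+1)/n}\d s$ then shows that if $M_n(0)=\iintd f^\init|x|^n\d x\d\xi$ is smaller than an explicit threshold $\C$ depending on the mass $M_0$, the initial velocity moment $\iintd f^\init|\xi|^n\d x\d\xi$, $\|f^\init\|_{L^r}$ and $\|\nabla K\|_{L^{\bb,\infty}}$, then $M_n(t)^{-\kappa}$ stays strictly positive for all $t\geq 0$, and hence $M_n\in L^\infty(\R_+)$. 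The main obstacle is not any individual step---each is standard---but the careful bookkeeping of exponents needed to simultaneously satisfy Hölder/HLS admissibility, $\kappa>0$, time-integrability of the forcing, and to extract the sharp explicit threshold $\C$ in the smallness condition.
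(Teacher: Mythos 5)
Your overall strategy is the same as the paper's: differentiate the transported moment $L_n(t)=\iintd f\,|x-t\xi|^n\d x\d\xi$, use the exact cancellation between the free transport term and the time derivative of the weight, estimate the surviving field term by H\"older plus weak Young/HLS for $\nabla K\in L^{\bb,\infty}$, and control the densities through the kinetic interpolation with the transported weight, which produces the dispersive factor $t^{-a}$, $a=\frac d\bb-1>1$; this is exactly Proposition~\ref{prop:classic_estim_t_long}, and your endgame (smallness of $\iintd f^\init|x|^n$ feeding a Gronwall barrier, with a short-time input from local propagation of velocity moments) mirrors the proof of Theorem~\ref{th:propag_L}. The exponent bookkeeping in your H\"older/HLS step is consistent with the paper's ($\|\rho_f\|_{L^p}^{(n+1)/n}$ with $p'=\frac{(n+1)\bb}{n}$ reproduces the forcing $t^{1-d/\bb}=t^{-a}$ and the power $L_n^{1+a/n}$).

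There is, however, one genuine gap: the ``glued'' single differential inequality $\frac{\dd}{\dd t}L_n\leq C\,t\,(1+t)^{-\gamma(n+1)/n}L_n^{1+\kappa}$ is not valid near $t=0$. For small times your bound on $\|\rho_f\|_{L^p}$ comes from the (locally propagated) velocity moments and carries \emph{no} power of $L_n$, so the force term is only controlled by $C\,t\,L_n^{(n-1)/n}$; since $(n-1)/n<1+\kappa$, you cannot upgrade this to $C\,t\,L_n^{1+\kappa}$ unless $L_n$ is bounded below — and the regime of interest is precisely $L_n(0)=\iintd f^\init|x|^n\d x\d\xi$ small, where the upgraded inequality is simply false. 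Consequently the one-line Gronwall identity $L_n(t)^{-\kappa}\geq L_n(0)^{-\kappa}-C\int_0^t s(1+s)^{-\gamma(n+1)/n}\d s$ is not justified as stated. The repair is the paper's two-regime argument: keep the singular inequality $|\frac{\dd}{\dd t}L_n|\leq C t^{-a}L_n^{1+a/n}$ only for $t\geq\tau$, bound $L_n(\tau)$ for small $\tau$ by a separate short-time estimate (either your ODE $\frac{\dd}{\dd t}L_n\lesssim t\,L_n^{(n-1)/n}$, or the classical analogue of Proposition~\ref{prop:estim_t_court}, i.e.\ $L_n\lesssim N_n+t^n M_n$ with $M_n$ locally bounded), and then choose $\tau$ and the threshold $\C$ so that $L_n(\tau)^{-a/n}>\frac{1}{A\tau^{a-1}}$; this is where the additional restrictions $\bb>\max(\bb_4,\bb_n)$ (needed for the local velocity-moment propagation you invoke) and $\bb>\frac d3$ enter, rather than in the roles you assign them. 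With that correction your argument closes and yields the stated smallness condition.
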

	
	\begin{remark}
		For the \ref{eq:Vlasov} equation, contrarily to the \ref{eq:Hartree} equation, we generally not have strong solutions in our setting (the velocity moments are not derivatives in the classical case). However, we still have global existence of renormalized solutions (see \cite{diperna_solutions_1988,diperna_global_1988}).
	\end{remark}

	We can use the first theorem to obtain good estimates on the space and velocity moments and on the spatial density that do not depend on $\hbar$.
	
	\begin{thm}\label{th:regu_VP}
		Let $d\geq 3$, $r\in[d',\infty]$, $n\in (2\N)\backslash\{0,2\}$ and assume 
		\begin{equation}\label{eq:cond_K_2}
			\nabla K\in L^{\fb,\infty} \text{ with } \fb\in\(\max\!\(\fb_4,\tfrac{d}{3}\)\!,\tfrac{d}{2}\),
		\end{equation}
		and let $\op$ be a solution of the \ref{eq:Hartree} equation with initial condition
		\begin{equation*}
			\op^\init\in \L^r\cap\L^1_+(1+\n{x}^4+\n{\opp}^n),
		\end{equation*}
		for a given even integer $n\geq 4$. Then there exists a constant $\C$ depending on $M_0$, $\Tr{\n{\opp}^4\op^\init}$, $\Nrm{\nabla K}{L^{\fb,\infty}}$, and $\Nrm{\op^\init}{\L^r}$ such that if
		\begin{equation*}
			\tr(\n{x}^4\op^\init) \leq \C,
		\end{equation*}
		then there exists $c_n = c_{d,n,r} \geq 0$ and $C>0$ depending on the initial conditions such that
		\begin{align}\label{eq:asympt_Mn}
			\tr(\n{\opp}^n\op) &\leq C \weight{t}^{c_n}
			\\\label{eq:asympt_Nn}
			\tr(\n{x}^n\op) &\leq C \weight{t}^{n\(c_n+1\)}
			\\\label{eq:asympt_rho}
			\Nrm{\rho}{L^p} &\leq C \weight{t}^{-d/p'},
		\end{align}
		where $p' = r' + \frac{d}{4}$ and $\weight{t} = \sqrt{1+t^2}$. Moreover, if $\Tr{\n{x}^n\op^\init}$ is sufficiently small, then we can get more precise estimates
		\begin{align*}
			\tr(\n{\opp}^n\op) &\leq C
			\\
			\tr(\n{x}^n\op) &\leq C \weight{t}^{n}
			\\
			\Nrm{\rho}{L^{p_n}} &\leq C \weight{t}^{-d/p_n'},
		\end{align*}
		where $p_n' = r' + \frac{d}{n}$.
	\end{thm}
	
	We can once more state the analogue result for the \ref{eq:Vlasov} equation.
	\begin{prop}\label{prop:classic_regu_VP}
		Let $d\geq 3$, $r\in[d',\infty]$, $n\in (2\N)\backslash\{0,2\}$ and assume $K$ verifies~\eqref{eq:cond_K_2}. Let $f$ be a solution of the \ref{eq:Vlasov}~equation with nonnegative initial condition
		\begin{equation*}
			f^\init\in L^r_{x,\xi}\cap L^1_{x,\xi}(1+\n{x}^4+\n{\xi}^n),
		\end{equation*}
		for a given even integer $n\geq 4$. Then there exists $\C > 0$ such that if
		\begin{equation*}
			\iintd f^\init(x,\xi)\n{x}^4\d x\dd\xi \leq \C,
		\end{equation*}
		then there exists $c_n = c_{d,n,r} > 0$ and $C>0$ depending on the initial conditions such that
		\begin{align*} 
			\iintd f(t,x,\xi)\n{\xi}^n\d x\dd\xi &\leq C \weight{t}^{c_n}
			\\ 
			\iintd f(t,x,\xi)\n{x}^n\d x\dd\xi &\leq C \weight{t}^{n(c_n+1)}
			\\ 
			\Nrm{\rho_f}{L^p} &\leq C \weight{t}^{-d/p'},
		\end{align*}
		where $p' = r' + \frac{d}{4}$. As in the previous theorem, one can take $c_n=0$ and $p'=r'+\frac{d}{n}$ if the initial space moments of order $n$ are small.
	\end{prop}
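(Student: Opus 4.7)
The plan is to mimic, in the classical phase space setting, the strategy used for the quantum analogue Theorem~\ref{th:regu_VP}, replacing every semiclassical tool by its classical counterpart. Two structural facts make this transparent: the Vlasov flow is measure-preserving on $\R^{2d}$, so $\|f(t,\cdot)\|_{L^r_{x,\xi}}$ is conserved in time (which plays the role of Schatten norm conservation in the quantum case); and Proposition~\ref{prop:classic_propag_L} applied with exponent $4$, thanks to hypothesis \eqref{eq:cond_K_2} and the smallness of $\iintd f^\init|x|^4\d x\d\xi$, furnishes the global bound $\iintd f(t,x,\xi)\,|x-t\xi|^4\d x\d\xi\le C$ on all of $\R_+$. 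This modified moment will act as a dispersive reservoir throughout.

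First I would derive the dispersive estimate on $\rho_f$. Fix $t>0$ and $x\in\R^d$, split the $\xi$-integral defining $\rho_f(t,x)$ according to whether $|x-t\xi|\le R$ or not, bound the first piece by Hölder (using that the constraint set is a ball of volume $\sim(R/t)^d$ and the conserved $L^r_\xi$-norm of $f(t,x,\cdot)$) and the second by Markov's inequality against the modified moment, and optimise in $R$. This yields the pointwise estimate
\[
\rho_f(t,x)\le C\,t^{-\frac{d}{p'}}\,\|f(t,x,\cdot)\|_{L^r_\xi}^{\frac{4r'}{d+4r'}}\,m_4(t,x)^{\frac{d}{d+4r'}},
\]
where $m_4(t,x):=\int f(t,x,\xi)|x-t\xi|^4\d\xi$ and $p'=r'+d/4$. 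Raising to the $p$-th power and applying Hölder in $x$ with the dual pair chosen so that the $f$-factor integrates to total exponent $r$ and the $m_4$-factor to total exponent $1$ produces $\|\rho_f(t,\cdot)\|_{L^p}\le C\,t^{-d/p'}$.

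Next I would propagate the velocity moment $M_n(t):=\iintd f|\xi|^n\d x\d\xi$. Differentiating along \eqref{eq:Vlasov} and integrating by parts gives $M_n'(t)\le n\iintd|E|\,f\,|\xi|^{n-1}\d x\d\xi$. The weak Young inequality applied to $E=-\nabla K*\rho_f$ together with the dispersive estimate yields $\|E(t,\cdot)\|_{L^s}\le Ct^{-d/p'}$ for $1/s=1/p+1/\bb-1$, and a Hölder interpolation between $M_n$, the conserved $\|f\|_{L^r_{x,\xi}}$ and the mass bounds the right-hand side by $C\,t^{-d/p'}(1+M_n(t))^\theta$ with some $\theta<1$; the condition $\bb>\max(\bb_4,d/3)$ in \eqref{eq:cond_K_2} is exactly what makes this interpolation sub-critical and also shows that $c_4$ can be taken arbitrarily small. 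Grönwall then gives $M_n(t)\le C(1+t^{c_n})$ for an explicit $c_n>0$. For the spatial moment, I would use $|x|^n\le 2^{n-1}(|x-t\xi|^n+t^n|\xi|^n)$ together with a bound on $N_n(t):=\iintd f|x-t\xi|^n\d x\d\xi$ obtained by the same scheme applied to the identity $N_n'(t)\le nt\iintd|E|\,f\,|x-t\xi|^{n-1}\d x\d\xi$, which closes using the already-controlled $M_n$ and produces the announced polynomial growth $C(1+t^{n(c_n+1)})$.

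I expect the main obstacle to be the careful bookkeeping of Lebesgue exponents: the weak Young estimate on $E$, the dispersive bound on $\rho_f$ and the moment interpolation must be made simultaneously compatible, and it is precisely the requirement $\bb\in(\max(\bb_4,d/3),d/2)$ in \eqref{eq:cond_K_2} that keeps the relevant exponents sub-critical.
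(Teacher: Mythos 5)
Your derivation of the dispersive estimate on $\rho_f$ (splitting the $\xi$-integral at $|x-t\xi|\le R$, Markov against the $L_4$ modified moment, optimizing in $R$, then Hölder in $x$) is correct — it is the classical version of Corollary~\ref{cor:LT_transporte} — and the exponent arithmetic works out exactly as you write. The structural remarks about conservation of $\|f\|_{L^r_{x,\xi}}$ playing the role of Schatten-norm conservation, and Proposition~\ref{prop:classic_propag_L} playing the role of Theorem~\ref{th:propag_L}, are also the right picture. However, the moment-propagation step is not the paper's route and appears to contain a genuine gap.

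For the velocity moment, you bound $M_n'(t)\le n\|E\|_{L^s}\,\|\int f|\xi|^{n-1}\,\mathrm d\xi\|_{L^{s'}}$ and use the decay $\|\rho_f\|_{L^q}\le Ct^{-d/q'}$ (valid for $q'\ge p'=r'+d/4$) together with the kinetic interpolation inequality~\eqref{eq:interp} at $k=n-1$. But for Young's inequality and the interpolation to be simultaneously applicable you need $s' \le p_{n,n-1}$ and $q\le p$, i.e. $\frac1{q'}\le \frac{4}{4r'+d}$ together with $\frac{1}{q'}\ge \frac1\bb-\frac1{nr'+d}$. For $r$ near $d'$ (so $r'$ near $d$) these two constraints are incompatible under $\bb<d/2$: for $d=3$, $r'=3$, $n=4$ one needs $\frac1\bb\le\frac13$ while $\frac1\bb>\frac23$. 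So the Hölder/Young bookkeeping you flag as "the main obstacle" in fact does not close on the full range $r\in[d',\infty]$. The paper avoids exactly this: for $M_n$, $n\ge 6$, it does not use the dispersive estimate at all, but a recurrence inequality (formula (44) in \cite{lafleche_propagation_2018}) of the form $M_n'\le C\,M_{n-2}^{\Theta_0}M_n^{\Theta}$ with $\Theta\le 1$, where the $\rho_f$ factor is paired with the already-controlled $M_{n-2}$ rather than with dispersive decay of $\|\rho_f\|_{L^p}$; the base case $n=4$ comes from Proposition~\ref{prop:propag_MN}, which uses Corollary~\ref{cor:quasi_convexity} and the uniform $L_4$ bound rather than decay of $E$.

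For the spatial moment there is a second, independent gap. You write $|x|^n\le 2^{n-1}(|x-t\xi|^n+t^n|\xi|^n)$ and propose to bound $L_n:=\iint f|x-t\xi|^n$ "by the same scheme." But the global bound on $L_n$ requires either a smallness condition on $\iint f^\init|x|^n$ (not assumed — only $n=4$ smallness is assumed) or $N_n$ itself (circular). The paper instead uses the much simpler closed ODE $N_n'\le n\,N_n^{1-1/n}M_n^{1/n}$ (the classical analogue of \eqref{eq:dt_N}), which only requires the already-established $M_n$ bound and Grönwall. So the spatial-moment step should follow the paper's direct ODE route, not a decomposition via $L_n$.
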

	
	Before stating the result about the semiclassical limit, we recall the definition of the semiclassical Wasserstein-(Monge-Kantorovitch) distance introduced by Golse and Paul in \cite{golse_schrodinger_2017}. We say that $\gam\in L^1(\R^{2d},\PP)$ is a semiclassical coupling between a classical kinetic density $f\in L^1\cap\P(\R^{2d})$ and a density operator $\op\in\PP$ and we write $\gam\in\C_\hbar(f,\op)$ when 
	\begin{align*}
		\Tr{\gam(z)} &= f(z)
		\\
		\intdd \gam(z)\dd z &= \op.
	\end{align*}
	Then we define the semiclassical Wasserstein-(Monge-Kantorovich) pseudo-distance in the following way
	\begin{equation}\label{def:Wh}
		\Wh(f,\op) := \(\inf_{\gam\in\C_\hbar(f,\op)}\intdd \Tr{\mathbf{c}_\hbar(z)\gam(z)}\d z\)^\frac{1}{2},
	\end{equation}
	where $\mathbf{c}_\hbar(z)\varphi(y) = \(|x-y|^2 + |\xi-\opp|^2\)\varphi(y)$, $z = (x,\mathbf{\xi})$ and $\opp = -i\hbar\nabla_y$. This is not a distance but it is comparable to the classical Wasserstein distance $W_2$ between the Wigner transform of the quantum density operator and the normal kinetic density, in the sense of the following theorem.
	\begin{theorem}[Golse \& Paul \cite{golse_schrodinger_2017}]\label{th:comparaison_W2}
		Let $\op\in\PP$ and $f\in\P(\R^{2d})$ be such that 
		\begin{equation*}
			\intdd f(x,\xi)\(\n{x}^2+\n{\xi}^2\)\d x\dd\xi < \infty.
		\end{equation*}
		Then one has $\Wh(f,\op)^2 \geq d\hbar$ and for the Husimi transform $\fht$ of $\op$, it holds 
		\begin{equation}\label{eq:comparaison_W2}
			W_2(f,\fht)^2 \leq \Wh(f,\op)^2 + d\hbar.
		\end{equation}
	\end{theorem}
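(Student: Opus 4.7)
The plan is to prove the two inequalities separately, using the harmonic-oscillator spectrum for the lower bound and a coherent-state construction for the upper bound.

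For the lower bound, I would observe that for every fixed $z=(x,\xi)\in\R^{2d}$ the operator $\mathbf{c}_\hbar(z) = (Y-x)^2 + (-i\hbar\nabla_y-\xi)^2$ on $L^2(\R^d_y)$ (with $Y$ denoting multiplication by $y$) is unitarily equivalent, via Weyl translations in phase space, to the isotropic quantum harmonic oscillator $-\hbar^2\Delta+|y|^2$. Since the latter has ground-state energy $d\hbar$, this gives the operator inequality $\mathbf{c}_\hbar(z)\geq d\hbar\,I$. Consequently, for every $\gam\in\mathcal{C}(f,\op)$ and every $z$,
\[
\Tr(\mathbf{c}_\hbar(z)\gam(z))\ \geq\ d\hbar\,\Tr(\gam(z))\ =\ d\hbar\,f(z),
\]
and integrating over $z$, together with $\int f = 1$, yields $\Wh(f,\op)^2\geq d\hbar$.

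For the upper bound I would construct from a semiclassical coupling $\gam\in\mathcal{C}(f,\op)$ a classical coupling between $f$ and $\fht$. Let $\phi_{z'}$ denote the Gaussian coherent state centered at $z'$, for which the resolution of identity $(2\pi\hbar)^{-d}\int|\phi_{z'}\rangle\langle\phi_{z'}|\,\dd z' = I$ holds and $\fht(z') = (2\pi\hbar)^{-d}\langle\phi_{z'},\op\,\phi_{z'}\rangle$. Setting
\[
\pi(z,z')\ :=\ (2\pi\hbar)^{-d}\langle\phi_{z'},\gam(z)\phi_{z'}\rangle\ \geq\ 0,
\]
the resolution of identity together with the two marginal properties of $\gam$ give that $\pi$ has marginals $f$ and $\fht$, hence is a transport plan; in particular
\[
W_2(f,\fht)^2\ \leq\ \iintd\iintd|z-z'|^2\pi(z,z')\,\dd z\,\dd z'.
\]

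The crux of the argument, and the step I expect to be the main obstacle, is the anti-Wick/Weyl correspondence for the quadratic symbol $z'\mapsto|z-z'|^2$, namely the operator identity
\[
(2\pi\hbar)^{-d}\int|z-z'|^2\,|\phi_{z'}\rangle\langle\phi_{z'}|\,\dd z'\ =\ \mathbf{c}_\hbar(z)\, +\, d\hbar\,I.
\]
This encodes the minimal phase-space spread of a coherent state and can be established either by a direct Gaussian computation, or via the pointwise expectation $\langle\phi_{z'},\mathbf{c}_\hbar(z)\phi_{z'}\rangle = |z-z'|^2 + d\hbar$ (from the first and second moments of position and momentum in a coherent state) combined with the general relation between the anti-Wick and Weyl quantisations of quadratic symbols. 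Tracing this identity against $\gam(z)$ gives $\int|z-z'|^2\pi(z,z')\,\dd z' = \Tr(\mathbf{c}_\hbar(z)\gam(z)) + d\hbar\,f(z)$; integrating over $z$ and using $\int f = 1$, the cost of $\pi$ equals $\int\Tr(\mathbf{c}_\hbar(z)\gam(z))\,\dd z + d\hbar$. Minimising over $\gam\in\mathcal{C}(f,\op)$ delivers $W_2(f,\fht)^2\leq\Wh(f,\op)^2+d\hbar$.
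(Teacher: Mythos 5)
The paper does not prove this result: it is stated as a recalled theorem, attributed to Golse and Paul \cite{golse_schrodinger_2017}, so there is no in-paper argument to compare against. Your proposal is nevertheless correct and reproduces, in outline, the original Golse--Paul argument. A few remarks on the steps. For the lower bound, the reduction of $\mathbf{c}_\hbar(z)$ by phase-space translation to $-\hbar^2\Delta+|y|^2$ is right, and the ground-state energy of that operator on $L^2(\R^d)$ is indeed $d\hbar$ (after the rescaling $y\mapsto\sqrt{\hbar}\,u$ one gets $\hbar(-\Delta_u+|u|^2)$, whose bottom is $d$), so $\mathbf{c}_\hbar(z)\geq d\hbar\,I$ and the inequality follows by tracing against $\gam(z)$ and integrating with $\Tr\gam(z)=f(z)$, $\int f=1$. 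For the upper bound, the coupling $\pi(z,z')=(2\pi\hbar)^{-d}\langle\phi_{z'},\gam(z)\phi_{z'}\rangle$ is exactly the right object: its $z'$-marginal is $\Tr\gam(z)=f(z)$ by the resolution of the identity, and its $z$-marginal is $\fht(z')$ since $\int\gam\,\dd z=\op$. The identity
\begin{equation*}
(2\pi\hbar)^{-d}\int |z-z'|^2\,\ket{\phi_{z'}}\bra{\phi_{z'}}\,\dd z' \;=\; \mathbf{c}_\hbar(z) + d\hbar\,I
\end{equation*}
is the anti-Wick/Weyl comparison for a quadratic symbol; it is cleanest to justify it by the convolution formula $\OP^{AW}(a)=\OP^{W}(a*g_\hbar)$ with $g_\hbar$ the Gaussian of phase-space covariance $\tfrac{\hbar}{2}I_{2d}$, which for $a(z')=|z-z'|^2$ adds exactly $\int |w|^2 g_\hbar(w)\,\dd w = d\hbar$. (The pointwise coherent-state expectation $\langle\phi_{z'},\mathbf{c}_\hbar(z)\phi_{z'}\rangle=|z-z'|^2+d\hbar$ that you mention only gives the diagonal of this operator identity, so on its own it is not sufficient; the convolution argument, or a direct Gaussian computation of the left-hand side, is the clean route.) Tracing against $\gam(z)$, integrating in $z$, and minimising over $\gam\in\mathcal{C}(f,\op)$ gives $W_2(f,\fht)^2\leq\Wh(f,\op)^2+d\hbar$ as claimed. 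So the proposal is sound; it is the proof one finds in the cited source rather than anything argued in the present paper.
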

	
	See \cite{golse_wave_2018} for more results about this pseudo-distance and the definition of the Husimi transform.
	
	Our last theorem uses these results to obtain the semiclassical limit. We also recall the following theorem which will gives us our assumptions on the classical solution of the \ref{eq:Vlasov}~equation.
	\begin{theorem}[Lions \& Perthame \cite{lions_propagation_1991}, Loeper \cite{loeper_uniqueness_2006}]\label{th:VP_classique}
		Assume $f^\init\in L^\infty_{x,\xi}(\R^6)$ verify
		\begin{equation}\label{hyp:VP_moment}
			\iint_{\R^6} f^\init\n{\xi}^{n_0}\d x\dd\xi < C \text{ for a given } n_0 > 6,
		\end{equation}
		and for all $R>0$,
		\begin{equation}\label{hyp:VP_intg}
			\supess_{(y,w)\in\R^6}\{f^\init(y+t\xi,w), |x-y|\leq Rt^2, |\xi-w|\leq Rt\} \in L^\infty_\loc(\R_+,L^\infty_xL^1_\xi).
		\end{equation}
		Then there exists a unique solution to the \ref{eq:Vlasov}~equation with initial condition $f_{t=0} = f^\init$. Moreover, in this case, the spatial density verifies $\rho_f\in L^\infty_\loc(\R_+,L^\infty).$
	\end{theorem}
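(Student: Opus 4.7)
The plan is to split the argument into existence and uniqueness, following Lions-Perthame for the former and Loeper's optimal-transport stability estimate for the latter.

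For existence, I would build approximate solutions $f^\eps$ by smoothing the Coulomb kernel and the initial datum, derive a priori estimates uniform in $\eps$, and pass to the limit by weak compactness. The core of the a priori estimate is a bootstrap on a large parameter $R_0>0$: assuming $\|\nabla V(s)\|_{L^\infty}\leq R_0$ on $[0,t]$, integration of the Hamiltonian ODE shows that the backward characteristic arriving at $(x,\xi)$ at time $t$ starts at $(y,w)$ with $|y-(x-t\xi)|\leq R_0 t^2/2$ and $|w-\xi|\leq R_0 t$. Conservation of $f$ along trajectories then yields the pointwise bound
\begin{equation*}
f(t,x,\xi) \leq \supess \bigl\{f^\init(y+t\xi,w) : |x-y|\leq R_0 t^2,\ |\xi-w|\leq R_0 t\bigr\},
\end{equation*}
so hypothesis \eqref{hyp:VP_intg} delivers $\rho_f\in L^\infty_\loc(\R_+,L^\infty)$. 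Splitting $\nabla K$ into a near-singular piece in $L^{q_1}$ with $q_1<3/2$ and a far piece in $L^{q_2}$ with $q_2>3/2$ and applying Young's inequality gives $\|\nabla V\|_\infty \lesssim \|\rho_f\|_1+\|\rho_f\|_\infty$, which closes the bootstrap for $R_0$ chosen large enough, at least on a short time interval. To iterate, I would propagate the velocity moment $M_{n_0}(t)=\iintd f(t,x,\xi)|\xi|^{n_0}\d x\d\xi$ by testing the Vlasov equation against $|\xi|^{n_0}$: integration by parts in $x$ eliminates the transport term, and integration by parts in $\xi$ together with Hölder yields $\ddt{M_{n_0}}\leq C\,\|\nabla V\|_\infty\, M_{n_0}^{(n_0-1)/n_0}$, which combined with the local control of $\|\nabla V\|_\infty$ provides a continuation criterion.

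For uniqueness, once $\rho_f\in L^\infty_\loc(L^\infty)$ is established, I would invoke Loeper's Coulomb stability estimate $\|\nabla V_1-\nabla V_2\|_{L^2}^2\leq C\max_i\|\rho_i\|_\infty\,W_2(\rho_1,\rho_2)^2$. Plugged into the time derivative of $W_2(f_1,f_2)^2$ along the coupled Vlasov flow of two solutions with identical initial data, this produces a linear Gronwall inequality and therefore $W_2(f_1,f_2)\equiv 0$.

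The main obstacle is closing the bootstrap globally in time: the pointwise characteristic bound only controls $\rho_f$ on short intervals, while moment propagation requires $\|\nabla V\|_\infty$ already under control. Condition \eqref{hyp:VP_intg} is essential precisely because the forbidden balls in position grow as $R_0 t^2$; matching this quadratic growth with the integrated moment bound is what prevents the interval of existence from collapsing as $R_0$ is updated, and constitutes the technical heart of the Lions-Perthame strategy.
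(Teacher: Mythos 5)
First, note that the paper does not prove this statement at all: it is quoted verbatim as a known result of Lions--Perthame \cite{lions_propagation_1991} (global existence with propagation of velocity moments and the $L^\infty$ bound on the density under \eqref{hyp:VP_intg}) and Loeper \cite{loeper_uniqueness_2006} (uniqueness once $\rho_f\in L^\infty_\loc(\R_+,L^\infty)$), and is only used as an input for Theorem~\ref{th:CV_VP}. So the comparison can only be with the cited works, whose overall strategy your outline does follow (moment propagation $+$ characteristic deviation $+$ \eqref{hyp:VP_intg} for the $L^\infty$ density bound, then Loeper's $W_2$ stability for uniqueness).

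There is, however, a genuine gap, and it is exactly the step you flag at the end: your bootstrap is circular and does not close globally. Your moment inequality $\ddt{M_{n_0}} \leq C\,\|\nabla V\|_{L^\infty} M_{n_0}^{1-1/n_0}$ presupposes the very field bound you are trying to produce, and hypothesis \eqref{hyp:VP_intg} only gives, for each \emph{fixed} $R$, a locally-in-time bounded quantity, with no uniformity as the bootstrap parameter $R_0$ is enlarged; so ``choosing $R_0$ large enough'' does not make the output of the Young-inequality estimate fall below $R_0$, and the interval of validity may indeed collapse. In Lions--Perthame the logical order is the reverse of yours: the moments $M_{n_0}$ are propagated globally \emph{without} any a priori $L^\infty$ bound on the force, by representing the density (hence the field) along characteristics and splitting the time integral into a recent-past part, controlled by the kinetic interpolation bound $\|\rho_f\|_{L^{(n_0+3)/3}}\lesssim M_{n_0}^{\theta}$, and a far-past part controlled by free-transport dispersion; this closed Gronwall loop is the technical heart of \cite{lions_propagation_1991} and is absent from your sketch. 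Only after that does the hypothesis $n_0>6$ enter: it gives $(n_0+3)/3>3$, hence $\rho_f\in L^1\cap L^p$ with $p>3$ and therefore $E=-\nabla K*\rho_f\in L^\infty_\loc(\R_+,L^\infty)$; with that field bound already in hand, your characteristic-deviation estimate combined with \eqref{hyp:VP_intg} (applied with $R$ equal to the known field bound on $[0,T]$) yields $\rho_f\in L^\infty_\loc(\R_+,L^\infty)$. The fact that your argument never uses $n_0>6$ is a symptom of this inversion. A minor further point: Loeper's uniqueness does not reduce to a linear Gronwall inequality --- with only $\rho\in L^\infty$ the field is log-Lipschitz, so the coupled estimate is of Osgood type, $\dot Q\lesssim Q\log(1/Q)$, which still forces $Q\equiv 0$ but requires that extra step.
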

	
	\begin{thm}\label{th:CV_VP}
		Let $d\geq 3$ and assume 
		\begin{align*}
			\nabla K\in B^1_{1,\infty} \cap L^\fb &\text{ with } \fb\in\(\max\!\(\fb_4,\tfrac{d}{3}\)\!,\tfrac{d}{2}\).
		\end{align*}
		Let $\op$ be a solution of the \ref{eq:Hartree} equation with initial condition $\op^\init\in\PP$ verifying 
		\begin{align*}
			\op^\init &\in \L^\infty \cap  \L^1_+(1+\n{\opp}^{n_1})
			\\
			\forall \ii\in\Int{1,d},\, \opp_\ii^{n_0}\op^\init &\in \L^\infty,
		\end{align*}
		where $\opp_\ii := -i\hbar\partial_\ii$, $\Int{1,d} = [1,d]\cap\N$, and $(n_0,n)\in(2\N)^2$ is such that
		\begin{align*}
			n_0 &> d
			\\
			n &\geq \frac{d+\fb\(n_0-1\)}{\fb-1}.
		\end{align*}
		Let $f$ is a solution of the \ref{eq:Vlasov}~equation with initial condition $f^\init$ verifying the hypotheses of Theorem~\ref{th:VP_classique} and of mass $M_0=1$. Then there exists a constant $\C>0$ depending on $\tr(\n{\opp}^4\op^\init)$, $\Nrm{\nabla K}{L^{\fb,\infty}}$ and $\Nrm{\op^\init}{\L^\infty}$ such that if
		\begin{equation*}
			\tr(\n{x}^4\op^\init) \leq \C,
		\end{equation*}
		then there exists a constant $C>0$ depending on the initial conditions such that
		\begin{align}\label{eq:rho_bound}
			\Nrm{\rho(t)}{L^\infty} \leq C\, \weight{t}^{n_0\(1+\frac{c}{\fb'}\)}
		\end{align}
		where $c=c_n$ is given by \eqref{eq:asympt_Mn}. Again, if additionally $\Tr{\n{x}^n\op^\init}$ is also sufficiently small, then one can take $c=0$. Moreover, the following semiclassical estimate holds
		\begin{equation*}
			\Wh(f(t),\op(t)) \leq \Wh(f^\init,\op^\init)^{e^{\theta\lambda(t)}}\, e^{e^{\lambda(t)}},
		\end{equation*}
		with
		\begin{align*}
			\theta &= \frac{\sign(\ln\Wh(f^\init,\op^\init))}{2}
			\\
			\lambda(t) &= C^\init \weight{t}^{1+n_0\(1+\frac{c}{\fb'}\)}
			\\
			C^\init &= 1 + C\Nrm{\nabla K}{B^1_{1,\infty}} \sup_{t} \frac{\Nrm{\rho_f(t)}{L^\infty} + \Nrm{\rho(t)}{L^\infty}}{\weight{t}^{n_0(1+c/\fb')}},
		\end{align*}
		for some constant $C>0$ independent from the initial conditions.
	\end{thm}
	
	\begin{remark}
		Again, the additional assumption $\nabla K\in B^1_{1,\infty}$ is compatible with a kernel with a Coulomb singularity in dimension $d=3$ such as the one given in Remark~\ref{rem:ex}. However, higher local singularities such that the one from Equation~\eqref{eq:K_homogene} are not admissible for this result. This seems natural since even the uniqueness of solutions for the \ref{eq:Vlasov} equation is not known for such singular potentials.
	\end{remark}
	
	\begin{remark}
		This theorem implies a result of convergence in the classical Wasserstein distance at a rate $C_t\, \hbar^{C_t}$ as soon as the quantity $W_{2,\hbar}(f^\init,\op^\init)$ is initially smaller than some power of $\hbar$. In particular, this implies weak convergence of the Wigner transform of the solution of the \ref{eq:Hartree} equation to the solution of the \ref{eq:Vlasov} equation. Remark that by \cite[Theorem~2.4]{golse_schrodinger_2017}, $W_{2,\hbar}(f^\init,\op^\init)$ is always larger than $\sqrt{d\hbar}$. The fact that $W_{2,\hbar}$ can be controlled by the classical Wasserstein distance up to an error term $\sqrt{d\hbar}$ can be proved for example when the initial states are superposition of coherent states and this leads to results that can be written uniquely in term of the classical Wasserstein distance as in \cite[Section~7]{lafleche_propagation_2019}.
	\end{remark}

\section{Free Transport}
	
	We want to use the time decay properties of the kinetic free transport equation which writes for $f = f(t,x,\xi)$
	\begin{equation*}
		\partial_t f + \xi\cdot\nabla_x f = 0.
	\end{equation*}
	In quantum mechanics, free transport is given by the free Schrödinger equation
	\begin{equation*}
		i\hbar\,\partial_t \psi = H_0 \psi,
	\end{equation*}
	where $\hbar = \frac{h}{2\pi}$ and $H_0 = -\frac{\hbar^2\Delta}{2}$ which can be written $H_0 = \frac{\n{\opp}^2}{2}$ with the notation $\opp = -i\hbar\nabla$. The solution corresponding to the initial condition $\psi^\init$ can be written $\T_t\psi^\init$ where the semigroup $\T_t$ is given by
	\begin{equation}\label{eq:semigroup_schrodinger}
		\T_t\psi = e^{-itH_0/\hbar}\psi = \frac{e^{-i\pi\n{x}^2/(h t)}}{(ih t)^{d/2}} * \psi.
	\end{equation}
	The corresponding equation for density operators $\op\in\PP$ is
	\begin{equation}\label{eq:liouville}
		i\hbar\,\partial_t \op = [H_0,\op],
	\end{equation}
	whose solution is $\sfS_t\op^\init$ where the semigroup $\sfS_t$ is defined by
	\begin{equation}\label{eq:semigroup_op}
		\sfS_t\op := \T_t\op \T_{-t}.
	\end{equation}
	As it can be easily noticed, it holds $\T_t^* = \T_t^{-1} = \T_{-t}$ and for any $(\op_1,\op_2)\in\PP^2$, $\sfS(\op\op_2) = \sfS(\op) \sfS(\op_2)$. Moreover, a straightforward computation shows that
	\begin{equation}\label{eq:schr_semigroup_prop}
		\begin{split}
			\sfS_t \opp &= \opp
			\\
			\sfS_t x &= x-t\opp.
		\end{split}
	\end{equation}
	By the spectral theory, it implies that $\sfS_t(f(x)) = f(x-t\opp)$ for any nice function~$f$. By analogy, we can define the operator of translation of the impulsion $\opp$ by
	\begin{align}\nonumber
		\Tx_t \psi(x) &:= e^\frac{-\pi\n{x}^2t}{ih} \psi(x) = G_t(x)\psi(x)
		\\\label{def:S_tilde}
		\Sx_t \op &:= \Tx_t\op \Tx_{-t},
	\end{align}
	which verifies the equation
	\begin{equation*}
		i\hbar\,\partial_t (\Sx_t\op) = \com{\frac{-\n{x}^2}{2}, \Sx_t\op},
	\end{equation*}
	and the two following relations
	\begin{equation}\label{eq:x_move}
		\begin{split}
			\Sx_t x &= x
			\\
			\Sx_t \opp &= \opp - t x.
		\end{split}
	\end{equation}

	We recall the quantum kinetic interpolation inequality that was already used in \cite[Theorem~6]{lafleche_propagation_2019}. For $k\in2\N$ we define
	\begin{align*}
		\rho_k &:= \sum_{j\in J}\lambda_j|\opp^\frac{k}{2}\psi_j|^2 = \Diag{\opp^\frac{k}{2}\op\cdot\opp^\frac{k}{2}},
	\end{align*}
	and for $r\geq 1$ and $0\leq k\leq n$, we define the exponent $p_{n,k}$ by its Hölder conjugate
	\begin{equation}\label{def:exposants}
		p_{n,k}' = \(\frac{n}{k}\)' p'_n \text{ with } p'_n = \(r' + \frac{d}{n}\).
	\end{equation}
	Then the following inequality holds
	\begin{prop}
		Let $(k,n)\in (2\N)^2$ be such that $k\leq n$ and $\op\in\L^1(|\op|^n)\cap\L^r_+$ for a given $r\in[1,\infty]$. Then there exists $C = C_{d,r,n,k} > 0$ such that
		\begin{align}\label{eq:Lieb_Thirring_k}
			\Nrm{\rho_k}{L^{p_{n,k}}} &\leq C \Tr{\n{\opp}^n\op}^{1-\theta} \Nrm{\op}{\L^r}^{\theta},
		\end{align}
		where $\theta = \frac{r'}{p_{n,k}'}$.
	\end{prop}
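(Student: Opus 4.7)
The plan is to mimic the classical kinetic Gagliardo–Nirenberg interpolation at the phase-space level and then transcribe it to the quantum setting via spectral calculus of $|\opp|$; this is the approach taken in \cite[Theorem~6]{lafleche_propagation_2018}.

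First, I would work out the classical prototype. For $f \geq 0$ and $\rho_k^f(x) = \int f(x,\xi)|\xi|^k\d\xi$, split the $\xi$-integral at a radius $R = R(x)$: Hölder in $\xi$ bounds the low-momentum piece by $CR^{d/r'}\|f(x,\cdot)\|_{L^r_\xi}$, and the inequality $|\xi|^k\leq R^{k-n}|\xi|^n$ bounds the tail by $R^{k-n}M_n^f(x)$. Optimizing $R$ yields the pointwise estimate
\[
\rho_k^f(x) \leq C\,M_n^f(x)^a\,\|f(x,\cdot)\|_{L^r_\xi}^b, \qquad a = \frac{kr'+d}{nr'+d}, \quad b = \frac{(n-k)r'}{nr'+d},
\]
so $a+b = 1$. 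Raising to the power $p_{n,k}$ and applying Hölder in $x$ with exponents $u_1 = 1/(ap_{n,k})$ and $u_2 = r/(bp_{n,k})$ converts this into $\|\rho_k^f\|_{L^{p_{n,k}}} \leq C(M_n^f)^{1-\theta}\|f\|_{L^r}^\theta$ with $\theta = r'/p_{n,k}'$. A short algebraic check using $p_n' = r'+d/n$ and $p_{n,k}' = np_n'/(n-k)$ confirms both that $p_{n,k}(a+b/r) = 1$ (so that Hölder applies with $1/u_1 + 1/u_2 = 1$) and that $b = \theta$, $a = 1-\theta$.

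For the quantum version, $\rho_k(x) = \diag(\opp^{k/2}\op\,\opp^{k/2})(x)$ is the spatial diagonal of a positive operator. The classical momentum splitting is replaced by the spectral-calculus inequality $|\opp|^k \leq R^k\,\mathbf{1}_{|\opp|\leq R} + R^{k-n}|\opp|^n$, valid for every $R>0$. Sandwiching $\op$ by $|\opp|^{k/2}$ on either side and taking spatial diagonals (with a Cauchy–Schwarz splitting of $\op = \op^{1/2}\op^{1/2}$ where needed to avoid non-commutation issues) yields
\[
\rho_k(x) \leq R^k\,\diag(P_{\leq R}\,\op\,P_{\leq R})(x) + R^{k-n}\,\rho_n(x),
\]
with $P_{\leq R} = \mathbf{1}_{|\opp|\leq R}$. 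The low-momentum factor admits a pointwise bound of the form $\diag(P_{\leq R}\op P_{\leq R})(x) \leq C\,R^{d/r'}\|\op\|_{\L^r}$ (the quantum analogue of the classical Hölder bound), derived from the spectral expansion of $\op$, a Schatten-type interpolation on the $\ell^r$-sum of eigenvalues, and the phase-space kernel estimate $[P_{\leq R}](x,x) = CR^d/h^d$. Optimizing $R = R(x)$ then reduces the estimate to a quantum pointwise bound $\rho_k(x) \leq C\,\rho_n(x)^a\|\op\|_{\L^r}^b$, and the same Hölder-in-$x$ bookkeeping as in the classical step closes the argument.

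The main obstacle is establishing the pointwise control of $\diag(P_{\leq R}\op P_{\leq R})(x)$ with the correct $R$- and $\L^r$-dependence: a naive Cauchy–Schwarz applied eigenfunction by eigenfunction gives a loose estimate with $R^d$ instead of $R^{d/r'}$, so one needs a genuine Schatten interpolation on $\sum \lambda_j|P_{\leq R}\psi_j(x)|^2$ to recover the $|B_R|^{1/r'}$ scaling of the classical Hölder. Once this ingredient is in hand, tracking the semiclassical $\hbar$-normalization through the $\L^r$-convention and verifying that $p_{n,k}$ emerges as the correct Lebesgue exponent after the final Hölder step is routine.
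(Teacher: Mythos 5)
The paper does not prove this proposition; it is recalled from \cite[Theorem~6]{lafleche_propagation_2018}, so there is no in-text argument to compare against, and your proposal must be judged on its own. Your classical prototype is correct, but the transfer to the quantum setting breaks down at exactly the step you flag as the ``main obstacle,'' and no Schatten interpolation can rescue it: the pointwise bound $\diag(P_{\leq R}\,\op\, P_{\leq R})(x)\leq C\,R^{d/r'}\|\op\|_{\L^r}$ is \emph{false} for every finite $r$. Take $\op=\ket{\psi}\bra{\psi}$ with $\psi(x)=\delta^{-d/2}\chi(x/\delta)$, where $\chi$ is smooth, normalized in $L^2$, and has Fourier transform supported in the unit ball. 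Then $\|\op\|_{\L^r}=h^{-d/r'}$ regardless of $\delta$, and with $R=\hbar/\delta$ one has $P_{\leq R}\psi=\psi$, so
\begin{equation*}
\diag(P_{\leq R}\,\op\, P_{\leq R})(0)=|\psi(0)|^2\sim\delta^{-d}, \qquad R^{d/r'}\|\op\|_{\L^r}\sim\delta^{-d/r'},
\end{equation*}
and the claimed inequality is violated as $\delta\to 0$ whenever $r<\infty$. Structurally, the classical Hölder step controls the low-momentum piece by the \emph{local} quantity $\|f(x,\cdot)\|_{L^r_\xi}$; your replacement $\|\op\|_{\L^r}$ is global and $\delta$-invariant, so it cannot possibly dominate a pointwise spatial diagonal, which blows up for concentrated states. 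There is no local-in-$x$ quantum surrogate, and the pointwise-then-Hölder architecture does not transfer.

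The correct route to \eqref{eq:Lieb_Thirring_k}, and the one behind \cite[Theorem~6]{lafleche_propagation_2018}, is dual rather than pointwise. One tests against $0\leq\phi\in L^{p_{n,k}'}$, writes $\intd\rho_k\,\phi=\Tr\bigl(\op\,|\opp|^{k/2}\phi\,|\opp|^{k/2}\bigr)$, and for $\mu>0$ splits
\begin{equation*}
|\opp|^{k/2}\phi\,|\opp|^{k/2}\leq\bigl(|\opp|^{k/2}\phi\,|\opp|^{k/2}-\mu|\opp|^n\bigr)_+ + \mu|\opp|^n.
\end{equation*}
The second term contributes $\mu\Tr(|\opp|^n\op)$; the first is handled by the Schatten Hölder inequality $\Tr(\op A)\leq\|\op\|_{\L^r}\|A\|_{\L^{r'}}$ together with a Cwikel--Lieb--Rozenblum type semiclassical trace bound on $\|(|\opp|^{k/2}\phi|\opp|^{k/2}-\mu|\opp|^n)_+\|_{\L^{r'}}$ in terms of $\mu$ and $\|\phi\|_{L^{p_{n,k}'}}$. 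Optimizing over $\mu$ and over unit $\phi$ gives the claim. The phase-space volume factor $R^{d/r'}$ you were seeking does appear, but on the operator side inside that trace estimate; it has no pointwise-in-$x$ incarnation.
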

	
	From this result, we can get an inequality with an additional time decay if we replace the velocity moments by the Eulerian moments $\Tr{\n{x-t\opp}^n\op}$.
	
	\begin{cor}\label{cor:LT_transporte}
		Let $n\in 2\N$, $r\in[1,\infty]$, $p' := r' + \frac{d}{n}$ and $\theta = \frac{r'}{p'}$. Then
		\begin{equation*}
			\Nrm{\rho}{L^p} \leq \frac{1}{t^{d/p'}}\Tr{\n{x-t\opp}^n \op}^{1-\theta} \Nrm{\op}{\L^r}^\theta.
		\end{equation*}
	\end{cor}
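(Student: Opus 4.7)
The plan is to apply inequality \eqref{eq:Lieb_Thirring_k} with $k = 0$ (so that $\rho_0 = \rho$ and $p_{n,0} = p$) to the unitarily conjugated operator
\[
	\tilde\op := \Sx_{-1/t}\,\op = \Tx_{-1/t}\,\op\,\Tx_{1/t}.
\]
Since $\Tx_s$ is multiplication by the unit-modulus phase $G_s$ of \eqref{def:S_tilde}, it is a unitary on $L^2$, and therefore $\tilde\op \in \L^1_+$ with $\|\tilde\op\|_{\L^r} = \|\op\|_{\L^r}$ for every $r \in [1,\infty]$.

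First I would check that $\tilde\op$ has the same spatial density as $\op$. The integral kernel of $\tilde\op$ equals $G_{-1/t}(x)\,\r(x,y)\,G_{1/t}(y)$, and on the diagonal $y = x$ the two phases cancel because $G_{-1/t}(x)\,G_{1/t}(x) = 1$. Hence $\rho_{\tilde\op}(x) = \r(x,x) = \rho(x)$.

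Next I would rewrite the momentum moment of $\tilde\op$ using \eqref{eq:x_move}. By cyclicity of the trace,
\[
	\Tr(|\opp|^n\tilde\op) = \Tr\bigl(\Tx_{1/t}\,\opp^n\,\Tx_{-1/t}\,\op\bigr) = \Tr\bigl(|\Sx_{1/t}\opp|^n\,\op\bigr),
\]
and \eqref{eq:x_move} yields $\Sx_{1/t}\opp = \opp - x/t$. Using that $n$ is even and that $x - t\opp = -t(\opp - x/t)$ is self-adjoint, one obtains $|\Sx_{1/t}\opp|^n = t^{-n}\,|x - t\opp|^n$, so that
\[
	\Tr(|\opp|^n\tilde\op) = t^{-n}\,\Tr(|x-t\opp|^n\,\op).
\]

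Plugging these two identities into \eqref{eq:Lieb_Thirring_k} gives
\[
	\|\rho\|_{L^p} = \|\rho_{\tilde\op}\|_{L^p} \leq C\,t^{-n(1-\theta)}\,\Tr(|x-t\opp|^n\op)^{1-\theta}\,\|\op\|_{\L^r}^\theta,
\]
with $\theta = r'/p'$, and a one-line computation gives $n(1 - \theta) = n(p' - r')/p' = d/p'$ thanks to $p' - r' = d/n$. The only non-routine point is identifying the right conjugator $\Sx_{-1/t}$, which is the quantum counterpart of the classical phase-space shear $(x,\xi) \mapsto (x, \xi + x/t)$ that simultaneously preserves the position marginal and converts $|\xi|^n$ into $|x - t\xi|^n/t^n$; everything else reduces to algebraic bookkeeping from \eqref{eq:x_move} and the unitarity of $\Tx_s$.
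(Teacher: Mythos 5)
Your proof is correct and follows essentially the same route as the paper: conjugating by $\Sx_{-1/t}$, noting that this preserves both the diagonal density and the $\L^r$ norm, identifying $\Tr(|\opp|^n\Sx_{-1/t}\op) = t^{-n}\Tr(|x-t\opp|^n\op)$ via \eqref{eq:x_move}, and applying \eqref{eq:Lieb_Thirring_k} with $k=0$ together with the exponent identity $n(1-\theta)=d/p'$. No substantive difference from the paper's argument.
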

	
	\begin{demo}[ of Corollary~\ref{cor:LT_transporte}]
		We just remark that by Formula~\eqref{eq:x_move}, we get
		\begin{align*}
			t^{-n}\Tr{\n{x-t\opp}^n \op} &= \Tr{|\opp - x/t|^n\op} 
			\\
			&= \Tr{\Sx_{1/t}(\n{\opp}^n)\op} = \Tr{\n{\opp}^n\Sx_{-1/t}(\op)}.
		\end{align*}
		Moreover, since $\Tx$ is a unitary transformation, the following identities hold
		\begin{align*}
			\Diag{\Sx_t\op} &= G_t(x)\,\r(x,x)\,G_{-t}(x) = \rho(x)
			\\
			\Nrm{\Sx_{-1/t}\op}{\L^r} &= \Nrm{\op}{\L^r}.
		\end{align*}
		Then by the interpolation Inequality~\eqref{eq:Lieb_Thirring_k} we get
		\begin{align*}
			\Nrm{\rho}{L^p} &= \Nrm{\Diag{\Sx_{-1/t}\op}}{L^p}
			\\
			&\leq \Tr{\n{\opp}^n\Sx_{-1/t}(\op)}^{1-\theta} \Nrm{\Sx_{-1/t}\op}{\L^r}^\theta
			\\
			&\leq t^{-n(1-\theta)} \Tr{\n{x-t\opp}^n \op}^{1-\theta} \Nrm{\op}{\L^r}^\theta.
		\end{align*}
		Finally, we remark that $n(1-\theta) = \frac{n(r'+d/n-r')}{r'+d/n} = \frac{d}{p'}$ to get the result.
	\end{demo}

\section{Propagation of moments}

\subsection{Classical case.}

	In this section, we define the classical Eulerian, velocity and space moments by
	\begin{align*}
		L_n &:= \iintd f(t,x,\xi)\n{x-t\xi}^n\d x\dd \xi
		\\
		M_n &:= \iintd f(t,x,\xi)\n{\xi}^n\d x\dd \xi
		\\
		N_n &:= \iintd f(t,x,\xi)\n{x}^n\d x\dd \xi.
	\end{align*}
	
	\begin{prop}[Classical large time estimate]\label{prop:classic_estim_t_long}
		Let $(r,\fb)\in [1,\infty]\times[\fb_n,\infty]$, $\nabla K\in L^{\fb,\infty}$ and $f\in L^\infty(\R_+,L^r_{x,\xi}\cap L^1_{x,\xi})$ be a nonnegative solution of \ref{eq:Vlasov}~equation. Then for any $n\in 2\N$, there exists a constant $C = C_{d,r,n} > 0$ such that
		\begin{equation*}
			\n{\ddt{L_n}} \leq C \Nrm{\nabla K}{L^{\fb,\infty}} M_0^{\Theta_0}\Nrm{f^\init}{L^r_{x,\xi}}^\frac{r'}{\fb}\ \frac{L_n^{1+\frac{a}{n}}(t)}{t^a},
		\end{equation*}
		where $a = \frac{d}{\fb} - 1$ and $\Theta_0 = 1 - \frac{a}{n} - \frac{r'}{\fb}$.
	\end{prop}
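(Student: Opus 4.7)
The plan is to compute $\ddt{L_n}$ directly from the Vlasov equation and then bound the resulting integral by a chain of Hölder, weak Young, and dispersive inequalities.

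First I would observe that the free transport operator annihilates $|x-t\xi|^n$, since $\partial_t|x-t\xi|^n + \xi\cdot\nabla_x|x-t\xi|^n = 0$. Combined with the Vlasov equation and integration by parts in $\xi$ (using that $E$ is independent of $\xi$), this yields
\[
	\ddt{L_n} = -nt\iintd E(x)\cdot(x-t\xi)\,|x-t\xi|^{n-2}f\d x\d\xi,
\]
so that $\bigl|\ddt{L_n}\bigr|\leq nt\iintd |E(x)|\,|x-t\xi|^{n-1}f\d x\d\xi$.

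Next, I would split the integrand via the algebraic factorisation
\[
	|E(x)|\,|x-t\xi|^{n-1}f = |E(x)|\cdot\bigl(|x-t\xi|^n f\bigr)^{(n-1)/n}\cdot f^{1/n},
\]
and apply Hölder in $(x,\xi)$ with three conjugate exponents chosen so that the middle factor integrates to $L_n^{(n-1)/n}$ and the last factor gives $\|\rho_f\|_{L^{p_n}}^{1/n}$ (integrating $\xi$ first turns $f^{1/n}$ into $\rho_f^{1/n}$), where $p_n := (r'+d/n)'$ is the critical Lieb--Thirring exponent behind Corollary~\ref{cor:LT_transporte}. A short computation fixes the third conjugate to $\bb_1 := nr'+d = (n+1)\bb_n$, so $\|E\|_{L^{\bb_1}}$ appears. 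I would then estimate $\|E\|_{L^{\bb_1}}\leq C\|\nabla K\|_{L^{\bb,\infty}}\|\rho_f\|_{L^s}$ by the weak Young convolution inequality with $1+1/\bb_1 = 1/\bb+1/s$, interpolate $\|\rho_f\|_{L^s}\leq M_0^{\Theta_0}\|\rho_f\|_{L^{p_n}}^{1-\Theta_0}$ between $L^1$ and $L^{p_n}$, and feed back the classical analogue of Corollary~\ref{cor:LT_transporte},
\[
	\|\rho_f\|_{L^{p_n}}\leq t^{-d/p_n'}L_n^{1-\theta}\|f\|_{L^r_{x,\xi}}^{\theta},\qquad \theta = r'/p_n',
\]
together with the conservation $\|f(t)\|_{L^r_{x,\xi}}=\|f^\init\|_{L^r_{x,\xi}}$ along the Vlasov flow.

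The assumption $\bb\geq\bb_n$ is exactly what guarantees $s\in[1,p_n]$, making the interpolation admissible; the remaining difficulty is purely arithmetic, namely checking that the powers of $L_n$, $t$, $M_0$ and $\|f^\init\|_{L^r_{x,\xi}}$ assemble to the stated exponents $1+a/n$, $-a$, $\Theta_0$ and $r'/\bb$ respectively. Using $p_n'=r'+d/n$, $\theta=nr'/(nr'+d)$ and $1-\Theta_0+1/n=(nr'+d)/(n\bb)$, each of the four exponents collapses to the announced form, so this step, though tedious, is the main bookkeeping obstacle rather than a conceptual one.
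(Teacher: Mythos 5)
Your proposal is correct and lands exactly on the stated exponents, but it reorganizes the key estimate in a way that differs from the paper. Both arguments share the same skeleton: the identity $\ddt{L_n}=-nt\iintd f\,E\cdot(x-t\xi)|x-t\xi|^{n-2}\d x\d\xi$, the weak Young (Hardy--Littlewood--Sobolev) bound on $E=\nabla K*\rho_f$ in terms of $\|\nabla K\|_{L^{\bb,\infty}}$ and an $L^s$ norm of $\rho_f$, an $L^1$--$L^{p_n}$ interpolation of that norm, and the dispersive ingredient $\|\rho_f\|_{L^{p_n}}\leq Ct^{-d/p_n'}L_n^{1-\theta}\|f^\init\|_{L^r}^{\theta}$, i.e.\ the classical analogue of Corollary~\ref{cor:LT_transporte} obtained by applying the $k=0$ kinetic interpolation to $f(t,x,\xi+x/t)$. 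Where you diverge is the treatment of the $(n-1)$-velocity moment: the paper takes a two-factor H\"older in $x$ and then invokes the kinetic interpolation inequality \eqref{eq:interp} at the intermediate order $k=n-1$ (again for the shifted density), whereas you split pointwise $|x-t\xi|^{n-1}f=(|x-t\xi|^nf)^{(n-1)/n}f^{1/n}$, H\"older first in $\xi$ and then in $x$ with exponents $nr'+d$, $n/(n-1)$ and $np_n$, producing $\|E\|_{L^{nr'+d}}\,L_n^{(n-1)/n}\,\|\rho_f\|_{L^{p_n}}^{1/n}$; this way only the $k=0$ interpolation is ever needed, which is a genuinely more elementary route, at the price of a rigid choice of exponents (the paper keeps a free pair $(p,q)$ with $1/p'+1/q'=1/\bb$). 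Your exponent bookkeeping checks out: with $\lambda=p_n'/\bb-1/n$ one gets total powers $1+a/n$ on $L_n$, $-a$ on $t$, $\Theta_0=1+\tfrac1n-\tfrac{p_n'}{\bb}$ on $M_0$ and $r'/\bb$ on $\|f^\init\|_{L^r}$, and the constraint $s\leq p_n$ is indeed equivalent to $\bb\geq\bb_n$ since $1/(nr'+d)+1/p_n'=1/\bb_n$. Two small points to tighten in the write-up: the three-factor H\"older cannot be taken jointly in $(x,\xi)$ because $E$ is independent of $\xi$ --- it must be the iterated (H\"older in $\xi$ between the two $f$-factors, then H\"older in $x$) version you allude to in parentheses; and the weak Young step needs $1<s$, i.e.\ $\bb<nr'+d$ (and $\bb<\infty$), a restriction the paper's choice of $q$ shares implicitly and which is harmless in the regime $\bb<\tfrac d2$ where the proposition is applied.
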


	\begin{demo}
		We write $f = f(t,x,\xi)$, $E = E(x)$ and we compute
		\begin{align*}
			\ddt{L_n} &= \iintd \n{x-t\xi}^n (-\xi\cdot\nabla_x f - E\cdot\nabla_\xi f) - \n{x-t\xi}^{n-2}(x-t\xi)\cdot\xi f\dd x\dd \xi
			\\
			&= -t\iintd f \n{x-t\xi}^{n-2} (x-t\xi)\cdot E\dd \xi\dd x.
		\end{align*}
		By Hölder's inequality, we deduce for $t\geq 0$ and any $p\in(1,\infty)$
		\begin{align*}
			\n{\ddt{L_n}} &\leq t \Nrm{\intd f\n{x-t\xi}^{n-1}\d \xi}{L^p_x} \Nrm{E}{L^{p'}}
			\\
			&\leq t^n \Nrm{\intd f\n{\tfrac{x}{t}-\xi}^{n-1}\d \xi}{L^p_x} \Nrm{\nabla K*\rho_f}{L^{p'}}
			\\
			&\leq C_K\, t^n \Nrm{\intd f(t,x,\xi+\tfrac{x}{t})\n{\xi}^{n-1}\d\xi}{L^p_x} \Nrm{\intd f\dd\xi}{L^q},
		\end{align*}
		where we used Hardy-Littlewood-Sobolev's inequality with $C_K = \Nrm{\nabla K}{L^{\fb,\infty}}$ and $p\in(1,\infty)$ such that
		\begin{equation}\label{eq:b2}
			\frac{1}{p'} + \frac{1}{q'} = \frac{1}{\fb}.
		\end{equation}
		Then we want to use the classical kinetic interpolation inequality which tells that for $p'_{n,k} = \(\frac{n}{k}\)'\(r' + \frac{d}{n}\)$ and $g=g(x,\xi)\geq 0$, it holds
		\begin{equation}\label{eq:interp}
			\Nrm{\intd g(x,\xi) \n{\xi}^k\d\xi}{L^{p_{n,k}}} \leq C_{d,r,n}\(\iintd g(x,\xi)\n{\xi}^n\d\xi\dd x\)^{1-r'/p'_{n,k}} \Nrm{g}{L^r_{x,\xi}}^{r'/p'_{n,k}}.
		\end{equation}
		Since
		\begin{equation*}
			\frac{1}{p'_{n,n-1}} + \frac{1}{p'_n} = \frac{1}{r'+d/n} \(1-\frac{n-1}{n} + 1\) = \frac{n+1}{nr'+d} = \frac{1}{\fb_n} \geq \frac{1}{\fb},
		\end{equation*}
		we can choose $p\leq p_{n,n-1}$ and $q\leq p_{n,0}$ verifying \eqref{eq:b2}. Take $p := p_{n,n-1}$. Then $1< q \leq p_n$ and by interpolation
		\begin{equation*}
			\Nrm{\intd f\dd\xi}{L^q} \leq M_0^{1-\frac{p'_n}{q'}} \Nrm{\intd f\dd\xi}{L^{p_n}}^{p'_n/q'}.
		\end{equation*}
		Using the above inequality and then the interpolation inequality~\eqref{eq:interp} for $k=0$ and $k=n-1$ yields
		\begin{align*}
			\n{\ddt{L_n}} &\leq C_{d,r,n}\,C_K\, t^n\, M_0^{1-\frac{p'_n}{q'}} \Nrm{\intd f(t,x,\xi+\tfrac{x}{t})\n{\xi}^{n-1}\d \xi}{L^p_x} \Nrm{\intd f\dd\xi}{L^{p_n}}^{p'_n/q'}
			\\
			&\leq C_{d,r,n}\,C_K\, t^n\, M_0^{1-\frac{r'}{\fb} + \frac{1}{n}\(1-\frac{d}{\fb}\)} \Nrm{f}{L^r_{x,\xi}}^\frac{r'}{\fb} \(\iintd f(t,x,\xi+\tfrac{x}{t}) \n{\xi}^n \d\xi\rt)^{1+\frac{1}{n} \(\frac{d}{\fb} - 1\)}.
		\end{align*}
		With $a = \frac{d}{\fb} - 1$ and by a change of variable, we get
		\begin{align*}
			\n{\ddt{L_n}} &\leq C_{d,r,n} C_K t^n M_0^{1-\frac{r'}{\fb} - \frac{a}{n}} \Nrm{f}{L^r_{x,\xi}}^\frac{r'}{\fb} \(\iintd f(t,x,\xi)\n{\xi-\tfrac{x}{t}}^n\d\xi\)^{1+\frac{a}{n}}
			\\
			&\leq C_{d,r,n} C_K t^{-a} M_0^{1-\frac{r'}{\fb} - \frac{a}{n}} \Nrm{f}{L^r_{x,\xi}}^\frac{r'}{\fb} \(\iintd f(t,x,\xi)\n{x-t\xi}^n\d \xi\)^{1+\frac{a}{n}},
		\end{align*}
		which is the expected inequality.
	\end{demo}
	
	\subsection{Boundedness of Eulerian moments}
	
	We define $\tilde{\op} := \Sx_{-1/t}(\op)$ and for $n\in 2\N$
	\begin{align*}
		\tilde{\rho}_n &:= \Diag{\opp^{n/2}\tilde{\op} \cdot\opp^{n/2}}
		\\
		l_n &:= t^n \tilde{\rho}_n.
	\end{align*}
	We also introduce the following notations for the Eulerian, velocity and space moments
	\begin{align*}
		L_n &:= \Tr{\n{x-t\opp}^n\op}
		\\	
		M_n &:= \Tr{\n{\opp}^n\op}
		\\
		N_n &:= \Tr{\n{x}^n\op},
	\end{align*}
	as well as the corresponding moments $\tilde{M}_n$ and $\tilde{N}_n$ for $\tilde{\rho}$. In particular, since we have
	\begin{align*}
		L_n &= \tr(\n{x-t\opp}^n\op) = t^n \tr(|\opp-x/t|^n\op)
		\\
		&= t^n \tr(\n{\opp}^n\tilde{\op}) = t^n \tr(\opp^{n/2}\tilde{\op}\cdot \opp^{n/2}),
	\end{align*}
	we obtain with these notations $L_n = \intd l_n$, $M_n = \intd \rho_n$ and $N_n = \intd \rho(x) \n{x}^n\d x$.
	
	\subsubsection{Long time estimate} To obtain a differential inequality which will give us the long time behavior of the solution, we first need the following time dependent interpolation inequalities.
	\begin{prop}
		Let $0\leq k\leq n$ and $p_{n,k}' := \(\frac{n}{k}\)'p_n'$ and $p\in[1,p_{n,k}]$. Then for any $\alpha \leq k$, there exists a constant $C = C_{d,r,n,k} > 0$ such that
		\begin{align}\label{eq:ineq_interp_double}
			\Nrm{\rho_k}{L^p} &\leq C\, M_\alpha^{1-\theta_{n,k,\alpha}} M_n^{\theta_{n,k,\alpha}-\frac{r'}{p'}} \Nrm{\op}{\L^r}^\frac{r'}{p'}
			\\\label{eq:ineq_interp_double_t}
			\Nrm{l_k}{L^p} &\leq C\, t^{-d/p'} L_\alpha^{1-\theta_{n,k,\alpha}} L_n^{\theta_{n,k,\alpha}-\frac{r'}{p'}} \Nrm{\op}{\L^r}^\frac{r'}{p'},
		\end{align}
		where
		\begin{align*}
			\theta_{n,k,\alpha} &= \frac{p_{n,\alpha}'}{p'} + \frac{k-\alpha}{n-\alpha}.
		\end{align*}
	\end{prop}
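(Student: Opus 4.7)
The plan is to prove \eqref{eq:ineq_interp_double} first and then obtain \eqref{eq:ineq_interp_double_t} from it by applying it to the transported operator $\tilde{\op} = \Sx_{-1/t}\op$. The proof of \eqref{eq:ineq_interp_double} combines three ingredients: the basic Lieb--Thirring inequality \eqref{eq:Lieb_Thirring_k}, a standard Lebesgue interpolation of $\rho_k$ between $L^1$ and $L^{p_{n,k}}$, and a ``log-convexity in the order'' estimate for the moments,
\begin{equation*}
M_k \leq M_\alpha^{1-\eta}\,M_n^{\eta}, \qquad \eta := \frac{k-\alpha}{n-\alpha}\in[0,1].
\end{equation*}

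The main obstacle is the last estimate, since $|\opp|$ and $\op$ do not commute and a naive operator Hölder is not available. I would handle it by reducing to a scalar Hölder inequality using positivity of $\op$: writing $\op = \sum_{j\in J} \lambda_j \ket{\psi_j}\bra{\psi_j}$ by the spectral theorem and letting $\nu_j$ denote the scalar spectral measure of the self-adjoint operator $|\opp|$ for the vector $\psi_j$, one obtains
\begin{equation*}
M_k = \sum_{j\in J}\lambda_j\langle\psi_j,|\opp|^k\psi_j\rangle = \int_0^\infty s^k\,\dd\nu(s), \qquad \nu := \sum_{j\in J}\lambda_j\nu_j,
\end{equation*}
and since $\nu$ is a nonnegative Borel measure on $[0,\infty)$, classical Hölder gives the claim at once.

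With this in hand, \eqref{eq:ineq_interp_double} is a short computation. By log-convexity of the Lebesgue norms, for $p\in[1,p_{n,k}]$ one has
\begin{equation*}
\|\rho_k\|_{L^p} \leq \|\rho_k\|_{L^1}^{1-p'_{n,k}/p'}\,\|\rho_k\|_{L^{p_{n,k}}}^{p'_{n,k}/p'},
\end{equation*}
the exponent coming from $1/p = \lambda + (1-\lambda)/p_{n,k}$. The first factor equals $M_k$ and is controlled by the log-convexity above; the second is controlled by \eqref{eq:Lieb_Thirring_k}. Matching exponents to $\theta_{n,k,\alpha}$ reduces to the algebraic identity
\begin{equation*}
(1-\eta)\,p'_{n,k} = \frac{n-k}{n-\alpha}\cdot\frac{n}{n-k}\,p'_n = p'_{n,\alpha},
\end{equation*}
after which one reads off $(1-\eta)(1-p'_{n,k}/p') = 1-\theta_{n,k,\alpha}$ and $\eta(1-p'_{n,k}/p') + p'_{n,k}/p' - r'/p' = \theta_{n,k,\alpha} - r'/p'$.

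For \eqref{eq:ineq_interp_double_t}, I would apply \eqref{eq:ineq_interp_double} to $\tilde{\op}$ and track the powers of $t$. Since $\Tx_{1/t}$ is a unitary multiplication operator, $\|\tilde{\op}\|_{\L^r} = \|\op\|_{\L^r}$; by \eqref{eq:x_move} together with the cyclicity of the trace, $\tilde{M}_k = \Tr(|\opp-x/t|^k\op) = L_k/t^k$; and $\|l_k\|_{L^p} = t^k\|\tilde{\rho}_k\|_{L^p}$. Substituting yields a prefactor $t^{k - \alpha(1-\theta_{n,k,\alpha}) - n(\theta_{n,k,\alpha}-r'/p')}$, which simplifies to $t^{-d/p'}$ using
\begin{equation*}
\theta_{n,k,\alpha}(n-\alpha) = \frac{p'_{n,\alpha}(n-\alpha)}{p'} + (k-\alpha) = \frac{nr'+d}{p'} + (k-\alpha).
\end{equation*}
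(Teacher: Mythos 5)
Your proof is correct and follows essentially the same route as the paper: the Lieb--Thirring bound \eqref{eq:Lieb_Thirring_k}, Lebesgue interpolation of $\rho_k$ between $L^1$ and $L^{p_{n,k}}$ with $\|\rho_k\|_{L^1}=M_k$, the moment interpolation $M_k\leq M_\alpha^{1-\eta}M_n^{\eta}$, and conjugation by $\Sx_{-1/t}$ with the same $t$-power bookkeeping $k-n+nr'/p'_{n,k}=-d/p'_{n,k}$. The only differences are cosmetic: you justify the moment interpolation via the scalar spectral measure of $|\opp|$ (the paper simply asserts it), and you obtain \eqref{eq:ineq_interp_double_t} by applying \eqref{eq:ineq_interp_double} to $\tilde{\op}$ rather than redoing the endpoint estimate for $l_k$ and interpolating afterwards, which is the same computation reorganized.
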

	
	\begin{demo}
		By the kinetic interpolation inequality~\eqref{eq:Lieb_Thirring_k},
		\begin{equation*}
			\Nrm{\rho_k}{L^{p_{n,k}}} \leq C_{d,r,n,k} M_n^{1-\frac{r'}{p_{n,k}'}}\Nrm{\op}{\L^r}^\frac{r'}{p_{n,k}'}.
		\end{equation*}
		Therefore, since $p\leq p_{n,k}$, by interpolation between $L^p$ spaces, we get
		\begin{align*}
			\Nrm{\rho_k}{L^p} &\leq \Nrm{\rho_k}{L^1}^{1-\theta} \Nrm{\rho_k}{L^{p_{n,k}}}^\theta
			\\
			&\leq C M_k^{1-\theta} M_n^{\theta-\frac{r'}{p'}} \Nrm{\op}{\L^r}^\frac{r'}{p'},
		\end{align*}
		where $\theta = \theta_{n,k,k} = \frac{p_{n,k}'}{p'}$ and we used the fact that $\Nrm{\rho_k}{L^1} = M_k$. It already proves Inequality~\eqref{eq:ineq_interp_double} for $k=\alpha$. Since $k\in[\alpha,n]$, we can also bound $M_k$ in the following way
		\begin{equation*}\label{eq:interp_Mk}
			M_k \leq M_\alpha^{1-\frac{k-\alpha}{n-\alpha}}M_n^\frac{k-\alpha}{n-\alpha},
		\end{equation*}
		which yields Inequality~\eqref{eq:ineq_interp_double}. To get \eqref{eq:ineq_interp_double_t}, we follow the proof of Corollary~\ref{cor:LT_transporte}. Since $\Sx$ preserves the Schatten norms, we can write
		\begin{align*}
			\Nrm{\tilde{\op}}{\L^r} = \Nrm{\op}{\L^r}.
		\end{align*}
		Hence, by replacing $\op$ by $\tilde{\op}$ in the kinetic interpolation inequality~\eqref{eq:Lieb_Thirring_k} and multiplying by $t^k$, we obtain
		\begin{align*}
			\Nrm{l_k}{L^{p_{n,k}}} = t^k \Nrm{\tilde{\rho}_k}{L^{p_{n,k}}} &\leq C t^k \(\Tr{\n{\opp}^n \Sx_{-1/t}\op}\)^{1-\frac{r'}{p_{n,k}'}} \Nrm{\Sx_{-1/t}\op}{\L^r}^\frac{r'}{p_{n,k}'}
			\\
			& \leq C t^k \(\tr(|\opp-x/t|^n\op)\)^{1-\frac{r'}{p_{n,k}'}}\Nrm{\op}{\L^r}^\frac{r'}{p_{n,k}'}
			\\
			& \leq C t^{k-n+\frac{nr'}{p_{n,k}'}} L_n^{1-\frac{r'}{p_{n,k}'}}\Nrm{\op}{\L^r}^\frac{r'}{p_{n,k}'}.
		\end{align*}
		Next we remark that
		\begin{align*}
			k-n+\frac{nr'}{p_{n,k}'} &= k-n+\(1-\frac{k}{n}\)\frac{nr'}{r'+d/n} &=  -(n-k)\(\frac{d/n}{r'+d/n}\) = -\frac{d}{p_{n,k}'},
		\end{align*}
		and we deduce Inequality~\eqref{eq:ineq_interp_double_t} again by interpolation of $L^p$ between $L^1$ and $L^{p_{n,k}}$ and by interpolation of $L_k$ between $L_\alpha$ and $L_n$.
	\end{demo}
	
	\begin{prop}[Large time estimate]\label{prop:estim_t_long}
		Let $(r,\fb)\in [1,\infty]\times[\fb_n,\infty]$, $\nabla K\in L^{\fb,\infty}$ and $\op\in L^\infty(\R_+,\L^r \cap \L^1_+)$ be a solution of the \ref{eq:Hartree} equation. Then for any $n\in 2\N$, there exists a constant $C = C_{d,r,n} > 0$ such that
		\begin{equation*}
			\n{\ddt{L_n}} \leq C \Nrm{\nabla K}{L^{\fb,\infty}} M_0^{\Theta_0}\Nrm{\op^\init}{\L^r}^\frac{r'}{\fb}\ \frac{L_n^{1+\frac{a}{n}}(t)}{t^a},
		\end{equation*}
		where $a = \frac{d}{\fb} - 1$ and $\Theta_0 = 1 - \frac{a}{n} - \frac{r'}{\fb}$.
	\end{prop}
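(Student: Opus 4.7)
The plan is to mirror the classical proof of Proposition~\ref{prop:classic_estim_t_long} in the quantum setting, starting from the identity $L_n = t^n\Tr(|\opp|^n\tilde{\op})$ with $\tilde{\op} := \Sx_{-1/t}\op$. Combining the chain rule with $i\hbar\,\partial_s\Sx_s\op = [-|x|^2/2,\Sx_s\op]$ and the identities $\Sx_{-1/t}x = x$, $\Sx_{-1/t}\opp = \opp + x/t$, $\Sx_{-1/t}V = V$, the $|x|^2/(2t^2)$-contributions arising from the $s$-derivative of $\Sx_s$ and from $\Sx_{-1/t}(|\opp|^2/2)$ exactly cancel, yielding
\begin{equation*}
	i\hbar\,\partial_t\tilde{\op} = \Bigl[\tfrac{1}{2}|\opp|^2 + \tfrac{1}{2t}(\opp\cdot x + x\cdot\opp) + V,\,\tilde{\op}\Bigr].
\end{equation*}
Now $[|\opp|^n,|\opp|^2]=0$ and the elementary identity $[|\opp|^n,x_j] = -i\hbar n\opp_j|\opp|^{n-2}$ gives $[|\opp|^n,\opp\cdot x + x\cdot\opp] = -2i\hbar n|\opp|^n$. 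Differentiating $L_n = t^n\Tr(|\opp|^n\tilde{\op})$ by the product rule, the term $nt^{n-1}\Tr(|\opp|^n\tilde{\op})$ from the derivative of $t^n$ precisely cancels the contribution of $\tfrac{1}{2t}(\opp\cdot x + x\cdot\opp)$, leaving the clean quantum analog of the classical identity
\begin{equation*}
	\ddt{L_n} = \frac{t^n}{i\hbar}\Tr\bigl([|\opp|^n,V]\tilde{\op}\bigr).
\end{equation*}

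To bound the right-hand side, I expand $|\opp|^n = (|\opp|^2)^{n/2}$ and iterate $[|\opp|^2,V] = i\hbar(\opp\cdot E + E\cdot\opp)$ with $E = -\nabla V$, which gives
\begin{equation*}
	\tfrac{1}{i\hbar}[|\opp|^n,V] = \sum_{k=0}^{n/2-1}|\opp|^{2k}(\opp\cdot E + E\cdot\opp)|\opp|^{n-2-2k}.
\end{equation*}
By cyclicity of the trace, each resulting term has the form $\int E(x)\cdot\diag\bigl(|\opp|^{n-2-2k}\tilde{\op}|\opp|^{2k}\opp\bigr)(x)\dd x$. Hölder's inequality together with the operator Cauchy--Schwarz bound $|\diag(A\tilde{\op} B)|^2\leq\diag(A\tilde{\op} A^*)\diag(B^*\tilde{\op} B)$, valid since $\tilde{\op}\geq 0$, followed by interpolation among the transported densities $\tilde{\rho}_k$, uniformly dominate every one of these terms, giving
\begin{equation*}
	\Bigl|\ddt{L_n}\Bigr| \leq C\,t\,\|E\|_{L^{p'}}\|l_{n-1}\|_{L^p}.
\end{equation*}

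Finally, Hardy--Littlewood--Sobolev's inequality yields $\|E\|_{L^{p'}}\leq\|\nabla K\|_{L^{\bb,\infty}}\|\rho\|_{L^q}$ with $1/p'+1/q' = 1/\bb$, and the assumption $\bb\geq\bb_n$ permits admissible choices $p\leq p_{n,n-1}$ and $q\leq p_{n,0}$. The transported Lieb--Thirring inequality~\eqref{eq:ineq_interp_double_t} applied to $\|l_{n-1}\|_{L^p}$ and to $\|\rho\|_{L^q} = \|l_0\|_{L^q}$ (with $\alpha=0$) bounds both factors by powers of $L_n$, $M_0 = L_0$, and $\|\op\|_{\L^r}$, with respective $t^{-d/p'}$ and $t^{-d/q'}$ time gains. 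Combining the $t$-factors gives $t\cdot t^{-d/p'-d/q'} = t^{-(d/\bb-1)} = t^{-a}$, and summing the remaining exponents (exactly as in the classical case) produces the $\|\op\|_{\L^r}$-power $r'/\bb$, the $M_0$-power $\Theta_0 = 1-a/n-r'/\bb$, and the $L_n$-power $1+a/n$. Conservation of mass and of Schatten norms along the Hartree flow then replaces $M_0$ and $\|\op\|_{\L^r}$ by their initial values and closes the estimate. The main obstacle is the second step: whereas the classical integrand is a single scalar $|y|^{n-2}y\cdot E f$, the quantum commutator produces $n/2$ operator products with asymmetric placement of $|\opp|^{2k}$ around $\opp\cdot E + E\cdot\opp$, and obtaining a uniform bound by $\|l_{n-1}\|_{L^p}$ crucially exploits positivity of $\tilde{\op}$ via the Cauchy--Schwarz inequality on operator diagonals.
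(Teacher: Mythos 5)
Your derivation of the clean identity
\begin{equation*}
	\ddt{L_n} = \frac{t^n}{i\hbar}\Tr\bigl([|\opp|^n,V]\tilde{\op}\bigr)
\end{equation*}
is correct and is a nice alternative to the paper's route (the paper writes $|x-t\opp|^n = \S_t(|x|^n)$ and uses $\Sx_{1/t}V = V$ directly, without passing through the evolution equation for $\tilde{\op}$). The classical-style bookkeeping at the end, once one \emph{has} a bound of the form $\bigl|\ddt{L_n}\bigr|\lesssim t\,\|E\|_{L^{p'}}\|l_{n-1}\|_{L^p}$, is also fine and does reproduce the exponents $\Theta_0$, $r'/\bb$, $1+a/n$ and the prefactor $t^{-a}$.

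The gap is in the second step. After pulling $E$ out by cyclicity, each term has the form $\int E_j\,\diag\bigl(|\opp|^{n-2-2k}\tilde{\op}\,|\opp|^{2k}\opp_j\bigr)$, and the operator Cauchy--Schwarz inequality $|\diag(A\tilde{\op}B)|^2 \leq \diag(A\tilde{\op}A^*)\,\diag(B^*\tilde{\op}B)$ forces the split $A=|\opp|^{n-2-2k}$, $B=|\opp|^{2k}\opp_j$, which yields the pointwise bound $\tilde{\rho}_{2(n-2-2k)}^{1/2}\,\tilde{\rho}_{4k+2}^{1/2}$. Although the two indices always \emph{average} to $n-1$, the individual index $4k+2$ ranges up to $2n-2$ as $k$ runs to $n/2-1$, and already for $n=4$ it reaches $6>n$. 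The interpolation inequality~\eqref{eq:Lieb_Thirring_k} (and hence~\eqref{eq:ineq_interp_double_t}) is only available for $\tilde\rho_k$ with $k\leq n$, because it is calibrated to $\Tr(|\opp|^n\op)$; to control $\tilde\rho_{2n-2}$ you would need moments of order $\geq 2n-2$, which are not assumed. So the claimed intermediate estimate $\lesssim t\,\|E\|_{L^{p'}}\|l_{n-1}\|_{L^p}$ does not follow from the route you describe, and this is precisely the "main obstacle" you flag at the end.

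What the paper does instead is cite \cite[Proof of Theorem~3, Step~1]{lafleche_propagation_2018}, whose conclusion is
\begin{equation*}
	\frac{1}{i\hbar}\Tr\bigl([|\opp|^n,V]\tilde{\op}\bigr) \leq C_K\,\tilde{M}_n^\frac{1}{2}\sup_{|a+b+c|=n/2-1}\|\tilde{\rho}_{2|a|}\|_{L^\alpha}^\frac{1}{2}\|\tilde{\rho}_{2|b|}\|_{L^\beta}^\frac{1}{2}\|\tilde{\rho}_{2|c|}\|_{L^\gamma}^\frac{1}{2}.
\end{equation*}
The key structural feature is that the multi-indices satisfy $|a|+|b|+|c|=n/2-1$, so every density index $2|a|,2|b|,2|c|$ is at most $n-2$; the top moment appears only through the separate factor $\tilde M_n^{1/2}$. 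This balanced four-way split (one global trace factor plus three densities with small indices) is what keeps every $\tilde\rho$-index inside $[0,n]$ so that~\eqref{eq:ineq_interp_double_t} applies. Your two-way split cannot achieve this balance because the position of $\tilde{\op}$ in the product $|\opp|^{2k}(\opp\cdot E+E\cdot\opp)|\opp|^{n-2-2k}\tilde{\op}$ fixes the Cauchy--Schwarz split, and the resulting indices are unbalanced for most $k$. To repair your argument you would need to redistribute the momentum powers before applying Cauchy--Schwarz — essentially to reprove the cited lemma — rather than decompose $|\opp|^n = (|\opp|^2)^{n/2}$ and then apply Cauchy--Schwarz termwise.
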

	
	\begin{demo}
		We first remark that by Formula~\eqref{eq:schr_semigroup_prop} and spectral theory, we deduce $\n{x-t\opp}^n = \sfS_t(\n{x}^n)$. Therefore by defining $H_0 := \tfrac{\n{\opp}^2}{2}$, by definition of $\sfS_t$
		\begin{equation*}
			i\hbar\,\partial_t(\sfS_t(\n{x}^n)) = \com{H_0,\sfS_t(\n{x}^n)} = \com{H_0,\n{x-t\opp}^n}.
		\end{equation*}
		Hence, by differentiating $L_n$ with respect to time, we obtain
		\begin{align*}
			i\hbar\,\partial_t L_n &= \Tr{\com{H_0,\n{x-t\opp}^n}\op + \n{x-t\opp}^n\com{H_0+V,\op}}
			\\
			&= \Tr{\com{H_0,\n{x-t\opp}^n}\op + \com{\n{x-t\opp}^n,H_0+V}\op}
			\\
			&= \Tr{\com{\n{x-t\opp}^n,V}\op}.
		\end{align*}
		Then we use the operator $\Sx_t$ of translation in the $x$ direction defined in \eqref{def:S_tilde}. By formulas~\eqref{eq:x_move} and spectral theory, we deduce that for any $t\in\R$, $\Sx_{t} V = V$. Therefore, we deduce
		\begin{align*}
			i\hbar\,\partial_t L_n &= t^n \Tr{\com{(|\opp-x/t|^n),V}\op}
			\\
			&= t^n \Tr{\com{\Sx_{1/t}(\n{\opp}^n),V}\op}
			\\
			&= t^n \Tr{\com{\Sx_{1/t}(\n{\opp}^n),\Sx_{1/t}(V)}\op}
			\\
			&= t^n \Tr{\Sx_{1/t}\!\(\com{\n{\opp}^n,V}\)\op}
			\\
			&= t^n \Tr{\com{\n{\opp}^n,V}\tilde{\op}}.
		\end{align*}
		As it has been proved in \cite[Equation~(38)]{lafleche_propagation_2019}, this expression can be bounded in the following way
		\begin{align*}
			\n{\Tr{\com{\n{\opp}^n,V}\tilde{\op}}} \leq C_K\,\hbar \,\tilde{M}_{n}^\frac{1}{2} \sup_{\n{a+b+c}= n/2-1} \Nrm{\tilde{\rho}_{2\n{a}}}{L^\alpha}^\frac{1}{2} \Nrm{\tilde{\rho}_{2\n{b}}}{L^\beta}^\frac{1}{2} \Nrm{\tilde{\rho}_{2\n{c}}}{L^\gamma}^\frac{1}{2},
		\end{align*}
		where $(a,b,c) \in (\N^d)^3$ are multi-indices with $\n{a} = a_1 + ... + a_d$ and
		\begin{align}\label{eq:b}
			\frac{2}{\fb} &= \frac{1}{\alpha'}+\frac{1}{\beta'}+\frac{1}{\gamma'}
			\\\nonumber
			C_K &= C_{d,n}\Nrm{\nabla K}{L^{\fb,\infty}}.
		\end{align}
		As in \cite[Proof of Theorem~3, Step~2]{lafleche_propagation_2019}, we remark that for the exponents $p_{n,k}$ defined in \eqref{def:exposants} and multi-indices such that $\n{a+b+c} = n/2-1$, we have
		\begin{align*}
			\frac{1}{p_{n,2\n{a}}'} + \frac{1}{p_{n,2\n{b}}'} + \frac{1}{p_{n,2\n{b}}'} = \frac{1}{p'_n}\(3 - \frac{2\n{a} + 2 \n{b} + 2 \n{c}}{n}\) = \frac{2\(n+1\)}{nr'+d} = \frac{2}{\fb_n}.
		\end{align*}
		Therefore, since $\fb \geq \fb_n$, we can find $(\alpha,\beta,\gamma)\in [1,p_{n,2\n{a}}]\times[1,p_{n,2\n{b}}]\times[1,p_{n,2\n{b}}]$ verifying \eqref{eq:b} and use the interpolation inequality~\eqref{eq:ineq_interp_double_t} for $\alpha = 0$. By the definition of $l_k$ and the fact that $L_0 = M_0$, we deduce
		\begin{align*}
			\partial_t L_n &\leq C_K\, t^{n-n/2-(\n{a}+\n{b}+\n{c})}\, L_n^\frac{1}{2} \sup_{\n{a+b+c}= n/2-1} \Nrm{l_{2\n{a}}}{L^\alpha}^\frac{1}{2} \Nrm{l_{2\n{b}}}{L^\beta}^\frac{1}{2} \Nrm{l_{2\n{c}}}{L^\gamma}^\frac{1}{2}
			\\
			&\leq C_K\, t \,L_n^\frac{1}{2} \sup_{\n{a+b+c}= n/2-1} \Nrm{l_{2\n{a}}}{L^\alpha}^\frac{1}{2} \Nrm{l_{2\n{b}}}{L^\beta}^\frac{1}{2} \Nrm{l_{2\n{c}}}{L^\gamma}^\frac{1}{2}
			\\
			&\leq \(C_{d,r,n} C_K M_0^{\Theta_0} \Nrm{\op}{\L^r}^{\Theta_1}\) t^{-a}\, L_n^{\Theta},
		\end{align*}
		where
		\begin{align*}
			a &= \frac{d}{2}\(\frac{1}{\alpha'}+\frac{1}{\beta'}+\frac{1}{\gamma'}\) -1 = \frac{d}{\fb} - 1
			\\
			\Theta_0 &= \frac{1}{2} \(3 - p_n'\(\frac{1}{\alpha'}+\frac{1}{\beta'}+\frac{1}{\gamma'}\) - \frac{2\n{a}+2\n{b}+2\n{c}}{n}\)
			\\&= 1 + \frac{1}{n} - \frac{p_n'}{\fb} = 1 - \frac{a}{n} - \frac{r'}{\fb}
			\\
			\Theta_1 &= \frac{r'}{2}\(\frac{1}{\alpha'} + \frac{1}{\beta'} + \frac{1}{\gamma'}\) = \frac{r'}{\fb}
			\\
			\Theta &= \frac{1}{2} \(1 + p_n'\(\frac{1}{\alpha'}+\frac{1}{\beta'}+\frac{1}{\gamma'}\) + \frac{2\n{a}+2\n{b}+2\n{c}}{n} - r'\(\frac{1}{\alpha'} + \frac{1}{\beta'} + \frac{1}{\gamma'}\)\)
			\\
			&= 1 + \frac{1}{n} \(\frac{d}{\fb} - 1\) = 1 + \frac{a}{n}.
		\end{align*}
		We conclude by recalling that $\Nrm{\op}{\L^r}=\Nrm{\op^\init}{\L^r}$ since the \ref{eq:Hartree} equation preserves the Schatten norm.
	\end{demo}
	
	\subsubsection{Short time estimate.} To prove the short time estimate, we will use the boundedness of $M_n$ and $N_n$ for short times to get the boundedness of $L_n$. To achieve this, we first need some lemmas to bound traces expressions with products of $x$ and $\opp$ by $M_n$ and $N_n$.
	\begin{lem}[Interpolation for weighted traces]\label{lem:interpolation_w}
		Let $0\leq k\leq n$ and $A=\n{\opp}$ or $A=\n{x}$. Then for any operator $\op\geq 0$ we have the following inequalities
		\begin{align}\label{eq:interpolation_w}
			\Tr{A^k\op} &\leq \Tr{A^n\op}^\frac{k}{n} \Tr{\op}^{1-\frac{k}{n}}.
		\end{align}
	\end{lem}
	
	\begin{proof}
		By our definition of the absolute value, for two positive operators we have $\n{AB}^2 = BA^2B$. Therefore, the lemma follows from Hölder's inequality for the trace and Araki-Lieb-Thirring inequality \cite{araki_inequality_1990} since 
		\begin{align*}
			\Tr{A^k\op} = \Nrm{A^\frac{k}{2}\op^\frac{1}{2}}{2}^2 \leq \Nrm{A^\frac{k}{2}\op^\frac{k}{2 n}}{\frac{2 n}{k}}^2 \Nrm{\op^{\frac{1}{2}\(1-\frac{k}{n}\)}}{\frac{2 n}{2 n-k}}^2 \leq \Nrm{A^\frac{n}{2}\op^\frac{1}{2}}{2}^\frac{2k}{n} \Tr{\op}^{1-\frac{k}{n}},
		\end{align*}
		and the right-hand side here is exactly the right-hand side of Inequality~\eqref{eq:interpolation_w}.
	\end{proof}
	
	In the more general case of mixed product of $x$ and $\opp$, for any $\ii\in\Int{1,d}^n$ we can define the set 
	\begin{equation*}
		\cZ_k^n(\ii) = \{z_{\ii_1} \dots z_{\ii_n}, \forall j\in\Int{1,n}, z_{\ii_j} = x_{\ii_j} \text{ or } z_{\ii_j} = \opp_{\ii_j}, \n{\{j,z_j = \opp_j\}} = k\}.
	\end{equation*}
	of operators $\opz$ consisting of a product of $k$ partial derivatives and $n-k$ multiplications by a coordinate of $x$, and look at the following quantities
	\begin{equation*}
		\Tr{\opz\,\op} = \Tr{z_{\ii_1} \dots z_{\ii_n}\op}.
	\end{equation*}
	We denote by $\cZ_k^n := \bigcup_{\ii\in\Int{1,d}^n} \cZ_k^n(\ii)$. Then we have the following Lemma.
	\begin{lem}\label{lem:mixed_interpolation}
		Let $n\in 2\N$, $k\in\Int{1,n-1}$ and
		\begin{align*}
			Z_k^n = \sup_{\opz\in\cZ_k^n} \n{\Tr{\opz\,\op}}.
		\end{align*}
		Then there exist an integer $\mathcal{I}\in\Int{0,n}$ and a constant $C>0$ depending only on $d$, $n$ and $k$ such that for any operator $\op\geq 0$
		\begin{align}\label{eq:Z_k_0}
			Z_k^n \leq  M_n^{\frac{k}{n}} N_n^{1-\frac{k}{n}} + C\sum_{\ii=0}^{\mathcal{I}} \(\hbar\,Z_{k-1}^{n-2}\)^{2^{-\ii}} M_n^{\(1-2^{-\ii}\)\frac{k}{n}} N_n^{\(1-2^{-\ii}\)\(1-\frac{k}{n}\)}.
		\end{align}
		Moreover, $Z^n_0 = N_n$ and $Z^n_n = M_n$. This implies the fact that that for any $\eps>0$, there exists $C_\eps>0$ such that
		\begin{align}\label{eq:Z_k_1}
			Z_k^n \leq \eps\,N_n + C_\eps \(M_n + \hbar^\frac{n}{2} M_0\).
		\end{align}
	\end{lem}
	
	\begin{remark}
		In the case $k=1$, since $Z^{n-2}_0 = N_{n-2} \leq N_n^\frac{n-2}{n} M_0^\frac{2}{n}$, this leads to the following inequalities
		\begin{align}\nonumber
			Z^n_1 &\leq  M_n^{\frac{1}{n}} N_n^{1-\frac{1}{n}} + C\, \sum_{\ii=0}^{\mathcal{I}} \hbar^{2^{-\ii}} M_0^{\frac{2^{1-\ii}}{n}} M_n^{\frac{1-2^{-\ii}}{n}} N_n^{1-\frac{1+2^{-\ii}}{n}}
			\\\label{eq:Z_k_1_bis}
			&\leq C\(M_n^{\frac{1}{n}} N_n^{1-\frac{1}{n}} + \hbar\, M_0^{\frac{2}{n}} N_n^{1-\frac{2}{n}}\)
		\end{align}
	\end{remark}
	
	\begin{remark}
		For $n\in 2\N$, by expanding $\Tr{\n{x-\opp}^n\op}$ with the formula
		\begin{align}\label{eq:expand_Ln}
			\Tr{\n{x-\opp}^{n}\op} &= \Tr{\n{x}^{n}\op} + \Tr{\n{\opp}^{n}\op} + \sum_{k=1}^{n-1} \sum_{\opz\in \cZ_k^{n}} C_\opz \tr(\opz\,\op),
		\end{align}
		using the above lemma and Young's inequality, and replacing $\hbar$ by $t\hbar$, we obtain for any $\eps>0$
		\begin{equation}\label{eq:quasi_convexity}
			\Tr{\n{x-t\opp}^{n}\op} \leq \(1+\eps\)\Tr{\n{x}^{n}\op} + C_\eps \Tr{\(\n{t\opp}^{n} + \n{\hbar t}^\frac{n}{2}\)\op}.
		\end{equation}
	\end{remark}
	
	\begin{proof}
		The idea of the proof is to reiterate commutation of operators $z_i$ and Hölder's inequality for the trace. We start with the simplest case $n=2$.
		\step{1. Case $n=2$}
		
		In this case it is sufficient to look at $\n{\Tr{x_\ii \opp_j\op}}$ for any $(i,j)\in\Int{1,d}^2$. By Hölder's and Young's inequalities, we get
		\begin{align}\label{eq:Z_k_2}
			\n{\Tr{x_\ii \opp_j\op}} &= \n{\Tr{\op^\frac{1}{2}x_\ii \opp_j\op^\frac{1}{2}}} \leq \Nrm{\op^\frac{1}{2}x_\ii}{2} \Nrm{\opp_j\op^\frac{1}{2}}{2} \leq N_2^\frac{1}{2} M_2^\frac{1}{2}.
		\end{align}
		
		\step{2. Case $n>2$}
		
		Let $\opz\in \cZ^n_k$. Since we have the following commutation relations for $j\neq k\in\Int{1,d}^2$
		\begin{align}\label{eq:commut_0}
			\com{x_j,\opp_k} &= \com{\opp_j,\opp_k} = \com{x_j,x_k} = \com{x_j,x_j} = 0
			\\\label{eq:commut_1}
			\com{x_j,\opp_j} &= i\hbar,
		\end{align}
		any commutation operation of the form \eqref{eq:commut_1} of two adjacent $z_j$ in $\Tr{\opz\,\op} = \Tr{z_{\ii_1} \dots z_{\ii_{n}}\op}$ will add a term of the form $\Tr{\tilde{\opz}\,\op}$ with $\tilde{\opz}\in\cZ^{n-2}_{k-1}$. Therefore, we will use the notation $\C_{n-2}$ in the following steps to denote a term of the form
		\begin{align*}
			\C_{n-2} = \hbar \sum_{\tilde{\opz}\in \cZ_{k-1}^{n-2}} C_{\tilde{\opz}} \Tr{\tilde{\opz}\,\op},
		\end{align*}
		for some constants $C_{\tilde{\opz}}>0$. Hence, we can write
		\begin{align*}
			\Tr{\opz\,\op} \leq \Tr{\opp^\alpha x^\beta\op} + \C_{n-2} \leq \Tr{\op^\frac{1}{2}\opp^\alpha x^\beta\op^\frac{1}{2}} + \C_{n-2}
		\end{align*}
		with $(\alpha,\beta)\in (\N^d)^2$ such that $\n{\alpha}=k$ and $\n{\beta}=n-k$, where we use the multi-index notations $\n{\alpha} = \alpha_1+\dots+\alpha_d$, $x^\beta = x_1^{\beta_1}\dots x_d^{\beta_d}$, and $\opp^\alpha = (-i\hbar)^{\n{\alpha}}\partial_{x_1}^{\alpha_1}\dots\partial_{x_1}^{\alpha_1}$.
		
		$\bullet$ If $\n{\alpha}\leq\frac{n}{2}$, then we can write $\beta = (\gamma,\delta)$ with $\n{\gamma} = \frac{n}{2}-\n{\alpha}\geq 0$ and $\n{\delta} = \frac{n}{2}$, so that by Hölder's inequality
		\begin{align*}
			\Tr{\opp^{\alpha} x^\beta\op} = \Tr{\op^\frac{1}{2}\opp^\alpha x^\gamma x^\delta\op^\frac{1}{2}} &\leq \Tr{\n{\op^\frac{1}{2}\opp^\alpha x^\gamma}^2}^\frac{1}{2} \Tr{\n{x^\delta\op^\frac{1}{2}}^2}^\frac{1}{2}
			\\
			&\leq \Tr{\opp^\alpha x^{2\gamma}\opp^\alpha\op}^\frac{1}{2} N_n^\frac{1}{2}
			\\
			&\leq \Tr{\opp^{2\alpha}x^{2\gamma}\op}^\frac{1}{2} N_n^\frac{1}{2} + \C_{n-2}^\frac{1}{2}\, N_n^\frac{1}{2}
		\end{align*}
		where we used the inequality $\sqrt{a+b}\leq \sqrt{a}+\sqrt{b}$. If $\n{\alpha}=\frac{n}{2}$, then $\n{\gamma}=0$ and $\Tr{\opp^{2\alpha}x^{2\gamma}\op} \leq M_n$. If $\n{\alpha}<\frac{n}{2}$, remark that $\Tr{\opp^{2\alpha}x^{2\gamma}\op} = \Tr{\opp^{\alpha_{(1)}}x^{\beta_{(1)}}\op}$ with $\alpha_{(1)}=2\,\alpha$ and $\beta_{(1)} = 2\gamma$, which is of the same form as the term $\Tr{x^\alpha\opp^\beta\op}$ we started from, but with $\n{\alpha_{(1)}} = 2 \n{\alpha} > \n{\alpha}$. We can continue repeating this process creating in this way a sequence $(\alpha_{(\ii)})_{\ii\in\{1,\dots,j\}}$ as long as $\n{\alpha_{(\ii-1)}}< \frac{n}{2}$, and with the property that $\n{\alpha_{(\ii)}} = 2^{\ii}\n{\alpha}$ is increasing and $\n{\beta_{(\ii)}} = n - \n{\alpha_{(\ii)}}$ is decreasing. If the last term of this sequence $\alpha_{(j)}$ verifies $\n{\alpha_{(j)}}=\frac{n}{2}$, then $\n{\beta_{(j)}}=\frac{n}{2}$ and we get
		\begin{align*}
			\Tr{\opz\,\op} &\leq \Tr{\opp^{\alpha_{(j)}}x^{\beta_{(j)}}\op}^\frac{1}{2^{j}} N_n^{1-\frac{1}{2^{j}}} + \sum_{\ii=0}^j \C_{n-2}^\frac{1}{2^\ii} N_n^{1-\frac{1}{2^\ii}}
			\\
			&\leq M_n^\frac{1}{2^{j+1}} N_n^{1-\frac{1}{2^{j+1}}} + \sum_{\ii=0}^{j+1} \C_{n-2}^\frac{1}{2^\ii} N_n^{1-\frac{1}{2^\ii}},
		\end{align*}
		where $\C_{n-2}$ depends on $\ii$.
		
		$\bullet$ If this is not the case, then we get $\n{\alpha_{(j)}}\in\(\frac{n}{2},n\)$, so we are in the case of terms of the form $\Tr{\opp^{\alpha} x^\beta\op}$ with $\n{\alpha}>\frac{n}{2}$. For such a term, we do the same reasoning as in the case $\n{\alpha}<\frac{n}{2}$ but replacing the role of $x$ and $\opp$. This gives us a sequence $(\alpha_{(\ii)},\beta_{(\ii)})_{\ii\in\Int{j,j_2}}$ with $\n{\beta_{(\ii)}}< \frac{n}{2}$ strictly increasing, $\n{\alpha_{(\ii)}}> \frac{n}{2}$ strictly decreasing and such that
		\begin{align*}
			\Tr{\opp^{\alpha_{(j)}}x^{\beta_{(j)}}\op} \leq \Tr{\opp^{\alpha_{(j_2)}}x^{\beta_{(j_2)}}\op}^\frac{1}{2^{j_2-j}} M_n^{1-\frac{1}{2^{j_2-j}}} + \sum_{\ii=1}^{j_2-j} \C_{n-2}^\frac{1}{2^\ii} M_n^{1-\frac{1}{2^\ii}}.
		\end{align*}
		One more time, the last term verifies either $\n{\alpha_{(j_2)}} = \frac{n}{2}$, either $\n{\alpha_{(j_2)}} < \frac{n}{2}$, in which case we continue to create the sequence as we did for $\ii\leq j$. 
		
		$\bullet$ If the sequence $\n{\alpha_{(\ii)}}_{\ii\in\N}$ never converges to $\frac{n}{2}$, there is a periodic orbit. More precisely, since the sequence $\(\n{\alpha_{(\ii)}}\)_{\ii\in\N}$ takes values in the finite set $\Int{1,n-1}\setminus\{\frac{n}{2}\}$, we deduce that it will come back twice at the same point, i.e. there exists $(\ii_0,\ii_1)\in\N^2$ such that $0 <\ii_0< \ii_1<n$, $\alpha_{(\ii_0)} = \alpha_{(\ii_1)}$ and 
		\begin{align}\label{eq:looping}
			\Tr{\opp^{\alpha_{(\ii_0)}}x^{\beta_{(\ii_0)}}\op} \leq \Tr{\opp^{\alpha_{(\ii_1)}}x^{\beta_{(\ii_1)}}\op}^{2^{-\ii_2}} M_n^{\theta_{\ii_2}} N_n^{\eps_{\ii_2}} + \sum_{\ii=1}^{\ii_2} \C_{n-2}^{2^{-\ii}} M_n^{\theta_{\ii}} N_n^{\eps_{\ii}},
		\end{align}
		where $\ii_2 = \ii_1-\ii_0$ and $(\theta_\ii,\eps_\ii)\in(0,1)^2$ are such that 
		\begin{equation*}
			\theta_\ii+\eps_\ii = 1 - 2^{-\ii} =: \tau_\ii.
		\end{equation*}
		From Inequality~\eqref{eq:looping}, using Young's inequality on the first term of the right-hand side and removing $2^{-\ii_2}\Tr{\opp^{\alpha_{(\ii_0)}}x^{\beta_{(\ii_0)}}\op}$ on both side yields
		\begin{align*}
			\tau_{\ii_2}\Tr{\opp^{\alpha_{(\ii_0)}}x^{\beta_{(\ii_0)}}\op} \leq \tau_{\ii_2}\, M_n^\frac{\theta_{\ii_2}}{\tau_{\ii_2}} N_n^\frac{\eps_{\ii_2}}{\tau_{\ii_2}} + \sum_{\ii=1}^{\ii_2} \C_{n-2}^{2^{-\ii}} M_n^{\theta_{\ii}} N_n^{\eps_{\ii}},
		\end{align*}
		and this leads to
		\begin{align*}
			\Tr{\opz\,\op} &\leq \Tr{\opp^{\alpha_{(\ii_0)}}x^{\beta_{(\ii_0)}}\op}^{2^{-\ii_0}} M_n^{\Theta_{\ii_0}} N_n^{\Eps_{\ii_0}} + \sum_{\ii=0}^{\ii_0} \C_{n-2}^{2^{-\ii}} M_n^{\Theta_{\ii}} N_n^{\Eps_{\ii}}
			\\
			&\leq M_n^{\Theta} N_n^{\Eps} + \sum_{\ii=0}^{\ii_1} \C_{n-2}^{2^{-\ii}} M_n^{\Theta_{\ii}} N_n^{\Eps_{\ii}},
		\end{align*}
		where $(\Theta_{\ii},\Eps_{\ii})= (\Theta_{\ii_0},\Eps_{\ii_0}) + 2^{-\ii_0}(\theta_{\ii-\ii_0},\eps_{\ii-\ii_0})$ if $\ii>\ii_0$ and $\Theta = \frac{2^{-\ii_0}\theta_{\ii_2}}{\tau_{\ii_2}}+\Theta_{\ii_0}$. In particular, since $\Theta_{\ii_1}=\Theta_{\ii_0}+2^{-\ii_0}\theta_{\ii_2}$, we deduce that
		\begin{equation*}
			\Theta = \Theta_{\ii_0} + \frac{1}{\tau_{\ii_2}} \(\Theta_{\ii_1}-\Theta_{\ii_0}\) = \Theta_{\ii_1} + \frac{2^{\ii_0}}{2^{\ii_1} - 2^{\ii_0}}\(\Theta_{\ii_1}-\Theta_{\ii_0}\) > \Theta_{\ii_1},
		\end{equation*}
		and similarly $\Eps = 1-\Theta > \Eps_{\ii_1}$.
		
		$\bullet$ In all the cases, we end up with an inequality of the form
		\begin{align}\label{eq:result_temp}
			\Tr{\opz\,\op} &\leq M_n^{\Theta} N_n^{\Eps} + \sum_{\ii=0}^{\mathcal{I}} \C_{n-2}^{2^{-\ii}} M_n^{\Theta_{\ii}} N_n^{\Eps_{\ii}},
		\end{align}
		where $(\Theta_{\ii})_{\ii\in\Int{0,\mathcal{I}}}$ and $(\Eps_{\ii})_{\ii\in\Int{0,\mathcal{I}}}$ are two increasing sequences with values in $[0,1)$ such that $\Eps_{\ii}+\Theta_{\ii} = \tau_{\ii}$, $\Theta\geq \Theta_{\mathcal{I}}$ and $\Eps \geq \Eps_{\mathcal{I}}$.
		
		\step{3. Rescaling} Replacing $\hbar$ by $\lambda\,\hbar$ for some $\lambda>0$ and dividing Formula~\eqref{eq:result_temp} by $\lambda^k$ yields
		\begin{align*}
			\Tr{\opz\,\op} \leq \lambda^{\Theta n-k} M_n^{\Theta} N_n^{1-\Theta} + \sum_{\ii=0}^{\mathcal{I}} \lambda^{\Theta_{\ii}n-\tau_{\ii}k}\,\C_{n-2}^{2^{-\ii}} M_n^{\Theta_{\ii}} N_n^{\tau_{\ii}-\Theta_{\ii}} .
		\end{align*}
		Taking $\lambda = N_n^\frac{1}{n} M_n^{-\frac{1}{n}}$, we arrive at the following formula
		\begin{align*}
			\Tr{\opz\,\op} \leq  M_n^{\frac{k}{n}} N_n^{1-\frac{k}{n}} + \sum_{\ii=0}^{\mathcal{I}} \C_{n-2}^{2^{-\ii}} M_n^{\tau_{\ii}\frac{k}{n}} N_n^{\tau_{\ii}\(1-\frac{k}{n}\)}.
		\end{align*}
		from which we deduce~\eqref{eq:Z_k_0}. Formula~\eqref{eq:Z_k_1} then follows by an induction on $n$, Lemma~\ref{lem:interpolation_w} and Young's inequality.
	\end{proof}
	
	To get a short time Eulerian moment estimate, we use \cite[Theorem~3]{lafleche_propagation_2019} which tells us that for any $n\in 2\N$ and $\fb>\max(\fb_4,\fb_n)$, there exists a time
	\begin{equation}\label{def:Tmax}
		T = T_{\Nrm{\nabla K}{L^{\fb,\infty}},\Nrm{\op^\init}{\L^r},M_0,M_n^\init,d,r,n},
	\end{equation}
	and a positive constant $m$ depending on $\nabla K$, $\Nrm{\op^\init}{\L^r}$, $M_0$, $M_n^\init$, $d$, $r$ and $n$ such that
	\begin{equation}\label{eq:borne_loc_M}
		\forall (k,t)\in [0,n]\times[0,T],\ M_k(t) \leq m.
	\end{equation}
	
	\begin{prop}[Short time estimate]\label{prop:estim_t_court}
		Assume $\hbar\in(0,1)$ and let $n\in2\N\backslash\{0\}$, $r\in[1,\infty]$,
		\begin{equation*}
			\nabla K\in L^{\fb,\infty} \text{ for } \fb\in(\max(\fb_4,\fb_n),\infty],
		\end{equation*}
		and $\op\in L^\infty([0,T],\L^r\cap \L^1_+(1+\n{x}^n+\n{\opp}^n))$ be a solution of the \ref{eq:Hartree} equation. Then for any $t\in[0,T]$ it holds
		\begin{equation*}
			L_n \leq 2^n \tr(\n{x}^n\op^\init) + C_{d,n,T}\,m\(1+\hbar^\frac{n}{2}\)t^\frac{n}{2},
		\end{equation*}
		where $T$ is given by \eqref{def:Tmax}.
	\end{prop}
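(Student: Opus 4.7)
The plan is to combine Lemma~\ref{lem:quasi_convexity} applied at fixed time $t$, the imported local bound~\eqref{eq:borne_loc_M} on kinetic moments, and a free-transport-type control of the evolving space moment $N_n(t):=\Tr(|x|^n\op(t))$ by its initial value $N_n^\init$ up to an $O(t^{n/2})$ remainder.

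First, since $n\in 2\N\setminus\{0\}$, I would apply Lemma~\ref{lem:quasi_convexity} with its parameter equal to $n/2$ to the positive operator $\op(t)$, with $\eps=1$, giving $L_n(t) \leq 2\,N_n(t) + C_n\,\Tr\!\lt((|t\opp|^n + |\hbar t|^{n/2})\op(t)\rt)$. On $[0,T]$ the bound~\eqref{eq:borne_loc_M} yields $\Tr(|t\opp|^n\op(t)) = t^n M_n(t) \leq m\,T^{n/2}\,t^{n/2}$, while conservation of mass together with $\hbar^{n/2}\leq \hbar$ (valid in the semiclassical regime $\hbar\leq 1$ since $n\geq 2$) gives $\Tr(|\hbar t|^{n/2}\op(t))\leq C_{n,T}\,\hbar\,t^{n/2}$. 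Hence the correction term is at most $C_{d,n,T}(m+\hbar)\,t^{n/2}$.

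Second, I would bound $N_n(t)$ by $N_n^\init$. Since $V$ is a multiplication operator in position, $[|x|^n,V]=0$, so $N_n$ evolves only via the kinetic Hamiltonian: $i\hbar\,\partial_t N_n = \Tr([|x|^n, H_0]\op)$, yielding $|\partial_t N_n(t)| \leq n\,|\Tr(|x|^{n-2}x\cdot\opp\,\op(t))|$. Using an operator-H\"older inequality of the form $|\Tr(|x|^{n-2}x\cdot\opp\,\op)|\leq N_n^{(n-1)/n}M_n^{1/n}$ together with $M_n\leq m$ on $[0,T]$, one obtains the differential inequality $\partial_t N_n^{1/n}\leq m^{1/n}$, which integrates to $N_n(t)^{1/n}\leq (N_n^\init)^{1/n} + m^{1/n}\,t$. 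The convexity inequality $(a+b)^n\leq 2^{n-1}(a^n+b^n)$ together with $t^n\leq T^{n/2}t^{n/2}$ then gives $N_n(t)\leq 2^{n-1}N_n^\init + 2^{n-1}m\,T^{n/2}\,t^{n/2}$. Substituting into the first step produces $L_n(t)\leq 2^n N_n^\init + C_{d,n,T}(m+\hbar)\,t^{n/2}$, as claimed.

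The main obstacle is the operator-H\"older inequality in the second step: the naive Cauchy-Schwarz in the trace only provides $|\Tr(|x|^{n-2}x\cdot\opp\,\op)|\leq N_{2n-2}^{1/2}M_2^{1/2}$, which for $n\geq 4$ would require controlling the higher spatial moment $N_{2n-2}$. Closing the argument cleanly either relies on a sharper trace inequality exploiting the non-commuting structure of $x$ and $\opp$ (in the spirit of the Lieb-Thirring estimate~\eqref{eq:Lieb_Thirring_k}) or on invoking higher-moment bounds available from the short-time theory of~\cite{lafleche_propagation_2018}.
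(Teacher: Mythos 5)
Your overall route is the same as the paper's: since $V$ is a multiplication operator, $N_n=\Tr(|x|^n\op)$ evolves only through $H_0$; one derives a differential inequality for $N_n$ on $[0,T]$ using the imported bound $M_n\le m$ from \eqref{eq:borne_loc_M}, integrates it, and converts $N_n$ into $L_n$ via Lemma~\ref{lem:quasi_convexity} (doing the conversion before or after the Gronwall step is immaterial, and discarding the $\hbar^2\Delta(|x|^n)$ part of the commutator is legitimate because $\dt N_n=n\,\mathrm{Re}\Tr(|x|^{n-2}x\cdot\opp\,\op)$; the paper keeps the non-symmetrized form, which is where the extra $\hbar M_0^{1/n}$ in its constant $C_T$ comes from). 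The issue is that the step you flag as the ``main obstacle'' is a genuine gap in your argument, and it is precisely the inequality the paper's proof rests on, namely \eqref{eq:dt_N}, i.e. $|\Tr(|x|^{n-2}x\cdot\opp\,\op)|\le N_n^{1-\frac1n}M_n^{\frac1n}$ up to constants, which the paper invokes as ``H\"older's inequality for the trace''. Your suspicion that this is not a naive application of H\"older is well founded: the abstract bound $|\Tr(AB\op)|\le\Tr(|A|^{p}\op)^{1/p}\Tr(|B|^{p'}\op)^{1/p'}$ fails in general for $p\neq2$ (already for $2\times2$ matrices with $\op$ a rank-one projection), so the specific structure of $A=|x|^{n-2}x_j$ and $B=\opp_j$ must be used. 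A repair consistent with the paper's toolkit is Cauchy--Schwarz with the balanced splitting $|x|^{n-2}x_j\opp_j=\lt(x_j|x|^{\frac{n-2}{2}}\rt)\lt(|x|^{\frac{n-2}{2}}\opp_j\rt)$, giving $|\Tr(|x|^{n-2}x_j\opp_j\op)|\le N_n^{1/2}\,\Tr(\opp_j|x|^{n-2}\opp_j\op)^{1/2}$, and then a bound on the mixed moment $\Tr(\opp_j|x|^{n-2}\opp_j\op)$ by $\eps N_n+C_\eps(M_n+\hbar^{n/2}M_0)$ exactly in the spirit of Step~2 of the proof of Lemma~\ref{lem:quasi_convexity}; alternatively one can cite the weighted trace estimates of \cite{lafleche_propagation_2018}. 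As written, however, your proof asserts the key inequality without justification.

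Two further remarks. First, your fallback suggestion of importing control of $N_{2n-2}$ from the short-time theory is not available: \eqref{eq:borne_loc_M} only provides velocity moments $M_k$, and spatial moments of order $2n-2$ are not assumed on $\op^\init$, so the Cauchy--Schwarz bound $N_{2n-2}^{1/2}M_2^{1/2}$ cannot be closed under the proposition's hypotheses. Second, note that both your estimate $(\hbar t)^{n/2}M_0\le\hbar t^{n/2}M_0$ and the final form of the statement implicitly use $\hbar\le1$ (together with $n\ge2$); this is harmless in the semiclassical regime but should be said, as in Corollary~\ref{cor:quasi_convexity}.
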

	
	\begin{demo}
		We first remark that
		\begin{align*}
			[\n{x}^n,\n{\opp}^2] &= 2i\hbar\,\nabla(\n{x}^n)\cdot\opp - (-i\hbar)^2\Delta(\n{x}^n)
			\\
			&= ni\hbar\n{x}^{n-2}\(2x\cdot\opp - i\hbar\(d+n-2\)\).
		\end{align*}
		Therefore, recalling the notation $N_n := \tr(\n{x}^n\op)$ and using the fact that $\n{x}^n$ commutes with $V(x)$, we can compute
		\begin{align*}
			\ddt{N_n} &= \frac{1}{i\hbar} \Tr{\com{\n{x}^n,\frac{\n{\opp}^2}{2}+V}\op}
			\\
			&= \frac{n}{2} \Tr{\n{x}^{n-2}\(2 x\cdot\opp-i\hbar\(d+n-2\)\)\op}
			\\
			&= n \Tr{\n{x}^{n-2} x\cdot\opp\, \op} - \frac{n\,i\hbar\(d+n-2\)}{2}\Tr{\n{x}^{n-2}\op}.
		\end{align*}
		But since $N_n$ remains a real number, its derivative is the same as the derivative of its real part and we finally obtain
		\begin{align}\label{eq:dt_N_0}
			\ddt{N_n} &= n \Tr{\n{x}^{n-2} x\cdot\opp\,\op}.
		\end{align}
		This quantity can be bounded using Inequality~\eqref{eq:Z_k_1_bis}, leading to
		\begin{align}\label{eq:dt_N}
			\ddt{N_n} &\leq C\, \hbar\,M_0^\frac{2}{n}N_n^{1-\frac{2}{n}}+ C\,M_n^\frac{1}{n}N_n^{1-\frac{1}{n}}.
		\end{align}
		Since $M_n$ is uniformly bounded on $[0,T]$ by the bound \eqref{eq:borne_loc_M}, we deduce by Gronwall's Lemma that $N_n$ is also bounded uniformly on $[0,T]$. Therefore, since $\frac{n}{2}\leq n$, we can write $N_n^\frac{n-1}{n} \leq \C_T N_n^\frac{n-2}{n}$ on $[0,T]$ with $\C_T^n = \Nrm{N_n}{L^\infty(0,T)}$ depending only on the initial conditions. Thus, Inequality~\eqref{eq:dt_N} implies $\partial_t N_n \leq C_T\,N_n^\frac{n-2}{n}$ with $C_T = \hbar\, M_0^\frac{2}{n} + C\,\C_T\, m^\frac{1}{n}$. Therefore, using once more Gronwall's Lemma
		\begin{align}\label{eq:dt_N_2}
			N_n(t) &\leq \((N_n^\init)^\frac{2}{n} + \tfrac{2}{n}\,C_T\,t\)^\frac{n}{2} \leq 2^{\frac{n}{2}-1} \(N_n^\init + (C_T\,t)^\frac{n}{2}\).
		\end{align}
		Finally, by Inequality~\eqref{eq:quasi_convexity} with $1+\eps = 2^{1/2}$, and the fact that $t\leq T$, we obtain
		\begin{align*}
			L_n &\leq 2^\frac{n}{2}\,N_n + C_n \( t^nM_n + \n{\hbar t}^\frac{n}{2}M_0 \)
			\\
			&\leq 2^n \(N_n^\init + t^\frac{n}{2}\(C_T^\frac{n}{2}+C_n\,T^\frac{n}{2}\,m  + C_n\,\hbar^\frac{n}{2}\, M_0\)\),
		\end{align*}
		which yields the result.
	\end{demo}
	
	\subsubsection{Global estimate.} To prove the first theorem, we will now combine the short time estimate, which tells that $L_n$ is not growing fast initially, with the long time estimate which works only when $L_n$ is not to large after a short time. Since by assumption $L_n$ is small initially, the combination of these estimates will give us a global bound on $L_n$.
	
	\begin{demo}[ of Theorem~\ref{th:propag_L}]
		Since $\fb<\frac{d}{2}$, we have $a := \frac{d}{\fb} - 1 >1$. Thus, by Gronwall's Lemma and Proposition~\ref{prop:estim_t_long}, for any $t>\tau>0$ we obtain
		\begin{align*}
			L_n(t)^{-a/n} &\geq L_n(\tau)^{-a/n} + \frac{1}{A} \(\frac{1}{t^{a-1}} - \frac{1}{\tau^{a-1}}\)
			\\
			&\geq L_n(\tau)^{-a/n} -\frac{1}{A\tau^{a-1}},
		\end{align*}
		where
		\begin{equation*}
			A = \(1-\frac{1}{a}\)\frac{n}{C \Nrm{\nabla K}{L^{\fb,\infty}} M_0^{\Theta_0}\Nrm{\op^\init}{\L^r}^\frac{r'}{\fb}}.
		\end{equation*}
		Combining the above inequality with Proposition~\ref{prop:estim_t_court}, we know that there exists $T$ such that for any $\tau\in (0,T]$ and $t>0$, it holds
		\begin{equation*}
			L_n(t) \leq \(\(2^n N_n^\init + C_T \tau^\frac{n}{2}\)^{-\frac{a}{n}} -\frac{1}{A\tau^{a-1}}\)^{-n/a},
		\end{equation*}
		as soon as the right-hand side is positive and with $C_T = 	C_{T,N_n^\init,M_4^\init, M_0}$. Now we choose $\tau$ and $N^\init$ so that this positivity property holds. Since $\fb > \frac{d}{3}$, we have $2 < a' = \frac{a}{a-1}$. Thus we can define $\tau_0 := \min\!\(T,(AC_T^\frac{-a}{n})^\frac{2}{2-a}\)$ and $N := 2^{-n}\(A^\frac{n}{a}\tau_0^{\frac{n}{2a'}} - C_T \tau_0^\frac{n}{4}\)$. We remark that $N \geq 0$ since
		\begin{equation*}
			\tau_0 \leq (AC_T^\frac{-a}{n})^\frac{2}{2-a} \implies C_T \tau_0^{\frac{n}{4}-\frac{n}{2a'}} \leq A^\frac{n}{a} \implies N \geq 0. 
		\end{equation*}
		Taking $\tau = \tau_0$ and $N_n^\init < N$, we obtain that
		\begin{align*}
			C_{T,N_n^\init,M_4^\init, M_0} &:= (2^n N_n^\init + C_T \tau_0^\frac{n}{4})^{-\frac{a}{n}} -\frac{1}{A\tau_0^{a-1}} 
			\\
			&> (N + C_T \tau_0)^{-\frac{a}{n}} -\frac{1}{A\tau_0^{a-1}} = 0.
		\end{align*}
		We deduce that for any $t>0$
		\begin{equation*}
			L_n(t) < C_{T,N_n^\init,M_4^\init, M_0}^{-\frac{n}{a}},
		\end{equation*}
		which proves the result.
	\end{demo}
	
	\subsection{Application to the semiclassical limit}
	
	We fix now $\op\geq 0$ a solution of the \ref{eq:Hartree} equation and we assume $\hbar\leq 1$ (or equivalently, we do not write the $\hbar$ dependence of constants that are bounded when $\hbar\to 0$). We will now use the uniform in time estimate on $L_n$ and again the differential inequality for $N_n$ to obtain bounds on $N_n$ and $M_n$.
	
	To implement this strategy, first remark that by defining $\tilde{\op} := \Sx_{-1/t}\op$ and $\tilde{M}_n:=\Tr{\n{\opp}^n\tilde{\op}}$, then as in the proof of Corollary~\ref{cor:LT_transporte}, it holds
	\begin{equation}\label{eq:tilde_Mn}
		t^n\tilde{M}_n = t^n \Tr{\n{\opp}^n\Sx_{-1/t}(\op)} = \Tr{\n{x-t\opp}^n\op} = L_n,
	\end{equation}
	while $\Tr{\n{x}^n\tilde{\op}} = N_n$ and $\Tr{\tilde{\op}} = M_0$ are unchanged by this transformation. Similarly, we also have
	\begin{equation*}
		t^nM_n = \Tr{\n{x+t\opp}^n\tilde{\op}},
	\end{equation*}
	Expanding the right-hand side of this inequality as in Equation~\eqref{eq:expand_Ln} and then using Formula~\eqref{eq:tilde_Mn}, we obtain
	\begin{equation}\label{eq:bound_Mn_Ln}
		t^n M_n \leq N_n + L_n + C\,\sum_{k=1}^{n-1} t^k \,\tilde{Z}_k^n,
	\end{equation}
	where $\tilde{Z}_k^n = \sup_{\opz\in\cZ_k^n} \n{\Tr{\opz\,\tilde\op}}$. From this inequality and the result of Theorem~\ref{th:propag_L}, we obtain the following bounds.
	\begin{prop}\label{prop:propag_MN}
		Under the hypotheses of Theorem~\ref{th:propag_L}, it holds
		\begin{align*}
			N_n &\leq N_n^\init + C \(t^{n} + t^\frac{n}{2}\)
			\\
			M_n &\leq C,
		\end{align*}
		and for any $k\in\Int{0,n}$, $\tilde{Z}_k^n \leq C\,t^{-k}\weight{t}^{n-k}$, where the constants $C>0$ involved depends on $\Nrm{\nabla K}{L^{\fb,\infty}}$, $M_0$, $M_n^\init$, $\Nrm{\op^\init}{\L^r}$, $d$, $n$ and $r$.
	\end{prop}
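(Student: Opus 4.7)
The plan is to combine the global bound $L_n \in L^\infty(\R_+)$ from Theorem~\ref{th:propag_L} with the quasi-convexity inequality from Corollary~\ref{cor:quasi_convexity} and the differential inequality \eqref{eq:dt_N} for $N_n$ obtained inside the proof of Proposition~\ref{prop:estim_t_court}, then to close a Bernoulli-type ODE on $u := N_n^{1/n}$.

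\textbf{Step 1: Extract an algebraic relation between $M_n$ and $N_n$.} Fix $\eps > 0$ small, to be chosen at the end. By Corollary~\ref{cor:quasi_convexity} applied to $\op(t)$,
\begin{equation*}
    t^n M_n(t) \leq (1+\eps)\, N_n(t) + C_\eps \bigl( L_n(t) + (\hbar t)^{n/2} M_0 \bigr).
\end{equation*}
Theorem~\ref{th:propag_L} gives $L_n \leq C$, and since $\hbar < 1$ we may absorb $(\hbar t)^{n/2}$ into $Ct^{n/2}$. Using sub-additivity of $x \mapsto x^{1/n}$, this yields, for $t \geq 1$,
\begin{equation*}
    M_n^{1/n} \leq (1+\eps)^{1/n}\, \frac{N_n^{1/n}}{t} + \frac{C}{t^{1/2}}.
\end{equation*}

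\textbf{Step 2: Reduce to a linear ODE.} From inequality \eqref{eq:dt_N} in the proof of Proposition~\ref{prop:estim_t_court} we have
\begin{equation*}
    \ddt{N_n} \leq n\, N_n^{1-1/n} \bigl( M_n^{1/n} + C\, M_0^{1/n} \bigr).
\end{equation*}
Setting $u = N_n^{1/n}$ so that $u' = \ddt{N_n}/(n N_n^{1-1/n})$, the estimate of Step~1 plugged into the ODE for $N_n$ gives, for $t \geq 1$,
\begin{equation*}
    u'(t) \leq \gamma\, \frac{u(t)}{t} + C, \qquad \gamma := (1+\eps)^{1/n} > 1.
\end{equation*}

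\textbf{Step 3: Solve and take $\eps$ small.} Multiplying by the integrating factor $t^{-\gamma}$, integrating from $t_0 = 1$, and using $\gamma > 1$ so that $\int_1^t s^{-\gamma}\,\dd s$ is uniformly bounded, we obtain
\begin{equation*}
    u(t) \leq C\bigl( u(1) + 1 \bigr)\, t^\gamma \qquad \text{for all } t \geq 1.
\end{equation*}
Since $n\gamma = n (1+\eps)^{1/n} = n + \eps'$ with $\eps' = n((1+\eps)^{1/n} - 1) \to 0$ as $\eps \to 0$, choosing $\eps$ small makes the exponent arbitrarily close to $n$. Raising to the $n$-th power gives $N_n(t) \leq C\, t^{n+\eps'}$ for $t \geq 1$. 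Plugging back into Step~1 yields $M_n(t) \leq C(1 + t^{\eps'})$ on the same range.

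\textbf{Step 4: Short times and patching.} For $t \in [0,1]$, the short-time bound \eqref{eq:borne_loc_M} borrowed from \cite[Theorem~3]{lafleche_propagation_2018} gives $M_k(t) \leq m$ for all $k \leq n$, and feeding this back into \eqref{eq:dt_N} and solving by Gronwall yields $N_n(t) \leq (N_n^{\init,1/n} + Ct)^n \leq N_n^{\init} + Ct$, which in particular provides the initial datum $u(1)$ used in Step~3 (depending on $N_n^\init$, $M_n^\init$, $M_0$, $\hbar$). Concatenating the short- and long-time regimes produces the two announced bounds, with the constants depending on the quantities listed in the statement.

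\textbf{Anticipated difficulty.} The delicate point is the careful balance in Step~1: the factor $1+\eps$ in front of $N_n$ must be kept strictly greater than $1$ (otherwise the Bernoulli substitution produces $\gamma = 1$ and the integrating factor generates a logarithm, ruining the polynomial rate), while the constant $C_\eps$ must remain finite to handle the $L_n$ and $\hbar t$ contributions. Once $\gamma > 1$ is secured, everything reduces to a one-line integration of a linear ODE, and the price paid for this trick is exactly the loss of $\eps'$ in the final exponent.
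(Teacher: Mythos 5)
Your argument is essentially the paper's own: both combine the differential inequality \eqref{eq:dt_N} with Corollary~\ref{cor:quasi_convexity} and the global bound on $L_n$ from Theorem~\ref{th:propag_L} to produce a linear Gronwall inequality with coefficient $\sim(1+\eps)\,n/t$, and both patch this onto a short-time regime controlled by \eqref{eq:borne_loc_M}; your Bernoulli substitution $u=N_n^{1/n}$ is merely a tidier way of performing the linearization that the paper carries out via the triangle inequality for $x\mapsto|x|^{1/n}$ and Young's inequality, and it yields the same exponent $n+\eps$.

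One slip should be fixed: in Step~4 you invoke \eqref{eq:borne_loc_M} on all of $[0,1]$, but that bound is only guaranteed on $[0,T]$ with $T$ given by \eqref{def:Tmax}, and nothing ensures $T\geq 1$. The remedy is to integrate the linear ODE from $\tau=T$ instead of from $1$ (this is what the paper does): for $t\geq T$ one has $C_\eps\bigl(L_n+(\hbar t)^{n/2}M_0\bigr)t^{-n}\leq C_T$, so Step~1 gives $M_n^{1/n}\leq \gamma\,u/t + C_T$ on $[T,\infty)$, and the same integration yields $u(t)\leq C_T\bigl(u(T)+1\bigr)t^{\gamma}$, with $u(T)$ controlled by your short-time Gronwall estimate $N_n(t)\leq\bigl((N_n^\init)^{1/n}+C_T t\bigr)^n$ on $[0,T]$. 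Since $T$ depends only on the quantities listed in the statement, the constants remain admissible (as in the paper, they then also involve $N_n^\init$ through $u(T)$, which is harmless for the stated conclusion). Incidentally, the ``anticipated difficulty'' is not quite the right worry: $\gamma=1$ would only produce a logarithm, which is still $O(t^{\eps})$; the true obstruction to taking $\eps=0$ is that $C_\eps\to\infty$ in Corollary~\ref{cor:quasi_convexity}.
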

	
	\begin{demo}
		Once again, we will proceed by induction on $n$.

		\step{1. Case $n=2$} In this case, conservation of energy together with the boundedness of the potential energy by $M_2$ as soon as $K\in L^{\frac{d+2\,r'}{4},\infty}+L^\infty$ implies that $M_2$ is uniformly bounded independently from $\hbar$ and $t$ (see e.g. \cite[Remark~3.1]{lafleche_propagation_2019} or \cite{lions_sur_1993}). By Sobolev's embedding, this is always true if $\nabla K\in L^{\fb_4}$.
		Then by Formula~\eqref{eq:dt_N_0} and Inequality~\eqref{eq:Z_k_2}, we deduce
		\begin{align*}
		 \ddt{N_2} = 2 \Tr{x\cdot\opp\,\op} \leq 2\,M_2^\frac{1}{2} N_2^\frac{1}{2},
		\end{align*}
		which by Gronwall's inequality yields
		\begin{align*}
			N_2(t) \leq \(N_2(0)^\frac{1}{2} + m_2 \,t\)^2,
		\end{align*}
		where $m_2 = \Nrm{M_2}{L^\infty(\R_+)}$. Then, by Formula~\eqref{eq:tilde_Mn}, we obtain
		\begin{equation*}
			\Tr{x\cdot\opp \,\tilde{\op}} \leq 2\, \tilde{M}_2^\frac{1}{2} \tilde{N}_2^\frac{1}{2} = 2\, t^{-1}\, L_2^\frac{1}{2} N_2^\frac{1}{2} \leq C\,t^{-1}\weight{t},
		\end{equation*}
		from which we easily deduce that $\tilde{Z}^2_1 \leq C\,t^{-1}\weight{t}$. Finally, $\tilde{Z}^2_0 = N_2 \leq C\weight{t}^2$ and $\tilde{Z}^2_2 = \tilde{M}_2 = L_2 \,t^{-2}$.
		
		\step{2. Case $n>2$}
		We go back to Equation~\eqref{eq:dt_N_0} which together with Inequality~\eqref{eq:Z_k_0} with $k=1$ and the fact that by our induction hypothesis $Z^{n-2}_0 = N_{n-2} \leq C \weight{t}^{n-2}$ implies that
		\begin{align}\nonumber
			\ddt{N_n} &\leq n\(M_n^{\frac{1}{n}} N_n^{1-\frac{1}{n}} + C\sum_{\ii=0}^{n} \(\hbar\weight{t}^{n-2}\)^{2^{-\ii}} M_n^{\frac{1-2^{-\ii}}{n}} N_n^{\(1-2^{-\ii}\)\(1-\frac{1}{n}\)}\)
			\\\label{eq:dt_N_3}
			&\leq n\,M_n^{\frac{1}{n}} N_n^{1-\frac{1}{n}} + C\(\weight{t}^{n-2} + \weight{t}^{\eps\(n-2\)} M_n^{\frac{1-\eps}{n}} N_n^{\(1-\eps\)\(1-\frac{1}{n}\)}\),
		\end{align}
		with $\eps = 2^{-n}$. Remark that we can replace $N_n$ by $N:=1+N_n$ in Inequality~\eqref{eq:dt_N_3}. From Theorem~\ref{th:propag_L}, we know that $L_n$ is bounded by a constant, hence by Inequality~\eqref{eq:bound_Mn_Ln}, we can bound $M_n$ by
		\begin{equation}\label{eq:bound_Mn_Ln_2}
			M_n \leq \frac{N}{t^n} +  \frac{C}{t^n} + C\,\sum_{k=1}^{n-1} t^{k-n} \,\tilde{Z}_k^n.
		\end{equation}
		To control $\tilde{Z}_k^n$, we use Inequality~\eqref{eq:Z_k_0} with $\tilde{\op}$. Together with the the induction hypothesis on $\tilde{Z}_{k-1}^{n-2}$, the fact that $\tilde{M}_n \leq C\,t^{-n}$ and the fact that $\tilde{N}_n = N_n \leq N$, we get
		\begin{equation}\label{eq:bound_tilde_Znk_N}
			\begin{aligned}
		 	\tilde{Z}_k^n &\leq C \,t^{-k} N^{1-\frac{k}{n}} + C\sum_{\ii=0}^{n} t^{2^{-\ii} - k} \weight{t}^{2^{-\ii}\(n-1-k\)}  N^{\(1-2^{-\ii}\)\(1-\frac{k}{n}\)}
		 	\\
		 	&\leq C\( t^{-k} N^{1-\frac{k}{n}} +  t^{1-k} \weight{t}^{n-1-k}  + t^{\eps - k} \weight{t}^{\eps\(n-1-k\)} N^{\(1-\eps\)\(1-\frac{k}{n}\)}\).
			\end{aligned}
		\end{equation}
		where the second inequality follows from Young's inequality for the product. Putting these inequalities back into Formula~\eqref{eq:bound_Mn_Ln_2} and using again Young's inequality, we obtain for any $t\geq \tau > 0$
		\begin{equation}\label{eq:bound_Mn_N}
			M_n \leq \frac{N}{t^n} + C_\tau \(\frac{N^{1-\frac{1}{n}}}{t^n} +  \frac{1}{t}\)
		\end{equation}
		for some constant $C_\tau$ depending on $\tau$. Together with \eqref{eq:dt_N_3} and the fact that $t\geq\tau$ and $N\geq 1$, this implies
		\begin{align}\nonumber
			\ddt{N} &\leq n\,\frac{N}{t} + C_\tau\( \frac{N^{1-\frac{1}{n^2}}}{t} + \frac{N^{1-\frac{1}{n}}}{t^\frac{1}{n}} + t^{n-2} + \frac{N^{1-\eps}}{t^{1-\eps\(n-1\)}} + \frac{N^{\(1-\eps\)\(1-\frac{1}{n}\)}}{t^{\frac{1}{n}-\eps\(n-2-\frac{1}{n}\)}}\).
		\end{align}
		or equivalently, for $\bar{N} = N\,t^{-n}$,
		\begin{align*}
			 \ddt{\bar{N}} &\leq C_\tau \(\frac{\bar{N}^{1-\frac{1}{n^2}}}{t^{1+\frac{1}{n}}} + \frac{\bar{N}^{1-\frac{1}{n}}}{t^{1+\frac{1}{n}}} + t^{-2} + \frac{\bar{N}^{1-\eps}}{t^{1+\eps}} + \frac{\bar{N}^{\(1-\eps\)\(1-\frac{1}{n}\)}}{t^{\(1+\eps\)\(1+\frac{1}{n}\)}}\) =: F(\bar{N},t)
		\end{align*}
		with $F\in C^1((0,\infty)\times[\tau,\infty))$. Taking $y(t)$ as the solution of $y'(t) = F(y(t),t)$ with initial condition $y(\tau) = \bar{N}(\tau)$, we see that $y'(t) \geq 0$, hence $y(t)\geq y(\tau)>0$. Therefore, up to multiplying by a constant depending on $\tau$, we can keep only the biggest powers of $y$ and $t$ in $F(y,t)$ if we just want to bound $y'$ by above. This leads to
		\begin{align*}
			 y'(t) &\leq C_\tau\, \frac{y(t)^{1-\eps}}{t^{1+\eps}}
		\end{align*}
		where we used the fact that $\eps^{-1} = 2^n \geq n^2 \geq n$ since $n\geq 4$. This implies that for any $t>\tau$, $y(t)^\eps \leq y(\tau)^\eps + C_\tau$ is bounded if $y(\tau)$ is bounded. However, we already know by Inequality~\eqref{eq:dt_N_2} that $N_n(t) \leq N_n^\init + C_T\,t^\frac{n}{2}$ for any $t\in[0,T]$ for some $T>0$. Therefore, taking for example $\tau = T$, we obtain that $y(\tau)= \(N_n(T)+1\)\,T^{-n}$ is bounded. Therefore, $y$ is bounded for $t\geq T$ and also $N_n$ since
		\begin{align*}
			N_n(t) &\leq N_n^\init + C_T\,t^\frac{n}{2} &\text{ if } t\leq T
			\\
			N_n(t) &= \bar{N}(t)\, t^n -1 \leq y(t)\,t^n \leq C_\tau\,t^n &\text{ if } t\geq T.
		\end{align*}
		The bound on $M_n$ is then an immediate consequence of Inequality~\eqref{eq:bound_Mn_N} for large times and the fact that $M_n$ is bounded on $[0,T]$, while the bound on $\tilde{Z}^n_k$ is a consequence of Formula~\eqref{eq:bound_tilde_Znk_N}.
	\end{demo}
	
	Actually, it is sufficient to use the condition of smallness of moments for $n=4$ to get a global propagation of higher moments as soon as $\fb_4 >\fb_n$ (which corresponds to $r>\frac{d}{d-1}$). This leads to the following proposition.
	\begin{prop}\label{prop:N4Mn}
		Under the condition of Theorem~\ref{th:regu_VP}, $M_n \in L^\infty_\loc(\R_+)$ and more precisely, there exists $c_n = c_{d,n,r} \geq 0$ and $C>0$ depending on the initial conditions such that
		\begin{equation*}
			M_n \leq C \weight{t}^{c_n}.
		\end{equation*}
	\end{prop}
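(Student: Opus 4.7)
The plan is to derive, for $n>4$, a differential inequality for $M_n$ of sublinear type and conclude by Grönwall; the required damping will come from interpolating against the already globally controlled moment $M_4$. Note first that under the hypotheses of Theorem~\ref{th:regu_VP} we have $r\geq d'$, hence $\bb_n \leq \bb_4 \leq \bb$ for every even $n\geq 4$, and the assumption $\Tr(|x|^4\op^\init)\leq \C$ is exactly the smallness hypothesis of Theorem~\ref{th:propag_L} applied with $n=4$. That theorem therefore yields $L_4 \in L^\infty(\R_+)$, and Proposition~\ref{prop:propag_MN} applied with $n=4$ gives $M_4(t) \leq C(1+t^\eps)$ for any $\eps>0$; this already settles the case $n=4$ with $c_4 = \eps$ arbitrarily small, in agreement with the remark following Theorem~\ref{th:regu_VP}.

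For $n\geq 6$ I would differentiate $M_n = \Tr(|\opp|^n\op)$ in time, use $i\hbar\,\partial_t M_n = \Tr([|\opp|^n, V]\op)$, and apply the trilinear commutator estimate of \cite[Proof of Theorem~3, Step 1]{lafleche_propagation_2018} to get
\begin{equation*}
\partial_t M_n \leq C\,\|\nabla K\|_{L^{\bb,\infty}}\, M_n^{1/2} \sup_{|a+b+c|=n/2-1} \|\rho_{2|a|}\|^{1/2}_{L^\alpha}\,\|\rho_{2|b|}\|^{1/2}_{L^\beta}\,\|\rho_{2|c|}\|^{1/2}_{L^\gamma},
\end{equation*}
under the constraint $\tfrac{1}{\alpha'}+\tfrac{1}{\beta'}+\tfrac{1}{\gamma'} = \tfrac{2}{\bb}$. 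Each factor $\|\rho_{k_i}\|_{L^{p_i}}$ (with $k_i = 2|a|,2|b|,2|c|$) is then bounded via the interpolation inequality~\eqref{eq:ineq_interp_double}: if $k_i\leq 4$ I use the index $\alpha=0$ in the Lieb--Thirring inequality of order $n=4$, producing a bound depending only on $M_0$ and $M_4$, hence bounded by $C(1+t^{\eps})$; if $k_i>4$ I use $\alpha=4$, producing the factor $M_4^{1-\theta_i} M_n^{\theta_i-r'/p'_i}$ with $\theta_i = p'_{n,4}/p'_i + (k_i-4)/(n-4)$. The exponents $p_i$ are then chosen to meet the constraint $\sum 1/p'_i = 2/\bb$; when the $M_4$-based interpolation leaves insufficient room for the factors corresponding to $k_i=0$, I would substitute the $L_4$-dispersion bound $\|\rho\|_{L^{p_4}} \leq C\,t^{-d/p'_4} L_4^{1-r'/p'_4}$ of Corollary~\ref{cor:LT_transporte}, which provides extra exponent flexibility at the price of an extra favorable time decay.

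A summation analogous to the one performed in Proposition~\ref{prop:estim_t_long} shows that the total power $\Theta$ of $M_n$ in the right-hand side satisfies, in the worst case where all $k_i\geq 4$,
\begin{equation*}
\Theta - 1 \;=\; \frac{4r'+d-5\bb}{(n-4)\bb},
\end{equation*}
which is strictly negative thanks to $\bb > \bb_4 = (4r'+d)/5$; the remaining cases, in which some $k_i\leq 4$, can only decrease the power of $M_n$ further. One therefore obtains an inequality of the form $\partial_t M_n \leq C(1+t^{\eps_1})\, M_n^\Theta$ with $\Theta<1$, and Grönwall's lemma applied to $M_n^{1-\Theta}$ yields the announced polynomial bound $M_n \leq C(1+t^{c_n})$ for a suitable exponent $c_n = c_{d,n,r,\bb}>0$. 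The main technical obstacle is the combinatorial verification that in every partition $(k_1,k_2,k_3)$ with $k_1+k_2+k_3=n-2$ there exists an admissible choice of exponents $p_i$ satisfying simultaneously the constraints $p_i\leq p_{4,k_i}$ (for $k_i\leq 4$) or $p_i\leq p_{n,k_i}$ (for $k_i>4$) together with $\sum 1/p'_i = 2/\bb$; this forces the mixed use of the $M_4$-based Lieb--Thirring estimate for the high-order factors and of the $L_4$-based dispersion estimate of Corollary~\ref{cor:LT_transporte} for the spatial density.
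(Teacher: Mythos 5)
Your opening move is the same as the paper's: from the smallness of $\Tr(|x|^4\op^\init)$, Theorem~\ref{th:propag_L} with $n=4$ gives $L_4\in L^\infty(\R_+)$, and Proposition~\ref{prop:propag_MN} yields $M_4\leq C(1+t^\eps)$ for arbitrarily small $\eps>0$. From there the two proofs diverge. The paper closes the argument by a \emph{two-step induction on $n$}: it invokes the pre-derived inequality (formula~(44) of \cite{lafleche_propagation_2018}) of the form $\ddt{M_n}\leq C\,M_{n-2}^{\Theta_0}M_n^{\Theta}$ with $\Theta=1+\tfrac{n-1}{2}(\bb_{n-2}/\bb-1)<1$, precisely because $\bb>\bb_4\geq\bb_{n-2}$; since $M_{n-2}$ is polynomially bounded by the induction hypothesis, Gr\"onwall gives polynomial growth for $M_n$. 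You instead attempt a \emph{direct} estimate on $\ddt{M_n}$ by interpolating every factor $\rho_{k_i}$ against $M_4$ and $M_n$, short-circuiting the recurrence.

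That is a genuinely different strategy, but the exponent bookkeeping does not go through as you describe, and the gap is exactly in the ``combinatorial verification'' that you flag as the main obstacle. Your closed-form computation $\Theta-1=\tfrac{4r'+d-5\bb}{(n-4)\bb}$ is algebraically correct, but it applies only to triples with all $k_i\geq4$, which requires $n\geq14$. For the crucial first step $n=6$ one has $2|a|+2|b|+2|c|=4$, so every $k_i\in\{0,2,4\}$, and your rule assigns \emph{every} factor to the order-$4$ Lieb--Thirring inequality with $\alpha=0$. This confines $p_i\leq p_{4,k_i}$, so the largest achievable value of the sum is
\begin{equation*}
\sum_i \frac{1}{p_i'}\;\leq\;\sum_i \frac{4-k_i}{4\,p_4'}\;=\;\frac{2}{p_4'},\qquad p_4'=r'+\tfrac{d}{4},
\end{equation*}
and the H\"older constraint $\sum_i 1/p_i'=2/\bb$ can then only be met when $\bb\geq r'+d/4$. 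This is incompatible with the standing assumption $\bb<d/2$ whenever $r'\geq d/4$, which includes the central example $d=3$, $r=\infty$ (there $r'+d/4=7/4>3/2>\bb$). Nor does the $L_4$-dispersion bound of Corollary~\ref{cor:LT_transporte} rescue the scheme: for $k_i=0$ it offers the \emph{same} exponent ceiling $p\leq(r'+d/4)'$ as the order-$4$ Lieb--Thirring, only with an extra favourable prefactor $t^{-d/p'}$ which does not loosen the H\"older arithmetic. To make a direct-from-$M_4$ argument work, one would have to let some of the small-$k_i$ factors use the order-$n$ Lieb--Thirring (thereby reintroducing $M_n$ into their bound) and then track the resulting mixed exponent $\Theta$ carefully; the paper avoids all of this by organizing the proof as a recurrence whose H\"older constraint is governed by $\bb_{n-2}$ rather than by $\bb_4$, and by ascending in steps of two.
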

	
	\begin{demo}[ of the proposition and of Theorem~\ref{th:regu_VP}]
		Since $\fb \geq \fb_4$ and $\fb\geq \frac{d}{3}$, we can use Proposition~\ref{prop:propag_MN} for $n=4$, and deduce
		\begin{equation*}
			M_4 \leq C,
		\end{equation*}
		for a given $C>0$. This already proves the result in the case $n=4$, so that we assume now that $n\geq 6$. Then, we use Formula~(44) from \cite{lafleche_propagation_2019}, which reads
		\begin{align}
			\ddt{M_n} &\leq C_{d,r,n}C_K \Nrm{\op^\init}{\L^r}^{\Theta_2} M_{n-2}^{\Theta_0}\, M_{n}^{\Theta},
		\end{align}
		with
		\begin{align*}
			\Theta &= 1 + \frac{n-1}{2}\(\frac{\fb_{n-2}}{\fb} - 1\).
			\\
			\Theta_0 &= (1-\eps)\(\frac{3}{2} - \frac{r'}{p'_{n-2}}\)
			\\
			\Theta_2 &= \frac{3}{2} - \Theta_1 - \Theta_0,
		\end{align*}
		where
		\begin{equation*}
			\eps = \frac{nr'+d}{(n-2)r'+3d}\(\frac{(n-2)r'+d}{\fb}-(n-2)\).
		\end{equation*}
		In particular, since $r\geq d'$, then $\fb_n$ is a non-increasing sequence and we deduce that for any $n\geq 6$, $\fb> \fb_4 \geq \fb_{n-2}$, which implies that $\Theta< 1$. We then obtain Inequality~\eqref{eq:asympt_Mn} by Gronwall's Lemma and by induction over $n\in 2\N$. From this bound, Formula~\eqref{eq:asympt_Nn} about $N_n$ can be deduced by using again Inequality~\eqref{eq:dt_N} and Gronwall's Lemma. Finally, since we know by Theorem~\ref{th:propag_L} that $L_4$ is bounded, the asymptotic behavior of $\rho$ in Formula~\eqref{eq:asympt_rho} is a consequence of Corollary~\ref{cor:LT_transporte}. The other inequalities follows from Proposition~\ref{prop:propag_MN}.
	\end{demo}
	
	\begin{demo}[ of Theorem~\ref{th:CV_VP}]
		The hypotheses of Theorem~\ref{th:regu_VP} are fulfilled with $r=\infty$, thus we deduce the existence of nonnegative constants $c$ and $\nu_M$ such that
		\begin{equation*}
			M_n \leq \nu_M \weight{t}^c.
		\end{equation*}
		Therefore, we can use \cite[Proposition~5.3]{lafleche_propagation_2019}, which tells us that for any $(n_0,n)\in (2\N)^2$ verifying $d<n_0 \leq \(1-\frac{1}{\fb}\)n + 1 - \frac{d}{\fb}$, it holds
		\begin{align}\nonumber
			c_{d,n_0}\Nrm{\rho(t)}{L^\infty} &\leq \Nrm{\op(t)}{\L^\infty(m)}
			\\\label{eq:propag_Linfty_m}
			&\leq 2^{n_0} \(\Nrm{\op^\init}{\L^\infty(m)} + C_{\op^\init} \(t+\int_0^tM_n^{1-\frac{1}{\fb}}\)^{n_0}\)
			\\\nonumber
			&\leq 2^{n_0} \(\Nrm{\op^\init}{\L^\infty(m)} + \,C_{\nu_M,\op^\init} \weight{t}^{n_0\(1+\frac{c}{\fb'}\)}\),
		\end{align}
		where $m = 1+\sum_{\ii=1}^d \opp_\ii^{n_0}$ and $C_{\op^\init} = \(C_{d,n,n_0} \Nrm{\nabla K}{L^\fb} (1+M_0)\)^{n_0} \Nrm{\op^\init}{\L^\infty}^{1+\frac{n_0}{\fb}}$.
		This proves \eqref{eq:rho_bound}. As in \cite[Section~4]{golse_schrodinger_2017}, we then define the time dependent coupling $\gam = \gam(t,z)$ with $z=(x,\xi)$ as the solution to the Cauchy problem 
		\begin{equation*}
			\partial_t\gam = \(-v\cdot\nabla_x-E\cdot\nabla_\xi\)\gam + \frac{1}{i\hbar}\com{H,\gam},
		\end{equation*}
		with initial condition $\gam^\init \in \C_\hbar(f^\init,\op^\init)$. As proved in \cite[Lemma 4.2]{golse_schrodinger_2017}, the coupling property is preserved by the dynamics, and so $\gam \in \mathcal{C}(f(t),\op(t))$. We also define with the notations of \eqref{def:Wh} the quantity
		\begin{equation*}
			\Eh = \Eh(t) := \intdd \Tr{\ch(z)\gam(z)}\d z,
		\end{equation*}
		so that by the definition~\eqref{def:Wh} of $\Wh$, we have
		\begin{align}\label{eq:Wh_Eh}
			\Wh(f(t),\op(t))^2 &\leq \Eh.
		\end{align}
		Moreover, remark that by \cite[Theorem~2.4]{golse_schrodinger_2017}, the left-hand side here is bigger or equal to $d\hbar$, so that $\Eh\geq d\hbar$. Then, as in~\cite[Proof of Proposition~6.3]{lafleche_propagation_2019}, we obtain
		\begin{align*}
			\frac{\d\Eh}{\d t} &\leq \Eh + \sqrt{2\Eh}\(C_1^2\(\Eh+d\hbar\) + C_2^2\(\Eh + \frac{1}{4}\Eh \ln(\Eh)^2\)\)^\frac{1}{2}
		\end{align*}
		with $C_1 = C_K \max\!\(\Nrm{\rho(t)}{L^\infty},\Nrm{\rho_f(t)}{L^\infty}\)^\frac{1}{2}\Nrm{\rho(t)}{L^\infty}^\frac{1}{2}$ and $C_2= C_K \Nrm{\rho(t)}{L^\infty}$ with $C_K = C_d \Nrm{\nabla K}{B^1_{1,\infty}}$. Using the facts that $\Eh\geq d\hbar$ and $C_2\leq C_1$, and dividing the inequality by $\Eh$, this leads to
		\begin{align*}
			\frac{\d\ln(\Eh)}{\d t} &\leq 1 + \(6\,C_1^2 + \frac{C_2^2}{2} \ln(\Eh)^2\)^\frac{1}{2} \leq 1 + \sqrt{6}\, C_1 + C_2 \n{\ln(\Eh)}.
		\end{align*}
		Remark that the right-hand side is a Lipschitz function of $\ln(\Eh)$. Denoting by $y(t,y_0)$ the solution of $\partial_t y = 1+\sqrt{6}\,C_1 + C_2 \n{y}$ with initial condition $y_0$, we deduce by Inequality~\eqref{eq:Wh_Eh} and comparison of ordinary differential equations that
		\begin{align*}
			\Wh(f(t),\op(t))^2 \leq \Eh(t) \leq e^{y(t,\ln(\Eh(0)))}.
		\end{align*}
		Since $y(t,\cdot)$ is a non-decreasing function, minimizing over $\Eh(0)$ yields
		\begin{align}\label{eq:Wh_vs_y}
			\Wh(f(t),\op(t))^2 \leq e^{y(t,2\ln(\Wh(f(t),\op(t))))}.
		\end{align}
		Recalling that $1+\sqrt{6}C_1$ and $C_2$ are bounded by above by a function of the form $b(t) = C^\init\bangle{t}^{n_0\(1+\frac{c}{\fb'}\)}$, we find by solving the equation $y'=b(t)\(1+\n{y}\)$ that
		\begin{align*}
			y(t) &\leq \(y(0) + 1\) e^{B(t)} - 1 &&\text{if } y(0)\geq 0
			\\
			y(t) &\leq y(0)\, e^{-B(t)} +  1 - e^{-B(t)} &&\text{if } y(0)\leq 0 \text{ and } y(t)\leq 0
			\\
			y(t) &\leq \frac{e^{B(t)}}{1-y(0)} -1 &&\text{if } y(0)\leq 0 \text{ and } y(t)\geq 0
		\end{align*}
		where $B(t) = \int_0^t b(s)\dd s$ and $C^\init = 1 + C_d\Nrm{\nabla K}{B^1_{1,\infty}} \sup_{t} \frac{\Nrm{\rho_f(t)}{L^\infty} + \Nrm{\rho(t)}{L^\infty}}{\weight{t}^{n_0(1+c/\fb')}}$ depend only on the initial conditions by Inequality~\eqref{eq:propag_Linfty_m}. Remark that in the last case, we have $0 \leq \(y(0)-1\)e^{-B(t)}+1$, thus we obtain that $y(t) \leq e^{B(t)} -1 \leq \(y(0)-1\)e^{-B(t)}+e^{B(t)}$. Therefore, we can summarize the inequalities for any values of $y(0)$ by
		\begin{align*}
			y(t) \leq y(0)\, e^{\sign(y(0)) B(t)} + e^{B(t)}.
		\end{align*}
		We conclude by combining this inequality with Formula~\eqref{eq:Wh_vs_y}.
	\end{demo}
	
\section{Acknowledgments}

	This work has been supported by Université Paris-Dauphine, PSL Research University.


\renewcommand{\bibname}{\centerline{Bibliography}}
\bibliographystyle{abbrv} 
\bibliography{Vlasov}

\begin{thebibliography}{10}

\bibitem{ambrosio_semiclassical_2011}
L.~Ambrosio, A.~Figalli, G.~Friesecke, J.~Giannoulis, and T.~Paul.
\newblock Semiclassical {Limit} of {Quantum} {Dynamics} with {Rough}
  {Potentials} and {Well}-posedness of {Transport} {Equations} with {Measure}
  {Initial} {Data}.
\newblock {\em Communications on Pure and Applied Mathematics},
  64(9):1199--1242, 2011.

\bibitem{ambrosio_passage_2010}
L.~Ambrosio, G.~Friesecke, and J.~Giannoulis.
\newblock Passage from {Quantum} to {Classical} {Molecular} {Dynamics} in the
  {Presence} of {Coulomb} {Interactions}.
\newblock {\em Communications in Partial Differential Equations},
  35(8):1490--1515, July 2010.

\bibitem{amour_classical_2013}
L.~Amour, M.~Khodja, and J.~Nourrigat.
\newblock The classical limit of the {Heisenberg} and time-dependent
  {Hartree}–{Fock} equations: the {Wick} symbol of the solution.
\newblock {\em Mathematical Research Letters}, 20(1):119--139, Jan. 2013.

\bibitem{amour_semiclassical_2013}
L.~Amour, M.~Khodja, and J.~Nourrigat.
\newblock The {Semiclassical} {Limit} of the {Time} {Dependent} {Hartree}–
  {Fock} {Equation}: the {Weyl} {Symbol} of the {Solution}.
\newblock {\em Analysis \& PDE}, 6(7):1649--1674, 2013.

\bibitem{araki_inequality_1990}
H.~Araki.
\newblock On an inequality of {Lieb} and {Thirring}.
\newblock {\em Letters in Mathematical Physics}, 19(2):167--170, 1990.

\bibitem{athanassoulis_strong_2011}
A.~Athanassoulis, T.~Paul, F.~Pezzotti, and M.~Pulvirenti.
\newblock Strong {Semiclassical} {Approximation} of {Wigner} {Functions} for
  the {Hartree} {Dynamics}.
\newblock {\em Rendiconti Lincei - Matematica e Applicazioni}, 22(4):525--552,
  2011.
\newblock arXiv: 1009.0470.

\bibitem{bach_kinetic_2016}
V.~Bach, S.~Breteaux, S.~Petrat, P.~Pickl, and T.~Tzaneteas.
\newblock Kinetic {Energy} {Estimates} for the {Accuracy} of the
  {Time}-{Dependent} {Hartree}-{Fock} {Approximation} with {Coulomb}
  {Interaction}.
\newblock {\em Journal de Mathématiques Pures et Appliquées}, 105(1):1--30,
  Jan. 2016.

\bibitem{bardos_global_1985}
C.~Bardos and P.~Degond.
\newblock Global {Existence} for the {Vlasov}-{Poisson} {Equation} in 3 {Space}
  {Variables} with {Small} {Initial} {Data}.
\newblock {\em Annales de l'Institut Henri Poincaré. Analyse Non Linéaire},
  2(2):101--118, 1985.

\bibitem{bardos_derivation_2002}
C.~Bardos, L.~Erdös, F.~Golse, N.~J. Mauser, and H.-T. Yau.
\newblock Derivation of the {Schrödinger}–{Poisson} {Equation} from the
  {Quantum} {N}-body {Problem}.
\newblock {\em Comptes Rendus Mathematique}, 334(6):515--520, Jan. 2002.

\bibitem{bardos_weak_2000}
C.~Bardos, F.~Golse, and N.~J. Mauser.
\newblock Weak {Coupling} {Limit} of the {N}-particle {Schrödinger}
  {Equation}.
\newblock {\em Methods and Applications of Analysis}, 7(2):275--294, 2000.

\bibitem{benedikter_mean-field_2016}
N.~Benedikter, V.~Jaksic, M.~Porta, C.~Saffirio, and B.~Schlein.
\newblock Mean-{Field} {Evolution} of {Fermionic} {Mixed} {States}.
\newblock {\em Communications on Pure and Applied Mathematics},
  69(12):2250--2303, 2016.

\bibitem{benedikter_hartree_2016}
N.~Benedikter, M.~Porta, C.~Saffirio, and B.~Schlein.
\newblock From the {Hartree} {Dynamics} to the {Vlasov} {Equation}.
\newblock {\em Archive for Rational Mechanics and Analysis}, 221(1):273--334,
  2016.

\bibitem{benedikter_mean-field_2014}
N.~Benedikter, M.~Porta, and B.~Schlein.
\newblock Mean-field {Evolution} of {Fermionic} {Systems}.
\newblock {\em Communications in Mathematical Physics}, 331(3):1087--1131, Nov.
  2014.

\bibitem{brezzi_three-dimensional_1991}
F.~Brezzi and P.~A. Markowich.
\newblock The {Three}-{Dimensional} {Wigner}-{Poisson} {Problem}: {Existence},
  {Uniqueness} and {Approximation}.
\newblock {\em Mathematical Methods in the Applied Sciences}, 14(1):35--61,
  Jan. 1991.

\bibitem{castella_l2_1997}
F.~Castella.
\newblock L2 {Solutions} to the {Schrödinger}–{Poisson} {System}:
  {Existence}, {Uniqueness}, {Time} {Behaviour}, and {Smoothing} {Effects}.
\newblock {\em Mathematical Models and Methods in Applied Sciences},
  7(08):1051--1083, 1997.

\bibitem{castella_propagation_1999}
F.~Castella.
\newblock Propagation of space moments in the {Vlasov}-{Poisson} {Equation} and
  further results.
\newblock {\em Annales de l'Institut Henri Poincare (C) Non Linear Analysis},
  16(4):503--533, July 1999.

\bibitem{diperna_solutions_1988}
R.~DiPerna and P.-L. Lions.
\newblock Solutions globales d'équations du type {Vlasov}-{Poisson}.
\newblock {\em Comptes Rendus des Séances de l'Académie des Sciences. Série
  I. Mathématique}, 307(12):655--658, 1988.

\bibitem{diperna_global_1988}
R.~J. DiPerna and P.-L. Lions.
\newblock Global weak solutions of kinetic equations.
\newblock {\em Rend. Sem. Mat. Univ. Politec. Torino}, 46(3):259--288, 1988.
\newblock Publisher: Citeseer.

\bibitem{dolbeault_time-dependent_2001}
J.~Dolbeault and G.~Rein.
\newblock Time-dependent rescalings and {Lyapunov} functionals for the
  {Vlasov}-{Poisson} and {Euler}-{Poisson} systems, and for related models of
  kinetic equations, fluid dynamics and quantum physics.
\newblock {\em Mathematical Models and Methods in Applied Sciences},
  11(03):407--432, Apr. 2001.

\bibitem{erdos_derivation_2001}
L.~Erdös and H.-T. Yau.
\newblock Derivation of the {Nonlinear} {Schrödinger} {Equation} from a {Many}
  {Body} {Coulomb} {System}.
\newblock {\em Advances in Theoretical and Mathematical Physics},
  5(6):1169--1205, 2001.

\bibitem{frohlich_microscopic_2011}
J.~Fröhlich and A.~Knowles.
\newblock A {Microscopic} {Derivation} of the {Time}-{Dependent}
  {Hartree}-{Fock} {Equation} with {Coulomb} {Two}-{Body} {Interaction}.
\newblock {\em Journal of Statistical Physics}, 145(1):23, Oct. 2011.
\newblock arXiv: 0810.4282.

\bibitem{frohlich_mean-field_2009}
J.~Fröhlich, A.~Knowles, and S.~Schwarz.
\newblock On the {Mean}-{Field} {Limit} of {Bosons} with {Coulomb} {Two}-{Body}
  {Interaction}.
\newblock {\em Communications in Mathematical Physics}, 288(3):1023--1059, June
  2009.
\newblock arXiv: 0805.4299.

\bibitem{gasser_semiclassical_1998}
I.~Gasser, R.~Illner, P.~A. Markowich, and C.~Schmeiser.
\newblock Semiclassical, $t \rightarrow \infty$ asymptotics and dispersive
  effects for {Hartree}-{Fock} systems.
\newblock {\em ESAIM: Mathematical Modelling and Numerical Analysis},
  32(6):699--713, 1998.

\bibitem{ginibre_class_1980}
J.~Ginibre and G.~Velo.
\newblock On a {Class} of non {Linear} {Schrödinger} {Equations} with non
  {Local} {Interaction}.
\newblock {\em Mathematische Zeitschrift}, 170(2):109--136, 1980.

\bibitem{ginibre_global_1985}
J.~Ginibre and G.~Velo.
\newblock The {Global} {Cauchy} {Problem} for the non {Linear} {Schrödinger}
  {Equation} {Revisited}.
\newblock {\em Annales de l'Institut Henri Poincare (C) Non Linear Analysis},
  2(4):309--327, July 1985.

\bibitem{golse_mean_2016}
F.~Golse, C.~Mouhot, and T.~Paul.
\newblock On the {Mean} {Field} and {Classical} {Limits} of {Quantum}
  {Mechanics}.
\newblock {\em Communications in Mathematical Physics}, 343(1):165--205, 2016.
\newblock arXiv: 1502.06143.

\bibitem{golse_schrodinger_2017}
F.~Golse and T.~Paul.
\newblock The {Schrödinger} {Equation} in the {Mean}-{Field} and
  {Semiclassical} {Regime}.
\newblock {\em Archive for Rational Mechanics and Analysis}, 223(1):57--94,
  2017.

\bibitem{golse_wave_2018}
F.~Golse and T.~Paul.
\newblock Wave {Packets} and the {Quadratic} {Monge}-{Kantorovich} {Distance}
  in {Quantum} {Mechanics}.
\newblock {\em Comptes Rendus Mathematique}, 356(2):177--197, Feb. 2018.

\bibitem{golse_empirical_2019}
F.~Golse and T.~Paul.
\newblock Empirical {Measures} and {Quantum} {Mechanics}: {Applications} to the
  {Mean}-{Field} {Limit}.
\newblock {\em Communications in Mathematical Physics}, 1711:arXiv:1711.08350,
  Mar. 2019.

\bibitem{golse_derivation_2018}
F.~Golse, T.~Paul, and M.~Pulvirenti.
\newblock On the {Derivation} of the {Hartree} {Equation} from the {N}-{Body}
  {Schrödinger} {Equation}: {Uniformity} in the {Planck} {Constant}.
\newblock {\em Journal of Functional Analysis}, 275(7):1603--1649, Oct. 2018.

\bibitem{graffi_mean-field_2003}
S.~Graffi, A.~Martinez, and M.~Pulvirenti.
\newblock Mean-{Field} {Approximation} of {Quantum} {Systems} and {Classical}
  {Limit}.
\newblock {\em Mathematical Models and Methods in Applied Sciences},
  13(01):59--73, Jan. 2003.
\newblock arXiv: math-ph/0205033.

\bibitem{gerard_homogenization_1997}
P.~Gérard, P.~A. Markowich, N.~J. Mauser, and F.~Poupaud.
\newblock Homogenization {Limits} and {Wigner} {Transforms}.
\newblock {\em Communications on Pure and Applied Mathematics}, 50(4):323--379,
  1997.

\bibitem{hayashi_smoothing_1989}
N.~Hayashi and T.~Ozawa.
\newblock Smoothing {Effect} for {Some} {Schrödinger} {Equations}.
\newblock {\em Journal of Functional Analysis}, 85(2):307--348, Aug. 1989.

\bibitem{illner_global_1994}
R.~Illner, P.~F. Zweifel, and H.~Lange.
\newblock Global {Existence}, {Uniqueness} and {Asymptotic} {Behaviour} of
  {Solutions} of the {Wigner}-{Poisson} and {Schrodinger}-{Poisson} {Systems}.
\newblock {\em Mathematical Methods in the Applied Sciences}, 17(5):349--376,
  Apr. 1994.

\bibitem{lafleche_propagation_2019}
L.~Lafleche.
\newblock Propagation of {Moments} and {Semiclassical} {Limit} from {Hartree}
  to {Vlasov} {Equation}.
\newblock {\em Journal of Statistical Physics}, 177(1):20--60, July 2019.

\bibitem{lions_sur_1993}
P.-L. Lions and T.~Paul.
\newblock Sur les mesures de {Wigner}.
\newblock {\em Revista Matemática Iberoamericana}, 9(3):553--618, 1993.

\bibitem{lions_propagation_1991}
P.-L. Lions and B.~Perthame.
\newblock Propagation of moments and regularity for the 3-dimensional
  {Vlasov}-{Poisson} system.
\newblock {\em Inventiones Mathematicae}, 105(2):415--430, 1991.

\bibitem{loeper_uniqueness_2006}
G.~Loeper.
\newblock Uniqueness of the solution to the {Vlasov}-{Poisson} system with
  bounded density.
\newblock {\em Journal de Mathématiques Pures et Appliquées}, 86(1):68--79,
  July 2006.

\bibitem{markowich_classical_1993}
P.~A. Markowich and N.~J. Mauser.
\newblock The {Classical} {Limit} of a {Self}-{Consistent} {Quantum} {Vlasov}
  {Equation}.
\newblock {\em Mathematical Models and Methods in Applied Sciences},
  3(01):109--124, Feb. 1993.

\bibitem{mitrouskas_bogoliubov_2019}
D.~Mitrouskas, S.~Petrat, and P.~Pickl.
\newblock Bogoliubov corrections and trace norm convergence for the {Hartree}
  dynamics.
\newblock {\em Reviews in Mathematical Physics}, page 1950024, Feb. 2019.
\newblock arXiv: 1609.06264.

\bibitem{pallard_moment_2012}
C.~Pallard.
\newblock Moment {Propagation} for {Weak} {Solutions} to the
  {Vlasov}–{Poisson} {System}.
\newblock {\em Communications in Partial Differential Equations},
  37(7):1273--1285, July 2012.

\bibitem{pallard_space_2014}
C.~Pallard.
\newblock Space {Moments} of the {Vlasov}-{Poisson} {System}: {Propagation} and
  {Regularity}.
\newblock {\em SIAM Journal on Mathematical Analysis}, 46(3):1754--1770, Jan.
  2014.

\bibitem{perthame_time_1996}
B.~Perthame and P.-L. Lions.
\newblock Time decay, propagation of low moments and dispersive effects for
  kinetic equations.
\newblock {\em Communications in Partial Differential Equations},
  21(3-4):801--806, Jan. 1996.

\bibitem{petrat_hartree_2017}
S.~Petrat.
\newblock Hartree {Corrections} in a {Mean}-field {Limit} for {Fermions} with
  {Coulomb} {Interaction}.
\newblock {\em Journal of Physics A: Mathematical and Theoretical},
  50(24):244004, 2017.

\bibitem{petrat_new_2016}
S.~Petrat and P.~Pickl.
\newblock A {New} {Method} and a {New} {Scaling} for {Deriving} {Fermionic}
  {Mean}-{Field} {Dynamics}.
\newblock {\em Mathematical Physics, Analysis and Geometry}, 19(1):3, Mar.
  2016.

\bibitem{pfaffelmoser_global_1992}
K.~Pfaffelmoser.
\newblock Global classical solutions of the {Vlasov}-{Poisson} system in three
  dimensions for general initial data.
\newblock {\em Journal of Differential Equations}, 95(2):281--303, Feb. 1992.

\bibitem{pickl_simple_2011}
P.~Pickl.
\newblock A {Simple} {Derivation} of {Mean} {Field} {Limits} for {Quantum}
  {Systems}.
\newblock {\em Letters in Mathematical Physics}, 97(2):151--164, Aug. 2011.

\bibitem{porta_mean_2017}
M.~Porta, S.~Rademacher, C.~Saffirio, and B.~Schlein.
\newblock Mean {Field} {Evolution} of {Fermions} with {Coulomb} {Interaction}.
\newblock {\em Journal of Statistical Physics}, 166(6):1345--1364, 2017.

\bibitem{rodnianski_quantum_2009}
I.~Rodnianski and B.~Schlein.
\newblock Quantum {Fluctuations} and {Rate} of {Convergence} {Towards} {Mean}
  {Field} {Dynamics}.
\newblock {\em Communications in Mathematical Physics}, 291(1):31--61, Oct.
  2009.
\newblock arXiv: 0711.3087.

\bibitem{saffirio_mean-field_2018}
C.~Saffirio.
\newblock Mean-{Field} {Evolution} of {Fermions} with {Singular} {Interaction}.
\newblock In D.~Cadamuro, M.~Duell, W.~Dybalski, and S.~Simonella, editors,
  {\em Macroscopic {Limits} of {Quantum} {Systems}}, Springer {Proceedings} in
  {Mathematics} \& {Statistics}, pages 81--99, Cham, 2018. Springer
  International Publishing.
\newblock arXiv: 1801.02883.

\bibitem{schaeffer_global_1991}
J.~Schaeffer.
\newblock Global existence of smooth solutions to the {Vlasov}-{Poisson} system
  in three dimensions.
\newblock {\em Communications in Partial Differential Equations},
  16(8-9):1313--1335, Jan. 1991.

\end{thebibliography}

\end{document}